\newtheorem{Theorem}{Theorem}[section]
\newtheorem{Proposition}[Theorem]{Proposition}
\newtheorem{Lemma}[Theorem]{Lemma}
\newtheorem{Remark}[Theorem]{Remark}
\newtheorem{Assumption}[Theorem]{Assumption}
\newcommand{\R}{{\mathbb R}}
\newcommand{\N}{{\mathbb N}}
\newcommand{\PP}{{\mathbb P}}
\newcommand{\E}{{\mathbb E}}
\renewcommand{\d}{{\operatorname d}}
\renewcommand{\d}{{\operatorname d}}
\newcommand{\cA}{\mathcal{A}}
\newcommand{\cM}{\mathcal{M}}
\numberwithin{equation}{section}
\title{Markov approximation for controlled Hawkes Jump-Diffusions with general kernels}
\author{Mahmoud Khabou\footnote{Imperial College London (NeST Project), United Kingdom, m.khabou@imperial.ac.uk} \quad Mehdi Talbi\footnote{Laboratoire de Probabilités, Statistiques et Modélisation, Université Paris-Cité, France, talbi@lpsm.paris}} 
\date{\today}
\begin{document}

\maketitle

\begin{abstract}
    We present a Markov approximation for jump-diffusions whose jump part consists in a Hawkes process with intensity driven by a general (possibly non-monotone) kernel. Under minimal integrability conditions, the kernel can be approximated by a linear combination of exponential functions. This implies that Hawkes jump-diffusions can be approximated with Markov jump-diffusions. We illustrate the usefulness of this approximation by applying it to a class of stochastic control problems. 
\end{abstract}
\textbf{Keywords.} Jump-diffusion stochastic differential equations, Hawkes process, Stochastic control, Markov approximation.

\vspace{3mm}
{\bf MSC2010.} 60G55, 93E20, 45D05, 60H10.
\section{Introduction and preliminary results}\label{sec:preliminaries}
\subsection{Introduction}

Many observed quantities of interest have a discontinuous component: think, for instance, of spikes in a network of neurons (\citeauthor{galves} \cite{galves}) or of the sudden drop in stock prices (\citeauthor{MERTON1976125} \cite{MERTON1976125}). This explains the popularity of jump-diffusion stochastic differential equations \textit{(SDE)} in many applications such as gene networks (\citeauthor{gene} \cite{gene}), coupled chemical reactions (\citeauthor{chemical} \cite{chemical}) and biological stochastic systems (\citeauthor{bressloff} \cite{bressloff}).\\

The popularity of standard jump-diffusion SDEs also stems from their Markov property. We refer the reader to \citeauthor{bally} \cite{bally} for results on jump-diffusions with state-dependent intensity from a Markov process perspective. This property entails that the control of such dynamics can be studied within the standard dynamic programming framework: see, for example, \citeauthor{benazzoli} \cite{benazzoli} for the mean-field stochastic control of an interbank network, \citeauthor{rajabi2025optimal} \cite{rajabi2025optimal}, \citeauthor{harrison2005stochastic} \cite{harrison2005stochastic} or \citeauthor{Costa2024.10.02.616330} \cite{Costa2024.10.02.616330} for the stochastic control of neuronal networks, and \citeauthor{dewitte} \cite{dewitte} for portfolio optimization. See also \citeauthor{oksendal2007applied} \cite{oksendal2007applied} for a general overview of this subject.\\ 

However, due to the independence of their increments, the standard jump processes (Poisson or L\'evy) fail to capture the clustering phenomenon that is observed in many time series (financial, neurological, geological etc.). This motivated Alan G.\ Hawkes to introduce the eponymous Hawkes point process \cite{hawkes}, which allows self and mutual excitation between nodes in a network. This has then been extended to allow for self and mutual inhibition by Br\'emaud and Massouli\'e \cite{bm}.\\

This prompts us to introduce Hawkes jump-diffusions, which can be informally defined as follows. First, let $W=(W_t)_{t\in \R_+}$ be a standard Brownian motion and $N= (N_t)_{t\in \R_+}$ be a counting process, \textit{i.e.} an increasing, piecewise constant process with jumps of size one. The intensity $\lambda =(\lambda_t)_{t\in \R_+}$ is a predictable process measuring the jump-rate of $N$ knowing the past, that is, 
 $$\lambda_t \d t = \mathbb P \left[N_{t+\d t}-N_t=1 \big | \mathcal F_{t-}\right],$$ 
 where $\mathcal F$ is the filtration associated with $N$ and a Brownian motion $W$. We say that $(X,\lambda)$ is a Hawkes jump-diffusion if it solves the stochastic integro-differential equation
 \begin{equation}
     \label{eq:EDS_Intro}
     \begin{cases}
          X_t &= X_0 + \int_0^t \mu(X_s) \d s + \int_0^t\sigma(X_s) \d W_s + \int_0^t\gamma (X_{s-}) \d N_s\\
         \lambda _t &=  \lambda_{\infty} + \int_{_0}^{t-} \phi(t-s) \nu (X_{s-}) \d N_s
     \end{cases}
 \end{equation}
where $\mu, \sigma$ and $\gamma$ are the drift, volatility and jump functions respectively. The function $\phi$ is called the \textit{kernel} and encodes the effect of past jumps on the future. The function $\nu$ allows this impact to also depend on the state of $X$ during past jumps. \\

A case of interest corresponds to kernels of the form $\phi(t)=\eta e^{-\beta t}$. Due to their memorylessness, Hawkes intensities with exponential kernels may be rewritten as Markov SDEs with jumps. Such dynamics have been extensively studied in the literature, including in the context of control problems, where it can be analyzed using the dynamic programming approach: see \textit{e.g.} \citeauthor{callegaro2025stochasticgordonloebmodeloptimal} \cite{callegaro2025stochasticgordonloebmodeloptimal} for applications in cybersecurity, \citeauthor{mazzoran} \cite{mazzoran} for applications in energy markets, Aït-Sahalia and Hurd \cite{ait2015portfolio} in portfolio optimization and Bensoussan, \citeauthor{bergault2024price} \cite{bergault2024price} in market making and Bensoussan and Chevalier-Roignant \cite{bensoussan2024stochastic} for a general overview of controlled Hawkes processes with exponential kernels. \\

However, the exponential kernel is too restrictive and does not capture many of the properties that characterize real world systems. For example, its monotonicity does not allow us to take into account the delayed excitation / inhibition and refractory period characteristic of neuronal networks (see \citeauthor{bielajew1987effect} \cite{bielajew1987effect}). Moreover, an empirical study on cybersecurity data by Bessy-Roland, Boumezoued and Hillairet \cite{Bessy-Roland_Boumezoued_Hillairet_2021} shows that the kernel $\phi(t)=\eta t e^{-\beta t}$ fits the data better than the exponential.
This is why we need to address the case of a general kernel, where the intensity of the Hawkes process corresponds to a stochastic Volterra-integral equation. This class of dynamics has received a strong attention over the past few years, due to their diverse applications such as in medical sciences (see \textit{e.g.} \citeauthor{schmiegel2006self} \cite{schmiegel2006self} and \citeauthor{saeedian2017memory} \cite{saeedian2017memory}) or in finance, in particular in the study of rough volatility models (see \textit{e.g.} \citeauthor{bayer2016pricing} \cite{bayer2016pricing} or \citeauthor{gatheral2018volatility} \cite{gatheral2018volatility}).  \\

The control of Volterra-type dynamics is particularly challenging: 
due to the presence of the kernel $\phi$ in the dynamics \eqref{eq:EDS_Intro}, this problem is in general path-dependent, and therefore cannot be analyzed through the standard dynamic programming approach for Markov processes. Additionally, the intensity of the Hawkes process is not even a semimartingale. This control problem therefore cannot be analyzed using the theory of path-dependent partial differential equations, such as developed by Ekren, Keller, Touzi \& Zhang \cite{ekren2014viscosity}. \\

Various approaches have been considered to study control problems involving Volterra-type dynamics. One rather popular method is to handle the problem through a maximum
principle approach, see \textit{e.g.} \citeauthor{agram2015malliavin} \cite{agram2015malliavin}, \citeauthor{agram2018new} \cite{agram2018new}, \citeauthor{lin2020controlled} \cite{lin2020controlled} and 
\citeauthor{hamaguchi2023maximum} \cite{hamaguchi2023maximum}. We also mention the recent contribution of \citeauthor{cardenas2022existence} \cite{cardenas2022existence}, who search for an optimal control in a relaxed form.   \\

A large number of papers focus on the so called \textit{lifting} approach, which roughly consists in seeing the Volterra-type diffusion as the image of an infinite dimensional Markov process by some linear form, see e.g\ \citeauthor{abi2021linear} \cite{abi2021linear}, \citeauthor{jusselin2021optimal} \cite{jusselin2021optimal},  \citeauthor{di2023lifting} \cite{di2023lifting} or \citeauthor{hamaguchi2023markovian} \cite{hamaguchi2023markovian}.   In a slightly different approach, \citeauthor{viens2019martingale} \cite{viens2019martingale} lifts the state process---typically a fractional Brownian motion---in the Banach space of continuous path, treating the `Volterra time' as a parameter. This approach has been used in several subsequent works, such as the ones of \citeauthor{wang2022path} \cite{wang2022path} and \citeauthor{wang2023linear} \cite{wang2023linear}. 
In a recent work, Possamaï and Talbi \cite{possamai2024optimal} propose to lift the Volterra dynamics in the Sobolev space on $[0,T]$, which is possible whenever the kernels are weakly differentiable. \\ 

Another approach relies on the following observation: whenever the kernel of the Volterra dynamics writes as a linear combination of exponential functions, it can be extended into a multi-dimensional Markovian system. In particular, Hawkes processes with exponential kernels may be rewritten as standard Poisson-types jump processes, whose control problems can be analyzed in a standard way. It is therefore natural to search for exponential approximations of nonexponential kernel. It is well known that, thanks to Bernstein's Theorem, completely monotone kernels are subject to such approximations, see \textit{e.g.} Coutin and Carmona \cite{carmona1998fractional}, Alfonsi and Kebaier \cite{alfonsi2024approximation}, Jusselin \cite{jusselin2021optimal} or Bayer and Breneis \cite{bayer2023markovian}.  \\

In the present paper, we use the fact that the exponential approximation holds for a larger class of nonexponential kernels, under mere integrability conditions. This allows us to approximate path-dependent Hawkes jump-diffusion processes with multidimensional Markov jump diffusions, even when the kernel non-monotonous. This approximation is particularly useful in stochastic control, as it allows to characterize the (approximated) value function with as the solution of a partial differential equation. \\

The paper is structured as follows. After the Introduction, Section \ref{sec:preliminaries} introduces our probabilistic framework and the main assumptions on the considered dynamics. Section \ref{sec:continuity} analyzes the continuity of Hawkes jump-diffusions with respect to the kernel of the intensity process, which is a crucial step towards our general Markov approximation result. Section \ref{sec:main} presents the exponential approximations for integrable kernels and the main result of this paper, which is the Markov approximation of Hawkes-jump diffusions. The approximation results provided are given with their dependence on the time horizon, which is useful for numerical applications. In Section \ref{sec:control}, we show how our approximation can be used to analyze stochastic control problems, and illustrate the usefulness of our method with an application to a portfolio optimization problem. Section \ref{sec:proofs} contains the proofs of our main results and the Appendices \ref{sec:appendix} and \ref{sec:other_proofs} present some useful lemmata and proofs of the remaining results. \\ \\
\textbf{Notations.} Given an integer $d \ge 1$ and  vectors $\boldsymbol{x}=(x^1,\cdots,x^d)$ and $\boldsymbol{y}=(y^1,\cdots,y^d)$, we denote by $\boldsymbol{xy}$ the Hadamard (or component-wise) product $\boldsymbol{xy}=(x^1 y^1,\cdots, x^d y^d)$, and by $\boldsymbol{x} \cdot \boldsymbol{y}$ the scalar product $\boldsymbol{x} \cdot \boldsymbol{y} = \sum_{i=1}^d x^i y^i$. If $E$ is a Borel set, we denote by $\mathcal{B}(E)$ the corresponding Borel algebra. If $(\Omega, \mathbb F, \mathbb P)$ is a filtered probability space, we denote by $\mathbb{H}^2$ the set of $\mathbb{F}$-adapted processes $Z$ such that $\E\left[\int_0^T | Z_t |^2 dt \right] < \infty$, where the process $Z$ takes values in a arbitrary Banach space $(E, |\cdot |)$, and $\E$ denotes the expectation under the probability measure $\mathbb P$. 


\subsection{Preliminary results}

Throughout the paper, we denote by $T > 0$ a fixed time horizon.   Let $m$ be a Borel measure on $\R$ such that $m(\R)=1$. We introduce the space of jump configurations:
$$\hat \Omega^{\Pi}=\Big\{\omega=\sum_{i=1}^n \delta_{(t_i,\theta_i,y_i)}, 0<t_1<t_2<\cdots <t_n,(\theta_i,y_i)\in \R _+ \times \R , n\in \N \cup \{+\infty\} \Big\}.$$
Let $\Omega ^{\Pi}:=\prod _{j=1}^d \hat \Omega ^{\Pi}$
We denote by $\mathbb P^\Pi$ the Poisson measure under which the counting process $\boldsymbol {\Pi}:=(\Pi^1,\cdots,\Pi^d)$ defined as
$$\Pi^i \left([0,t]\times [0,\theta]\times (-\infty,y]\right)(\omega^i):=\omega ^i\left([0,t]\times [0,\theta]\times (-\infty,y]\right),\quad (t,\theta,y)\in \R_+\times \R_+ \times \R $$
for $i=1,\cdots,d$ is a homogeneous Poisson process with intensity measure $\d t \d \theta m(\d y)$. The natural filtration associated with $\Pi$ is denoted by $\mathbb F^\Pi=\left(\mathbb F^\Pi_t\right)_{t\geq 0}$ and is defined by
$$\mathbb F_t^\Pi:=\sigma \left(\Pi(\mathcal T \times D), \mathcal T \subset \mathcal B([0,t]^d),D\in \mathcal B((\R_+\times \R)^d)\right).$$
The third component $y$ corresponds to a random mark associated to the point $(t,\theta)$. This can be seen as the random intensity of a spike in the context of a neuronal network or the amount of a claim in an insurance framework. \\
If the points hold a constant non-zero mark (say $1$ for instance), that is $m(\d t) = \delta_1(\d t)$, we drop the third component and we simply write $\Pi (\d t, \d \theta)$.\\
We let $\boldsymbol W=( \boldsymbol W_t)_{t\geq 0}$ be a standard $k-$ dimensional Brownian motion constructed on a different probability space $\Omega ^W$, associated with the probability measure $\mathbb P^W$ and the natural filtration $\mathbb F^W$. We define $\Omega = \Omega ^{\Pi} \times \Omega ^W$ and $\mathbb P:=\mathbb P^\Pi \otimes \mathbb P^W$, $\E$ to be the expectation with respect to $\mathbb P$ and $\mathbb F= \{ \mathcal{F}_t\}_{\{0 \le t \le T\}} = \mathbb F^\Pi \vee \mathbb F^W$. The conditional expectation is defined by 
$$\E_t [\cdot]=\E [\cdot |\mathbb F_t].$$
Let $(\boldsymbol{\lambda}_{\infty},\boldsymbol{\mu}, \sigma, \boldsymbol{\gamma}, \nu) : [0,T] \times \mathbb{R}^d \times \Omega \longrightarrow \R^d\times  \mathbb{R}^d \times \cM_{d, k}(\mathbb R) \times \mathbb{R}^d \times \mathbb{R}$ and $\phi: [0,T] \longrightarrow \cM _{d,d}(\R)$ be measurable functions. In this paper, we are interested in the following system of stochastic integro-differential equations:
\begin{equation}
\label{eq:EDS}
    \begin{cases}
        X^i_t &= x^i_0 + \int_0^t \mu^i(s,\boldsymbol X_s) \d s + \int_0 ^t \sum_{j=1}^k\sigma^{ij}(s,\boldsymbol X_s) \d W^j_s + \int_0^t \int_{\R_+ \times \R} y\gamma^i(s,\boldsymbol{X}_{s-}) \boldsymbol{1}_{\theta \leq \lambda^i_s} \Pi^i (\d s, \d \theta, \d y)\\
        \lambda^i_t &= \lambda^i_{\infty}(t,\boldsymbol{X}_{t-})+\psi \left(\int_0^{t-} \int_{\R_+ \times \R}\sum_{j=1}^d\phi^{ij} (t-s) b(y) \nu (s,\boldsymbol{X}_{s-})\boldsymbol{1}_{\theta \leq \lambda^j_s} \Pi^j (\d s, \d \theta ,\d y) \right)
    \end{cases}
\end{equation}
where  $\boldsymbol{x}_0=(x_0^1,\cdots,x^d_0)$ is a given initial condition and $\psi: \R \longrightarrow \R_+$ is a non-decreasing jump-rate function. \\
The functional coefficients $(\boldsymbol{\lambda}_{\infty},\boldsymbol{\mu}, \sigma, \boldsymbol{\gamma})$ have a straightforward interpretation in the Markov jump-diffusion framework with state dependent intensity (\textit{cf.} \cite{bally} and the references therein). However the kernel $\phi$ is a novelty in jump-diffusion SDEs. It encodes the effect of the jumps of node $j$ on node $i$: the presence of a jump in node $j$ at an instant $\tau$ increases the integral in the expression of $\lambda^i_t$ by $\phi^{ij}(\tau-t)$ which means that it is more (respectively less) likely to observe a jump in $X^i_t$ if $\phi^{ij}$ is positive (respectively negative). We call such phenomenon cross-excitation (respectively cross-inhibition). \\
Naturally, due the this positive feedback loop in the positive kernel case, there is a risk of ``explosion" if a mutual cross-excitation or a self-excitation has a high intensity or is too persistent in time. To avoid such a phenomenon, we introduce the stability assumption \ref{ass:stability}, which is now standard in the Hawkes process literature. \\
Before proving the existence of the processes solving the SDE \eqref{eq:EDS}, we state the following assumptions:
\begin{Assumption}
\label{ass:Lipschitz}
    For all $\varphi \in \{\mu, \sigma, \gamma, \nu\}$, the process $t \mapsto \varphi(t,x)$ is in $\mathbb{H}^2$ for all $x \in \mathbb{R}^d$, and we have:
    $$|\varphi(t,x)-\varphi(t,y)|\leq C |x-y|,$$
    for some deterministic $C>0$ indepdendent of $t \in [0,T]$. Moreoever, the baseline intensity $\lambda_\infty$ is non-negative and has a Lipschitz constant $L_\lambda$ which is strictly less than one, that is
    $$\sup_{s\in \R_+,(x,y)\in \R^2}\frac{\left | \lambda_{\infty}(s,x)-\lambda_\infty(s,y)\right |}{x-y}=L_\lambda <1.$$
    Furthermore, $\lambda_\infty$, $\phi$, and $\gamma$ are bounded and
    $\int y^2 m(\d y) < + \infty$.
\end{Assumption}
\begin{Assumption}
    \label{ass:stability}
    The jump rate $\psi$ is positive and $L-$Lipschitz, the kernel $\phi$ is in $\mathbb L^1([0,T],\d t)$ and $ \int b(y) m(\d y) < + \infty$. Moreover, $\nu$ is uniformly bounded by 1 and
    $$L\E b(Y) \mathrm{sp}\left(\|\phi\|_1\right)<1,$$
    where $\mathrm{sp}$ is the spectral radius and, for $p \ge 1$, $\|\phi\|_p=\left(\Big[\int_0^T|\phi^{ij}(s)|\d s\Big]^{1/p}\right)_{i,j=1,\cdots,d}$.
\end{Assumption}
\begin{Assumption}
    \label{ass:gronwall}
    One of these two conditions holds:
    \begin{enumerate}
        \item The jump rate $\psi$ is bounded from above.
        \item The functions $\gamma$ and $\nu$ do not depend on $x$.
    \end{enumerate}
\end{Assumption}

Even though the SDE \eqref{eq:EDS} is multivariate, in the remainder of this paper all the results are written in the univariate case to reduce clutter. 
\begin{Proposition}
\label{prop:SDE}
    Let $T>0$ and $x_0 \in \R$. If Assumptions \ref{ass:Lipschitz} and \ref{ass:gronwall} hold, then SDE \eqref{eq:EDS} has a unique solution $(X,\lambda)$ on $[0,T]$ satisfying 
    $$\E \left[\sup_{t\in [0,T]}|X_t| \right] < + \infty.$$
    Additionally, if for $p\geq 2$, $Y$ and $b(Y)$ have finite $p-$th moments, then:
    $$ \E\left[\sup_{t\in[0,T]} | X_t |^p \right]\leq +\infty.$$ 
\end{Proposition}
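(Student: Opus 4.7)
My plan is a classical Picard iteration in a suitable $\mathbb{H}^2$ space, where the two main technical points are (i) disentangling the coupling between $X$ and $\lambda$ in the contraction estimates, and (ii) handling the Volterra kernel $\phi$ via a generalized (resolvent-type) Gronwall inequality rather than the usual one.

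\medskip

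First, I define a Picard sequence $(X^{(n)}, \lambda^{(n)})_{n \ge 0}$ by setting $X^{(0)} \equiv x_0$, $\lambda^{(0)}_t \equiv \lambda_\infty(t, x_0)$, and then, given $(X^{(n)}, \lambda^{(n)})$, defining $\lambda^{(n+1)}$ via the intensity equation (using $\lambda^{(n)}$ in the thinning indicator $\mathbf{1}_{\theta \le \lambda^{(n)}_s}$), and $X^{(n+1)}$ via the first equation in \eqref{eq:EDS} with $(X^{(n)}, \lambda^{(n)})$ on the right-hand side. Since $\lambda^{(n)}$ is predictable, all stochastic integrals are well-defined. Next, using It\^{o}'s isometry for the Brownian part, the compensation formula for the Poisson integral with compensator $\lambda_s\, ds\, m(\d y)$, and the Lipschitz assumptions on $\mu, \sigma, \gamma$, I obtain an estimate of the form
\begin{equation*}
\E\Bigl[\sup_{s \le t} |X^{(n+1)}_s - X^{(n)}_s|^2 \Bigr] \le C \int_0^t \Bigl( \E|X^{(n)}_s - X^{(n-1)}_s|^2 + \E|\lambda^{(n)}_s - \lambda^{(n-1)}_s|^2 \Bigr) \d s,
\end{equation*}
where the boundedness of $\gamma$ and the finite second moment of $Y$ under Assumption~\ref{ass:Lipschitz} are used to control the jump term. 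Symmetrically, the Lipschitz property of $\psi$ (constant $L$) and $\lambda_\infty$ (constant $L_\lambda < 1$), the boundedness of $\nu$, and the integrability $\int b(y)\, m(\d y) < \infty$ yield
\begin{equation*}
\E|\lambda^{(n+1)}_t - \lambda^{(n)}_t|^2 \le 2 L_\lambda^2\, \E|X^{(n+1)}_t - X^{(n)}_t|^2 + C' L^2 (\E b(Y))^2 \int_0^t |\phi(t-s)|\, \E|\lambda^{(n)}_s - \lambda^{(n-1)}_s|^2 \d s.
\end{equation*}

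\medskip

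Plugging the first estimate into the second produces a Volterra-type inequality
$u_{n+1}(t) \le \int_0^t K(t-s)\, u_n(s)\, \d s + \text{(lower-order)}$,
where $u_n(t) := \E[\sup_{s \le t} |X^{(n+1)}_s - X^{(n)}_s|^2] + \E|\lambda^{(n+1)}_t - \lambda^{(n)}_t|^2$ and $K$ is expressible in terms of $|\phi|$ and $L\,\E b(Y)$. Iterating this inequality $n$ times gives a bound involving the $n$-fold convolution of $K$; the stability condition $L\, \E b(Y)\, \mathrm{sp}(\|\phi\|_1) < 1$ of Assumption~\ref{ass:stability} is exactly what is needed for the resolvent kernel to be summable, so that $\sum_n \sup_{t \le T} u_n(t) < \infty$. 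This makes $(X^{(n)}, \lambda^{(n)})$ Cauchy in $\mathbb{H}^2$, and the closedness of stochastic integrals allows to pass to the limit, giving a solution to \eqref{eq:EDS}. Uniqueness follows by applying the very same estimates to the difference of two solutions.

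\medskip

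For the moment bound $\E[\sup_{t \le T}|X_t|^p]$, I use BDG and Kunita's inequality on the limiting equation to control the diffusion and jump parts by quantities like $\int_0^T \E[(1 + |X_s|^p)(1 + \lambda_s^{p/2})] \d s$. Here is where Assumption~\ref{ass:gronwall} becomes essential: under condition (1), $\lambda$ is bounded deterministically by $\|\psi\|_\infty + \|\lambda_\infty\|_\infty$, and a standard Gronwall argument closes the bound for $X$. Under condition (2), since $\gamma$ and $\nu$ do not depend on $x$, the equation for $\lambda$ decouples from $X$ as a self-contained Volterra SDE; taking expectation, one bounds $\sup_t \E \lambda_t^p$ first by applying the generalized Gronwall to the resulting Volterra inequality (again using Assumption~\ref{ass:stability}), and then $X$ moments follow.

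\medskip

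\textbf{Main obstacle.} The hardest part is the interplay between the Volterra memory kernel $\phi$ and the coupled thinning: the contraction cannot be closed by a naive pointwise Gronwall since $\phi$ is neither monotone nor sign-constant, and the circular dependence $X \to \lambda \to X$ forces the convolution with $\phi$ to appear explicitly. Controlling it boils down to replacing the classical exponential Gronwall argument by a resolvent-based Volterra Gronwall inequality, which is made possible exactly by the spectral-radius condition in Assumption~\ref{ass:stability}. The moment bound under Assumption~\ref{ass:gronwall}(2) is delicate for the same reason and requires handling the intensity equation first.
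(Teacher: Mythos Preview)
Your $\mathbb{H}^2$ Picard scheme has a genuine gap at the level of the contraction estimate, and it stems from a structural feature of the thinning construction that you gloss over. When you compare two indicators $\mathbf{1}_{\theta\le\lambda_s}$ and $\mathbf{1}_{\theta\le\lambda'_s}$, their difference is $\{0,1\}$-valued, so squaring does nothing: $|\mathbf{1}_{\theta\le\lambda_s}-\mathbf{1}_{\theta\le\lambda'_s}|^2=|\mathbf{1}_{\theta\le\lambda_s}-\mathbf{1}_{\theta\le\lambda'_s}|$. After integrating in $\theta$ this produces $|\lambda_s-\lambda'_s|$ to the \emph{first} power, not the second. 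Concretely, both the BDG estimate for the jump part of $X^{(n+1)}-X^{(n)}$ and the second-moment estimate for $\lambda^{(n+1)}-\lambda^{(n)}$ unavoidably contain a term of the form $C\int_0^t \E|\lambda^{(n)}_s-\lambda^{(n-1)}_s|\,\d s$ (first power), not $\E|\lambda^{(n)}_s-\lambda^{(n-1)}_s|^2$ as you write. You cannot close an $\mathbb{L}^2$ iteration with this term: bounding it by Cauchy--Schwarz introduces a square root, and the resulting inequality is no longer of Gr\"onwall/resolvent type. This is precisely the ``$\mathbb{L}^1$ character of the Poisson randomness'' that the paper flags in Section~\ref{sec:continuity}, and it is the reason the paper runs the fixed-point argument in the $\mathbb{L}^1$ norm $\E[\sup_{t\le T}|X_t|]+\E[\sup_{t\le T}|\lambda_t|]$ rather than in $\mathbb{H}^2$.

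A second issue is that you invoke Assumption~\ref{ass:stability} (the spectral-radius condition) to make the Volterra resolvent summable, but the proposition only assumes \ref{ass:Lipschitz} and \ref{ass:gronwall}. The paper's proof does not use \ref{ass:stability} at all for existence/uniqueness: it exploits instead that $\phi$ is \emph{bounded} (part of Assumption~\ref{ass:Lipschitz}), so on a short interval $[0,T]$ the map $(X,\lambda)\mapsto(Z,\kappa)$ is an $\mathbb{L}^1$ contraction---here the hypothesis $L_\lambda<1$ is what guarantees contractivity of the $\lambda_\infty$ contribution even as $T\downarrow0$. One then patches together short intervals. Your resolvent-Gronwall idea is natural, but as written it relies on an assumption you do not have; and even if you rescued it via a small-time argument, the $\mathbb{L}^1/\mathbb{L}^2$ mismatch above would still block the $\mathbb{H}^2$ route. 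The moment-bound part of your proposal is essentially correct in spirit and matches the paper's use of localization, BDG, and Lemma~\ref{lmm:higher_moments_lambda} (note that under Assumption~\ref{ass:gronwall}(2) the intensity does not fully ``decouple'' from $X$ since $\lambda_\infty$ may still depend on $X_{t-}$, but boundedness of $\lambda_\infty$ makes this harmless).
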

\begin{proof}
    The proof can be found in Section \ref{sec:propSDE}.
\end{proof}

\section{Continuity with respect to the kernel}\label{sec:continuity}

We now show that a process following the Hawkes jump-diffusion dynamics \eqref{eq:EDS} is stable with respect to a small perturbation in the kernel $\phi$, provided that the stability assumption \ref{ass:stability} remains in force. 
We start by showing the strongest continuity result in $\mathbb L^2$ if the process $X$ does not impact the intensity $\lambda$, and then we prove the continuity in $\mathbb L^1$ for the general case.
\subsection{State independent intensity}
The independence of the intensity $\lambda$ of the process $X$ makes the system \eqref{eq:EDS} triangular, in the sense that for a given $\Pi$ we can construct a path $\lambda$ independently of $X$, which can then be seen as a classical "jump-diffusion". This means that standard techniques such that Gr\"onwall's inequality can be applied. \\
Given two kernels $\phi$ and $\tilde \phi$ as well as a Brownian motion $W$ and a Poisson measure $\Pi$ we construct two processes $(X,\lambda)$ and $(\tilde X,\tilde \lambda)$ following the dynamics 
  \begin{equation}
  \label{eq:EDS_indep}
    \begin{cases}
        X_t &= x_0 + \int_0^t \mu(s,X_s) \d s + \int_0 ^t \sigma(s,X_s) \d W_s + \int_0^t \int_{\R_+ \times \R} y\gamma(s,X_{s-}) \boldsymbol{1}_{\theta \leq \lambda_s} \Pi (\d s, \d \theta, \d y)\\
        \lambda_t &= \lambda_{\infty}(t)+\psi \left(\int_0^{t-} \int_{\R_+ \times \R}\phi (t-s) \nu(s) b(y) \boldsymbol{1}_{\theta \leq \lambda_s} \Pi (\d s, \d \theta ,\d y) \right)
    \end{cases}
\end{equation}
and 
\begin{equation}
\label{eq:EDS_indep_tilde}
    \begin{cases}
        \tilde X_t &= x_0 + \int_0^t \mu(s,\tilde X_s) \d s + \int_0 ^t \sigma(s,\tilde X_s) \d W_s + \int_0^t \int_{\R_+ \times \R} y\gamma(s,\tilde X_{s-}) \boldsymbol{1}_{\theta \leq \tilde \lambda_s} \Pi (\d s, \d \theta, \d y)\\
        \tilde \lambda_t &= \lambda_{\infty}(t)+\psi \left(\int_0^{t-} \int_{\R_+ \times \R}\tilde\phi (t-s)\nu(s) b(y) \boldsymbol{1}_{\theta \leq \tilde \lambda_s} \Pi (\d s, \d \theta ,\d y) \right)
    \end{cases}.
\end{equation}
We start by proving that the difference between $\tilde \phi$ and $\phi$ in $\mathbb L^1$ and $\mathbb L^2$ control the difference between the intensities $\tilde \lambda$ and $\lambda$. 
\begin{Proposition}
    
\label{prop:continuity_lambda_noX}
  Assume that Assumptions \ref{ass:Lipschitz} and \ref{ass:gronwall} hold. Furthermore, suppose that Assumption \ref{ass:stability} is satisfied  both by $\phi$ and $\tilde \phi$. We then have a constant $C$ that does not depend on $T$ such that  
$$\E \left[\left | \tilde \lambda_t-\lambda_t\right| \right] \leq C\|\tilde \phi-\phi\|_1 \quad \text{and} \quad \E \left[ \left| \tilde \lambda_t-\lambda_t\right|^2\right] \leq C \left(\|\tilde \phi-\phi\|_1+\|\tilde\phi-\phi\|_2^2\right),$$
for all $t \in [0,T].$
\end{Proposition}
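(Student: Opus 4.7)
The plan is to use the $L$-Lipschitz property of $\psi$ to reduce the control of $|\tilde\lambda_t-\lambda_t|$ to a single difference of two Poisson stochastic integrals, and then to split that difference into a kernel-perturbation piece that produces the desired $\|\tilde\phi-\phi\|$-type terms, and an intensity-perturbation piece that will be absorbed via a Volterra--Gronwall argument relying on the stability in Assumption~\ref{ass:stability}. A preliminary ingredient is the uniform bound $\sup_{t\in[0,T]}\E[\lambda_t]\leq M_1$ (and analogously $\E[\lambda_t^2]\leq M_2$) with $M_i$ independent of $T$; this is standard (cf.\ the proof of Proposition~\ref{prop:SDE}) and follows by taking expectation (resp.\ second moment) of the equation for $\lambda$, using the compensation formula $\E\bigl[\int f(s,y)\mathbf 1_{\theta\leq\lambda_s}\,\Pi(\d s,\d\theta,\d y)\bigr]=\E\bigl[\int f(s,y)\lambda_s\,\d s\,m(\d y)\bigr]$ together with $\psi(x)\leq\psi(0)+Lx$, and then applying a Volterra--Gronwall inequality with kernel $L\bar b\,|\phi|$ whose Neumann-series resolvent lies in $L^1(\R_+)$ by Assumption~\ref{ass:stability}, where $\bar b:=\int b(y)\,m(\d y)$.

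For the $\mathbb L^1$ bound, the Lipschitz property of $\psi$ and the triangle inequality give
\begin{align*}
|\tilde\lambda_t-\lambda_t|\leq\;& L\!\int_0^{t-}\!|\tilde\phi(t-s)-\phi(t-s)|\,\nu(s)\,b(y)\,\mathbf 1_{\theta\leq\tilde\lambda_s}\,\Pi(\d s,\d\theta,\d y)\\
&+L\!\int_0^{t-}\!|\phi(t-s)|\,\nu(s)\,b(y)\,|\mathbf 1_{\theta\leq\tilde\lambda_s}-\mathbf 1_{\theta\leq\lambda_s}|\,\Pi(\d s,\d\theta,\d y).
\end{align*}
Taking expectations, using $\nu\leq 1$, the identity $\int_{\R_+}|\mathbf 1_{\theta\leq a}-\mathbf 1_{\theta\leq b}|\,\d\theta=|a-b|$, and the bound $\E[\tilde\lambda_s]\leq M_1$ from the preliminary step, one is led to the Volterra inequality
$$\E|\tilde\lambda_t-\lambda_t|\leq LM_1\bar b\,\|\tilde\phi-\phi\|_1+L\bar b\,(|\phi|\ast\E|\tilde\lambda_\cdot-\lambda_\cdot|)(t),$$
which yields $\E|\tilde\lambda_t-\lambda_t|\leq C\|\tilde\phi-\phi\|_1$ with $C$ independent of $T$ by the same resolvent argument.

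For the $\mathbb L^2$ bound, I would square the analogous signed decomposition and apply $(a+b)^2\leq 2a^2+2b^2$, then split each of the two squared Poisson integrals into its compensated-martingale part and its predictable drift. The martingale squares are handled via the Poisson isometry $\E\int f^2\,\d\hat\Pi$, and the drift squares via the Cauchy--Schwarz trick $\bigl(\int|k(s)|u(s)\,\d s\bigr)^2\leq\|k\|_1\int|k(s)|u(s)^2\,\d s$. The kernel-perturbation piece then contributes $O(\|\tilde\phi-\phi\|_2^2)$ from the isometry and $O(\|\tilde\phi-\phi\|_1^2)$ from the drift; since $\|\tilde\phi-\phi\|_1\leq 2\|\phi\|_1$ by Assumption~\ref{ass:stability}, the squared-$\mathbb L^1$ term is absorbed into a multiple of $\|\tilde\phi-\phi\|_1$. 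For the intensity-perturbation piece, the identity $|\mathbf 1_{\theta\leq\tilde\lambda_s}-\mathbf 1_{\theta\leq\lambda_s}|^2=|\mathbf 1_{\theta\leq\tilde\lambda_s}-\mathbf 1_{\theta\leq\lambda_s}|$ produces a term linear in $\E|\tilde\lambda_\cdot-\lambda_\cdot|$ (controlled by the $\mathbb L^1$ estimate just proved) and a term of the form $(|\phi|\ast\E|\tilde\lambda_\cdot-\lambda_\cdot|^2)(t)$ from the drift. Gathering everything leads to the Volterra inequality
$$\E|\tilde\lambda_t-\lambda_t|^2\leq C\bigl(\|\tilde\phi-\phi\|_1+\|\tilde\phi-\phi\|_2^2\bigr)+C\,(|\phi|\ast\E|\tilde\lambda_\cdot-\lambda_\cdot|^2)(t),$$
which is closed by the same Gronwall argument.

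The main obstacle is to keep the constant $C$ in both estimates genuinely independent of $T$: this hinges on the Volterra resolvent of the relevant kernel lying in $L^1(\R_+)$ rather than merely in $L^1([0,T])$, which is exactly what the spectral smallness in Assumption~\ref{ass:stability} guarantees. A secondary technical subtlety is to arrange the drift-squared bounds in the $\mathbb L^2$ step so that the coefficient in front of the unknown $\E|\tilde\lambda_\cdot-\lambda_\cdot|^2$ in the final Volterra inequality remains compatible with that same stability threshold---possibly by sharpening the $(a+b)^2\leq 2(a^2+b^2)$ decomposition or by using a refined second-moment identity for Poisson integrals---and to ensure that the $\mathbb L^1$ estimate is fed cleanly into the $\mathbb L^2$ computation so that no unsquared $|\tilde\lambda_s-\lambda_s|$ is left on the right-hand side.
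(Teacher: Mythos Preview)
Your $\mathbb L^1$ argument is essentially the paper's (the only cosmetic difference is that the paper puts $|\tilde\phi|$ rather than $|\phi|$ in front of the indicator-difference term, which is immaterial since both kernels satisfy Assumption~\ref{ass:stability}).

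For the $\mathbb L^2$ bound the two routes genuinely diverge. You square first and aim for a Volterra inequality in $\E|\tilde\lambda_t-\lambda_t|^2$; the paper instead resolves the \emph{linear} pathwise inequality
\[
|\tilde\lambda_t-\lambda_t|\le M_t+B_t+L\bar b\,(|\tilde\phi|\ast|\tilde\lambda-\lambda|)(t)
\]
via Lemma~\ref{lmm:ineg_de_base} (applied $\omega$-by-$\omega$) to obtain $|\tilde\lambda_t-\lambda_t|\le M_t+B_t+\bigl(Q\ast(M+B)\bigr)(t)$ with $Q=\sum_{n\ge1}(L\bar b|\tilde\phi|)^{(n)}\in\mathbb L^1(\R_+)$, and only then squares and takes expectations, computing the cross-moments $\E[M_uM_t]$ and $\E[B_uB_t]$ directly from the It\^o isometry and Lemma~\ref{lmm:moments_lambda}. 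The payoff of the paper's order of operations is that the stability condition $L\bar b\|\tilde\phi\|_1<1$ is invoked exactly once, at the linear level where it applies verbatim, so no constant-tracking is needed. Your route is correct too, but the concern you flag is real: with the crude bound $(a+b)^2\le 2(a^2+b^2)$ applied twice, the resulting Volterra kernel for the second moment has $\mathbb L^1$ norm $4(L\bar b\|\phi\|_1)^2$, which need not be $<1$. You must therefore carry out the refinement you hint at, replacing $2(a^2+b^2)$ by $(1+\epsilon)b^2+(1+\epsilon^{-1})a^2$ (and likewise in the martingale/drift split) so that the coefficient on the convolution term becomes $(1+\epsilon)^2(L\bar b\|\phi\|_1)^2$, which can be made $<1$ by choosing $\epsilon$ small. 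With that adjustment your argument closes.
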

\begin{proof}
    The proof can be found in Section \ref{sec:thmcontinuity_lambda_noX}.
\end{proof}
We point out here that the stability Assumption \ref{ass:stability} ensures that the two paths $\lambda$ and $\tilde \lambda$ differ by a constant that does not depend on time in $\mathbb L^1$ and $\mathbb L^2$. The lack of such an assumption would mean the need to resort to a Gr\"onwall type of argument that produces a bound of order $\mathcal{O}(e^{CT})$ for the error observed on a time interval $[0,T]$.\\
Such a scenario would be highly undesirable from a numerical point of view as it would yield a bound of order $\mathcal{O}(\exp(C e^{CT^2}))$ on the difference between $X$ and $\tilde X$.
\begin{Theorem}
\label{thm:continuity_X_noX}
    Assume that Assumptions \ref{ass:Lipschitz} and \ref{ass:gronwall} hold. Furthermore, suppose that Assumption \ref{ass:stability} is satisfied  both by $\phi$ and $\tilde \phi$. 
    Let $(X, \lambda)$ and $(\tilde X, \tilde \lambda)$ be solutions of Equations \eqref{eq:EDS_indep} and \eqref{eq:EDS_indep_tilde} respectively. 
    We then have constants $C_1$ and $C_2$ that do not depend on $T$ such that 
    $$\E \left[\sup_{t\in[0,T]}\left |\tilde X_t-X_t \right|^2\right]\leq C (\|\tilde \phi-\phi\|_1+\|\tilde \phi-\phi\|_2^2)e^{CT^2}$$
\end{Theorem}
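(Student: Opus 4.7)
The plan is to set $\Delta X_t := \tilde X_t - X_t$ and $\Delta \lambda_t := \tilde\lambda_t - \lambda_t$, decompose $\Delta X_t$ into its drift, diffusion and jump components, and apply a Grönwall argument to $v(t):= \E[\sup_{s\le t}|\Delta X_s|^2]$. The drift term is handled by Cauchy--Schwarz and the Lipschitz assumption, producing a contribution of the form $C\,t\int_0^t \E|\Delta X_s|^2\,\d s$, which is where an extra factor $T$ in the Grönwall constant arises (and ultimately produces the $e^{CT^2}$ rather than $e^{CT}$ rate). The diffusion term is standard: Burkholder--Davis--Gundy and the Lipschitz property of $\sigma$ yield a contribution $C\int_0^t \E|\Delta X_s|^2\,\d s$.

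The main work is in handling the jump term, which I would split as
\begin{equation*}
\int_0^t\!\!\int y\bigl[\gamma(s,\tilde X_{s-})-\gamma(s,X_{s-})\bigr]\ind_{\theta\le \tilde\lambda_s}\Pi(\d s,\d\theta,\d y)
\;+\;\int_0^t\!\!\int y\,\gamma(s,X_{s-})\bigl[\ind_{\theta\le\tilde\lambda_s}-\ind_{\theta\le\lambda_s}\bigr]\Pi(\d s,\d\theta,\d y).
\end{equation*}
For both pieces I would compensate, writing $\Pi = \widetilde\Pi + \d s\,\d\theta\, m(\d y)$, so that each piece becomes a martingale plus a finite-variation term, after which BDG can be applied. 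For the first piece the Lipschitz property of $\gamma$ turns the integrand into $C|\Delta X_{s-}|\ind_{\theta\le\tilde\lambda_s}$, and integrating $\theta$ against the compensator gives moments of $\tilde\lambda_s$; using the uniform moment bounds on $\tilde\lambda$ (coming from Assumption \ref{ass:stability} in the same spirit as Proposition \ref{prop:SDE}), this contributes $C\int_0^t \E|\Delta X_s|^2\,\d s$. For the second piece, the key observation is that $|\ind_{\theta\le \tilde\lambda_s}-\ind_{\theta\le \lambda_s}|=\ind_{\min(\lambda_s,\tilde\lambda_s)<\theta\le\max(\lambda_s,\tilde\lambda_s)}$, so that integrating in $\theta$ produces exactly $|\Delta\lambda_s|$; boundedness of $\gamma$ and the second moment of the marks then reduce both the martingale quadratic variation and the compensator to integrals of $\E|\Delta\lambda_s|$ and $\E|\Delta\lambda_s|^2$, which are exactly the quantities controlled by Proposition \ref{prop:continuity_lambda_noX}.

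Putting these bounds together yields an inequality of the form
\begin{equation*}
v(t)\;\le\;C(1+T)\bigl(\|\tilde\phi-\phi\|_1+\|\tilde\phi-\phi\|_2^2\bigr)\;+\;C(1+T)\int_0^t v(s)\,\d s,
\end{equation*}
and Grönwall's lemma then delivers $v(T)\le C(\|\tilde\phi-\phi\|_1+\|\tilde\phi-\phi\|_2^2)\,e^{C(1+T)T}$, which is absorbed into the claimed $e^{CT^2}$ rate up to an adjustment of the constant.

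The main obstacle I anticipate is the careful bookkeeping in the jump term: the integral is against $\Pi$ rather than $\widetilde\Pi$, so one must compensate in a way that keeps the BDG bound on the martingale part and produces compensator integrands which, after integration in $\theta$, match exactly the $\mathbb L^1$ and $\mathbb L^2$ quantities estimated in Proposition \ref{prop:continuity_lambda_noX}. A secondary difficulty, which I would handle by invoking the stability assumption \ref{ass:stability}, is to ensure that the $\mathbb L^2$ moments of $\tilde\lambda_s$ arising from the Lipschitz-$\gamma$ term are uniformly bounded in $s\in[0,T]$ and $T>0$, so that no further $T$-dependence creeps into the constant beyond the $e^{CT^2}$ factor already allowed.
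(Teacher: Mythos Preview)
Your overall strategy---compensate the jump integrals, apply Burkholder--Davis--Gundy to the martingale parts, reduce the indicator difference to $|\tilde\lambda_s-\lambda_s|$, and close with Gr\"onwall fed by Proposition~\ref{prop:continuity_lambda_noX}---is exactly the paper's route, and the bookkeeping you describe for the second jump piece is correct.

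There is, however, a genuine gap in your treatment of the \emph{first} jump piece, the one carrying $[\gamma(s,\tilde X_{s-})-\gamma(s,X_{s-})]\boldsymbol{1}_{\theta\le\tilde\lambda_s}$. After compensating and applying BDG (or squaring the finite-variation part), you are left with expectations of the form
\[
\E\!\left[\int_0^t \bigl|\gamma(s,\tilde X_s)-\gamma(s,X_s)\bigr|^2\,\tilde\lambda_s\,\d s\right]
\quad\text{and}\quad
\E\!\left[\Bigl(\int_0^t \bigl|\gamma(s,\tilde X_s)-\gamma(s,X_s)\bigr|\,\tilde\lambda_s\,\d s\Bigr)^{\!2}\right].
\]
You propose to control these by ``uniform moment bounds on $\tilde\lambda$ coming from Assumption~\ref{ass:stability}''. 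But a bound on $\E[\tilde\lambda_s]$ or $\E[\tilde\lambda_s^2]$ does \emph{not} allow you to conclude $\E[|\Delta X_s|^2\tilde\lambda_s]\le C\,\E[|\Delta X_s|^2]$: the processes $\tilde\lambda_s$ and $\Delta X_s$ both depend on the same Poisson measure $\Pi$ and are therefore correlated, so the product cannot be split. Attempting H\"older would push you to $\E[|\Delta X_s|^4]$ and a different (and unproved) Gr\"onwall loop.

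The fix---and this is precisely what the paper does---is to invoke Assumption~\ref{ass:gronwall} rather than Assumption~\ref{ass:stability} at this step. Under its first alternative, $\psi$ is bounded, hence $\tilde\lambda_s\le\|\lambda_\infty\|_\infty+\|\psi\|_\infty$ \emph{pathwise}, and the factor $\tilde\lambda_s$ comes out as a constant. Under its second alternative, $\gamma$ does not depend on $x$, so $\gamma(s,\tilde X_{s-})-\gamma(s,X_{s-})=0$ and the whole term vanishes. Either way you obtain the contribution $C\int_0^t\E|\Delta X_s|^2\,\d s$ you wanted, and the rest of your argument goes through unchanged.
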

\begin{proof}
The proof can be found in Section \ref{sec:thmcontinuity_X_noX}.
\end{proof}

\subsection{State dependent intensity}
Unlike in the previous subsection, the presence of $X$ in the dynamics that drive the intensity $\lambda$ entangles the $\mathbb L^1$ character of the driving Poisson randomness $\Pi$ with the $\mathbb L^2$ character of the Brownian noise. This renders the classic Gr\"onwall techniques useless because of the presence of square roots. We thus need to resort to different contraction arguments to prove continuity. 
\begin{Proposition}
\label{prop:continuity_lambda_depend_X}
    Given two kernels $\phi$ and $\tilde\phi$, assume that Assumptions \ref{ass:Lipschitz}, \ref{ass:stability} and \ref{ass:gronwall} hold. 
    Let $(X,\lambda)$ and $(\tilde X,\tilde \lambda)$ be the solutions of SDE \eqref{eq:EDS} with kernels $\phi$ and $\tilde \phi$ respectively. There exists a non negative function $R\in \mathbb L^1(\R_+)$ and a positive constant $C$ such that 
    $$\E \left[\left | \tilde \lambda_t-\lambda_t\right|\right] \leq C \left(\left\|\tilde \phi-\phi \right \|_1+\int_0^t R(t-s)\left(\E |\lambda_{\infty}(s,\tilde X_s)-\lambda_{\infty}(s,X_s)|+\E |\nu(s,\tilde X_s)-\nu(s,X_s)|\lambda_s\right)\d s\right).$$
    And as a consequence 
    $$ \E \left[\left | \tilde \lambda_t-\lambda_t\right|\right] \leq C \left( \|\tilde \phi-\phi\|_1+\E \left[\sup_{s\in[0,t]} \left |\tilde X_s -X_s \right|\right]\right).$$
    
\end{Proposition}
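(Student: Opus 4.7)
The plan is to reduce the claim to a linear Volterra inequality for $u(t) := \E|\tilde\lambda_t - \lambda_t|$ and invert it using the $L^1$-resolvent of $|\phi|$. Writing
$$\tilde\lambda_t - \lambda_t = \bigl[\lambda_\infty(t,\tilde X_{t-}) - \lambda_\infty(t,X_{t-})\bigr] + \bigl[\psi(\tilde H_t) - \psi(H_t)\bigr],$$
where $H_t$ and $\tilde H_t$ denote the stochastic integrals appearing inside $\psi$ in \eqref{eq:EDS}, I would first use the Lipschitz constant $L$ of $\psi$ on the second bracket, then telescope $\tilde H_t - H_t = I_1 + I_2 + I_3$ by swapping the three ingredients (kernel $\phi \to \tilde\phi$; state argument $X \to \tilde X$ inside $\nu$; jump-selection indicator $\mathbf{1}_{\theta \le \lambda_s} \to \mathbf{1}_{\theta \le \tilde\lambda_s}$) one at a time.

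The main computation is to estimate $\E|I_k|$ by replacing $\Pi$ with its compensator $\d s\, \d\theta\, m(\d y)$: the $\theta$-integration produces $\tilde\lambda_s$ for $I_1,I_2$ and $|\tilde\lambda_s - \lambda_s|$ for $I_3$, while the independent mark factors out an $\E b(Y)$. Since Proposition \ref{prop:SDE} gives $\sup_{s\le T}\E\tilde\lambda_s < \infty$, $\E|I_1| \le C\|\tilde\phi - \phi\|_1$; $\E|I_3|$ becomes the Volterra term $C(|\phi|\star u)(t)$; and $\E|I_2|$ splits, via $\tilde\lambda_s = \lambda_s + (\tilde\lambda_s - \lambda_s)$ and boundedness of $\nu$ (Assumption \ref{ass:stability}), into a forcing piece $C\int_0^t |\phi|(t-s)\,\E\bigl[|\nu(s,\tilde X_s) - \nu(s,X_s)|\lambda_s\bigr]\,\d s$ and an extra contribution absorbed into $C(|\phi|\star u)(t)$.

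The resulting inequality has the form $u(t) \le A(t) + (K\star u)(t)$ with $K := CL\,\E b(Y)\,|\phi|$ and $A$ containing exactly the three ingredients of the claim. Stability (Assumption \ref{ass:stability}) gives $\|K\|_1 < 1$, so the classical linear Volterra theory produces a nonnegative $R \in L^1(\R_+)$ with $u \le A + R\star A$, which can be recast in the stated convolution form since $\|R\|_1$ is independent of $T$. The consequence then follows: Lipschitz continuity of $\lambda_\infty$ controls the first integrand by $C\E|\tilde X_s - X_s|$, and the dichotomy of Assumption \ref{ass:gronwall} controls the second, since either $\psi$ is bounded (so $\lambda$ is uniformly bounded and Lipschitzness of $\nu$ yields the estimate) or $\nu$ is $x$-independent and the term vanishes.

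The principal obstacle I foresee is the asymmetry between $\tilde\lambda_s$ and $\lambda_s$ when reducing $\E|I_2|$ to the form stated (which features $\lambda_s$, not $\tilde\lambda_s$): the absorption trick above must be carried out without spoiling the strict contraction $\|K\|_1 < 1$ that underlies the resolvent step. A secondary subtlety is ensuring that $R$ lives in $L^1(\R_+)$ and not merely $L^1_{\mathrm{loc}}$, which is exactly what the spectral condition in Assumption \ref{ass:stability} delivers and which is crucial for the bound to be $T$-uniform.
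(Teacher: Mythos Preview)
Your approach is correct and is essentially the paper's: Lipschitz bound on $\psi$, three-term telescoping of the integrand, compensation, then inversion of the resulting linear Volterra inequality via the $L^1$ resolvent (the paper's Lemma~\ref{lmm:ineg_de_base}). The obstacle you flag is genuine for your chosen telescoping order but disappears if you swap the indicator \emph{first}:
\[
\tilde\phi\,\nu(\tilde X)\,\mathbf{1}_{\theta\le\tilde\lambda} - \phi\,\nu(X)\,\mathbf{1}_{\theta\le\lambda}
= \tilde\phi\,\nu(\tilde X)\bigl[\mathbf{1}_{\theta\le\tilde\lambda} - \mathbf{1}_{\theta\le\lambda}\bigr]
+ \tilde\phi\bigl[\nu(\tilde X) - \nu(X)\bigr]\mathbf{1}_{\theta\le\lambda}
+ \bigl[\tilde\phi - \phi\bigr]\nu(X)\,\mathbf{1}_{\theta\le\lambda}.
\]
Now the $\nu$-difference term already carries $\mathbf{1}_{\theta\le\lambda_s}$, so compensation delivers $\lambda_s$ directly and no absorption trick is needed; the Volterra kernel becomes $K = L\,\E b(Y)\,|\tilde\phi|$, which is a strict contraction since $\tilde\phi$ also satisfies Assumption~\ref{ass:stability}, and your remark on $R\in L^1(\R_+)$ then applies verbatim. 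One small correction: the uniform bound $\sup_{s\le T}\E\lambda_s<\infty$ you invoke is Lemma~\ref{lmm:moments_lambda}, not Proposition~\ref{prop:SDE}.
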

\begin{proof}
 The proof can be found in Section \ref{sec:propcontinuity_lambda_depend_X}.
\end{proof}

\begin{Theorem}
\label{thm:continuity_X}
    If Assumptions \ref{ass:Lipschitz}, \ref{ass:stability} and \ref{ass:gronwall} hold, then there exist positive constants $C_1$ and $C_2$ that do not depend on $T$ such that 
    $$\E \left[\sup_{s \in [0,T]}\left |\tilde X_s-X_s \right |\right] \leq C _1\| \tilde \phi-\phi\|_1e^{C_2 T}.$$
\end{Theorem}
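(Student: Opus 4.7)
Let $u(t):=\E\!\left[\sup_{s\in[0,t]}|\tilde X_s - X_s|\right]$. The plan is to derive a linear integral inequality of the form
\begin{equation*}
u(t)\ \le\ C_1\,\|\tilde\phi-\phi\|_1\,+\,C_2\int_0^t u(s)\,\d s,\qquad t\in[0,T],
\end{equation*}
with constants $C_1,C_2$ independent of $T$, and then conclude by the standard Grönwall lemma, which yields $u(T)\le C_1\|\tilde\phi-\phi\|_1 e^{C_2 T}$.

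To obtain this inequality, I would subtract the two SDEs and split the difference $\tilde X_t-X_t$ into its drift, diffusion and jump contributions $D_t$, $M_t$, $J_t$. The drift term is handled directly by Lipschitz continuity of $\mu$: $\E[\sup_{s\le t}|D_s|]\le L\int_0^t u(s)\,\d s$. For the diffusion term, the Burkholder-Davis-Gundy inequality gives $\E[\sup_{s\le t}|M_s|]\le C\,\E[(\int_0^t|\sigma(s,\tilde X_s)-\sigma(s,X_s)|^2\d s)^{1/2}]$ which, after Lipschitz continuity of $\sigma$, is bounded via the elementary factorisation $(\int_0^t g_s^2\d s)^{1/2}\le (\sup_{s\le t}g_s)^{1/2}(\int_0^t g_s\,\d s)^{1/2}$ and Young's inequality $\sqrt{ab}\le \varepsilon a+\frac{b}{4\varepsilon}$ by $\varepsilon\,u(t)+C_\varepsilon\int_0^t u(s)\,\d s$. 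Choosing $\varepsilon$ small enough will let us absorb $\varepsilon u(t)$ on the left-hand side at the end.

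The jump term is the key step. Using the triangular inequality on the Poisson integral (the integrand is already non-negative once absolute values are taken) and the decomposition
\begin{equation*}
\gamma(s,\tilde X_{s-})\mathbf 1_{\theta\le\tilde\lambda_s}-\gamma(s,X_{s-})\mathbf 1_{\theta\le\lambda_s}=\bigl(\gamma(s,\tilde X_{s-})-\gamma(s,X_{s-})\bigr)\mathbf 1_{\theta\le\tilde\lambda_s}+\gamma(s,X_{s-})\bigl(\mathbf 1_{\theta\le\tilde\lambda_s}-\mathbf 1_{\theta\le\lambda_s}\bigr),
\end{equation*}
and integrating out $(\theta,y)$ against the intensity $\d\theta\,m(\d y)$ (which is legitimate since $m$ has a finite first moment under Assumption \ref{ass:Lipschitz}), I get
\begin{equation*}
\E[\sup_{s\le t}|J_s|]\ \le\ C\int_0^t\E\bigl[|\gamma(s,\tilde X_s)-\gamma(s,X_s)|\,\tilde\lambda_s\bigr]\d s+C\int_0^t\E\bigl[|\tilde\lambda_s-\lambda_s|\bigr]\d s.
\end{equation*}
The second integral is treated directly using the uniform-in-time estimate of Proposition \ref{prop:continuity_lambda_depend_X}, which contributes $Ct\|\tilde\phi-\phi\|_1+C\int_0^t u(s)\,\d s$. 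For the first integral, Assumption \ref{ass:gronwall} is crucial: under \textit{(i)} $\psi$ is bounded, so $\tilde\lambda_s$ is bounded almost surely and we can pull it out; under \textit{(ii)} $\gamma$ does not depend on $x$, so this term vanishes entirely. In either case the contribution is at most $C\int_0^t u(s)\,\d s$.

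Collecting the three estimates and absorbing the $\varepsilon u(t)$ term from the diffusion yields the announced linear inequality. The main obstacle I expect is reconciling the $\mathbb L^1$ scale (natural for the jump and sup-of-increments bounds) with the $\mathbb L^2$ scale imposed by the Brownian integral; the $(\sup)^{1/2}(\int)^{1/2}$ Young trick is essential, and any cruder bound (e.g.\ $(\int g^2)^{1/2}\le\sqrt T\sup g$) would introduce a factor $\sqrt T$ inside the Grönwall integrand and produce the doubly-exponential dependence in $T$ that the authors explicitly warn against. A secondary point is that one must rely on Proposition \ref{prop:continuity_lambda_depend_X} (rather than re-deriving a bound on $\E|\tilde\lambda-\lambda|$ via Grönwall) to preserve the clean $e^{C_2 T}$ dependence rather than $e^{C e^{CT^2}}$.
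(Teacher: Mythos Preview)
Your proposal is correct and takes a genuinely different route from the paper's own proof.

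The paper does \emph{not} use the $(\sup g)^{1/2}(\int g)^{1/2}$--Young trick. Instead it applies the cruder bound $\bigl(\int_u^t g_s^2\,\d s\bigr)^{1/2}\le\sqrt{t-u}\,\sup_{s\in[u,t]}g_s$ that you explicitly warn against, but compensates for it by working on intervals $[u,t]$ of fixed small length $\delta=\tfrac{1}{16C^2}$ (independent of $T$). On such an interval the BDG contribution is $C\sqrt\delta\,\E[\sup_{[u,t]}|\tilde X-X|]$, which can be absorbed on the left. Combining this with Proposition~\ref{prop:continuity_lambda_depend_X} and setting $f_n:=\E[\sup_{[0,n\delta]}|\tilde X-X|]$, the paper derives a discrete recursion $f_{n+1}\le 6f_n+\|\tilde\phi-\phi\|_1$, which telescopes to $f_n\le\tfrac{6^n}{5}\|\tilde\phi-\phi\|_1$; since $n\approx T/\delta$, this gives the claimed $e^{C_2T}$ bound.

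Your approach is more direct: the Young-inequality absorption of the diffusion term lets you work on all of $[0,T]$ at once and invoke the continuous Gr\"onwall lemma without any time slicing or discrete iteration. The paper's route is more elementary in the sense that it avoids the slightly clever factorisation, at the cost of the extra bookkeeping for the recursion. One minor point: your final displayed inequality should actually read $u(t)\le C_1 t\,\|\tilde\phi-\phi\|_1+C_2\int_0^t u(s)\,\d s$ (the $t$ coming from integrating Proposition~\ref{prop:continuity_lambda_depend_X} over $[0,t]$), but this still yields $u(T)\le C_1'\|\tilde\phi-\phi\|_1 e^{C_2'T}$ after Gr\"onwall since the extra factor $T$ is absorbed into the exponential.
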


\begin{proof}
 The proof can be found in Section \ref{sec:thmcontinuity_X}.
\end{proof}

\section{The Markov approximation of Hawkes jump-diffusions}\label{sec:main}

Due to the presence of the convolution in the intensity's definition in Equation \eqref{eq:EDS}, the stochastic process $X$ -as well as $(X,\lambda)$- is not a Markov process. This means that the standard techniques used for stochastic control no longer apply. \\
However, we can show that if the kernel belongs to a certain family of $\mathbb L^1(\R_+) \cap\mathbb L^2(\R_+)$ functions, one can retrieve the Markov property , albeit at the cost of adding auxiliary processes. \\
A function $\phi$ is called a sum of exponentials if there exists a positive integer $n$, a set of coefficients $(\eta_1,\cdots,\eta_n) \in \R^n$ and of exponents $(\beta_1,\cdots,\beta_n) \in \R_+^n$ such that 
\begin{equation}
    \label{eq:sum_of_exponentials}
    \phi(t)=\sum_{k=1}^n \eta_k e^{-\beta_kt}, \quad \text{for all }t \in \R_+.
\end{equation}
Without loss of generality, we assume that $0<\beta_1 <\cdots<\beta_n$.\\
We now take $(X,\lambda)$ to be a solution of \eqref{eq:EDS}, with $\phi$ defined by \eqref{eq:sum_of_exponentials}. In this case, the intensity takes the form
\begin{align*}
    \lambda_t &= \lambda_{\infty}(t,X_{t-})+\psi \left(\int_0^{t-} \int_{\R_+ \times \R}\phi (t-s) b(y) \nu (s,X_{s-})\boldsymbol{1}_{\theta \leq \lambda_s} \Pi (\d s, \d \theta ,\d y) \right)\\
    &= \lambda_{\infty}(t,X_{t-})+\psi \left(\int_0^{t-} \int_{\R_+ \times \R}\sum_{k=1}^n \eta_k e^{-\beta_k(t-s)} b(y) \nu (s,X_{s-})\boldsymbol{1}_{\theta \leq \lambda_s} \Pi (\d s, \d \theta ,\d y) \right)\\
    &=\lambda_{\infty}(t,X_{t-})+\psi \left(\sum_{k=1}^n\eta_k \xi^{k}_t \right),\\
\end{align*}
where the auxiliary processes $(\xi^{j}_t)_{t\in \R_+}$ are defined by 
\begin{equation}
    \label{eq:auxiliary}
    \xi^{j}_t=\int_0^{t-}\int_{\R_+ \times \R} e^{-\beta_j(t-s)}b(y)\nu(s,X_{s-})\boldsymbol{1}_{\theta \leq \lambda_s} \Pi(\d s, \d \theta ,\d y).
\end{equation}
Thanks to the flow property of the exponential function, we have that 
$$ e^{\beta_j t}\xi^{j}_t=\int_0^{t-}\int_{\R_+ \times \R} e^{\beta_j s}b(y)\nu(s,X_{s-})\boldsymbol{1}_{\theta \leq \lambda_s} \Pi(\d s, \d \theta ,\d y),$$
which by differentiation yields 
$$\d \xi^{j}_t+\beta_j\xi^{j}_t \d t=\int_{\R_+ \times \R}b(y)\nu(t,X_{t-})\boldsymbol{1}_{\theta \leq \lambda_t}\Pi(\d t, \d \theta,\d y).$$
Therefore, the augmented process $(X,\xi^{1},\cdots,\xi^{n})$ solves the first order SDE 
\begin{equation}
    \label{eq:SDE_markov}
    \begin{cases}
        \d X_t=&\mu(t,X_t) \d t+\sigma(t,X_t)\d W_t+\int_{\R_+\times \R}y \gamma(t,X_{t-})\boldsymbol{1}_{\theta \leq \lambda_\infty(t,X_{t-})+\psi \left(\sum_{k=1}^n\eta_k \xi^{k}_t \right)} \Pi(\d t,\d \theta,\d y)\\
        \d \xi^{1}_t=&- \beta_1 \xi_{t}^{1} \d t + \int_{\R_+ \times \R} b(y)\nu(t,X_{t-})\boldsymbol{1}_{\theta \leq \lambda_\infty(t,X_{t-})+\psi \left(\sum_{k=1}^n\eta_k\xi_t^{k}\right)} \Pi(\d t,\d \theta,\d y)\\
        \vdots  \\
        \d \xi^{n}_t=&- \beta_n \xi_{t}^{n} \d t + \int_{\R_+ \times \R} b(y)\nu(t,X_{t-})\boldsymbol{1}_{\theta \leq \lambda_\infty(t,X_{t-})+\psi \left(\sum_{k=1}^n\eta_k\xi_t^{k}\right)} \Pi(\d t,\d \theta,\d y)\\
    \end{cases}
    .
\end{equation}
Such a process is therefore a Markov process whose infinitesimal generator $\mathcal A$ defined for any smooth function $g : [0,T] \times\R^{n+1}\to\R$ by
\begin{align}
    \label{eq:generator}
    \mathcal A g(t,x,\xi_1,\cdots,\xi_n):=&\frac{\partial g}{\partial t}+\mu(t,x)\frac{\partial g}{\partial x}-\sum_{k=1}^n \beta_k \xi_k\frac{\partial g}{\partial \xi_k} + \frac{1}{2}\sigma ^2(t,x)\frac{\partial ^2g}{\partial x^2}\nonumber \\
    &+\left(\lambda_\infty(t,x)+\psi \left(\sum_{k=1}^n \eta_k \xi_k\right)\right) \bigg(\int_{\R}g\left(t,x+\gamma(t,x)y,\xi_1+\nu(t,x)b(y),\cdots,\xi_n+\nu(t,x)b(y)\right)\nonumber \\
    &-g(t,x,\xi_1,\cdots,\xi_n) m (\d y )\bigg). \nonumber \\
\end{align}
For such dynamics, it is then possible to write the Hamilton-Jacobi-Bellman for the augmented process $(X,\xi^{1},\cdots, \xi^{n})$ and one can use the classical techniques for stochastic control on jump-diffusion equations. \\
We now show that such processes are universal approximators of jump-diffusion processes driven by Hawkes processes. Similarly to observation 2.9 in \citeauthor{DLO} \cite{DLO}, this is based on the following two facts: 
\begin{itemize}
    \item Sums of exponentials are universal approximators of integrable functions, and not only of completely monotonous ones. 
    \item Hawkes jump-diffusions are continuous with respect to their kernel. 
\end{itemize}
\begin{Theorem}\label{thm:approximation}
    Assume that Assumptions \ref{ass:Lipschitz},\ref{ass:stability} and \ref{ass:gronwall} are in force. Given a time horizon $T>0$, let $(X,\lambda)$ be a solution of the SDE \eqref{eq:EDS} on $[0,T]$.  \\
    For any  $\varepsilon >0$, there exists an $n \in \N$ and a $n+1$-dimensional Markov process $(\tilde X, \tilde \xi^{1},\cdots, \tilde\xi^{n})$ that follows the dynamics \eqref{eq:SDE_markov} such that 
    $$\E \left[\sup_{s\in[0,T]} \left|X_s-\tilde X_s \right|\right]\leq \varepsilon .$$
\end{Theorem}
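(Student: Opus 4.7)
The plan is to combine two ingredients: the continuity of the Hawkes jump-diffusion with respect to its kernel established in Theorem~\ref{thm:continuity_X}, and a density statement saying that sums of exponentials with positive exponents are dense in $\mathbb{L}^1([0,T],\d t)$. Given $\varepsilon>0$ and the fixed horizon $T$, Theorem~\ref{thm:continuity_X} furnishes constants $C_1,C_2>0$ (independent of $T$) with
$$\E\left[\sup_{s\in[0,T]}|X_s-\tilde X_s|\right]\leq C_1\|\tilde\phi-\phi\|_1\, e^{C_2 T},$$
valid for any kernel $\tilde\phi$ satisfying Assumptions~\ref{ass:Lipschitz}, \ref{ass:stability} and \ref{ass:gronwall}. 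It therefore suffices to exhibit, for any prescribed $\delta>0$, a sum of exponentials $\tilde\phi(t)=\sum_{k=1}^n \eta_k e^{-\beta_k t}$ with $\beta_k>0$ and $\eta_k\in\R$ such that $\|\tilde\phi-\phi\|_1\leq\delta$ while \ref{ass:stability} remains in force; choosing $\delta=\varepsilon/(C_1e^{C_2 T})$ then closes the argument, since by the derivation preceding the theorem the SDE \eqref{eq:EDS} driven by such a $\tilde\phi$ admits the Markovian reformulation \eqref{eq:SDE_markov} in the augmented state $(\tilde X,\tilde\xi^1,\dots,\tilde\xi^n)$.

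The crux is thus the density result. I would establish it by duality: by Hahn-Banach, the linear span $\mathcal{E}=\mathrm{span}\{e^{-\beta \cdot}:\beta>0\}$ is dense in $\mathbb{L}^1([0,T])$ if and only if every $g\in \mathbb{L}^\infty([0,T])$ satisfying $\int_0^T g(t)e^{-\beta t}\d t=0$ for all $\beta>0$ vanishes almost everywhere. Viewing $\beta\mapsto\int_0^T g(t)e^{-\beta t}\d t$ as an entire function of $\beta\in\mathbb{C}$ that is zero on $(0,\infty)$, it is identically zero; the injectivity of the Laplace transform on compactly supported functions then yields $g=0$ a.e. Alternatively, one may appeal to Stone-Weierstrass on the unital algebra generated by $\{e^{-\beta\cdot}:\beta>0\}$, which is dense in $C([0,T])$, combined with the density of $C([0,T])$ in $\mathbb{L}^1([0,T])$. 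Either route provides a sequence of sums of exponentials $\tilde\phi_n\to\phi$ in $\mathbb{L}^1([0,T])$.

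It remains to verify that the stability Assumption~\ref{ass:stability} can be preserved along the approximation. Since $\|\cdot\|_1$ is continuous for $\mathbb{L}^1$ convergence and the spectral radius is continuous in the matrix entries, $\mathrm{sp}(\|\tilde\phi_n\|_1)\to\mathrm{sp}(\|\phi\|_1)$; the strict inequality $L\E[b(Y)]\mathrm{sp}(\|\phi\|_1)<1$ thus persists for $n$ large. Boundedness of $\tilde\phi_n$ on $[0,T]$ and its $\mathbb{L}^1$ integrability are automatic, as it is a finite sum of bounded exponentials. Selecting $n$ simultaneously large enough to ensure $\|\tilde\phi_n-\phi\|_1\leq\varepsilon/(C_1 e^{C_2 T})$ and to guarantee stability, Proposition~\ref{prop:SDE} produces a unique solution $(\tilde X,\tilde\lambda)$ of \eqref{eq:EDS} with kernel $\tilde\phi_n$, which the derivation preceding the theorem recasts as \eqref{eq:SDE_markov}. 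Applying Theorem~\ref{thm:continuity_X} yields the claim.

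The main obstacle is less technical than conceptual: it is the density of sums of \emph{positive}-exponent exponentials in $\mathbb{L}^1([0,T])$, which -- though classical -- is precisely what frees the approximation scheme from the complete-monotonicity requirement imposed by the Bernstein-based constructions found in the existing literature. Quantitatively, one should also note that the rate at which $\|\tilde\phi_n-\phi\|_1\to 0$ is not controlled by the density argument alone, so while existence of an approximator is guaranteed for every $\varepsilon$, obtaining explicit $n$ as a function of $\varepsilon$ would require either smoothness assumptions on $\phi$ or a quantitative variant of the density result.
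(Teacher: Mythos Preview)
Your proof is correct and follows the same two-step strategy as the paper: invoke Theorem~\ref{thm:continuity_X} for the continuity estimate, then use the density of exponential sums in $\mathbb{L}^1$ to produce $\tilde\phi$ with $\|\tilde\phi-\phi\|_1\leq \varepsilon e^{-C_2 T}/C_1$. The only differences are cosmetic: the paper cites external density results in $\mathbb{L}^p(\R_+)$ (\cite{Kammler}, \cite{AK}) rather than arguing via Hahn--Banach or Stone--Weierstrass on $[0,T]$, and it does not explicitly verify that Assumption~\ref{ass:stability} persists for $\tilde\phi$---a point you handle carefully and which the paper passes over in silence.
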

\begin{proof}
    According to Theorem \ref{thm:continuity_X}, there exist positive $C_1$ and $C_2$ such that for any solution of \eqref{eq:EDS} with the same parameters and a different kernel $\tilde \phi$ satisfying Assumptions \ref{ass:Lipschitz} ,\ref{ass:stability} and \ref{ass:gronwall} we have that 
    $$\E \left[\sup_{s\in[0,T]} \left|X_s-\tilde X_s \right|\right]\leq C_1 \|  \phi- \tilde\phi\|_1e^{C_2T}.$$
    According to the corollary of Theorem 1 in \cite{Kammler} or Theorem 1 in \cite{AK}, for any $ \beta >0$ the set of finite linear combinations of exponentials $\left \{ \left(e^{-\beta k t}\right)_{t \in \R_+}\right \}_{k \geq 1}$ is dense in $\mathbb L^p(\R_+)$ for any $p\geq 1$. Thus, we can find a function $$\tilde \phi(t)=\sum_{k=1}^n \eta_k e^{-\beta k t}$$ such that 
    $$\|\phi-\tilde \phi\|_1\leq \frac{\varepsilon e^{-C_2 T}}{C_1},$$
    hence the result. 
\end{proof}

In practice, the approximation of a given kernel $\phi \in \mathbb L^1(\R_+)$ can be approached in the following way: 
\begin{itemize}
    \item While in theory the linear combinations of $\left \{\left(e^{-\beta k t}\right)_{t \in \R_+}\right\}_{k\ge 1}$ is dense in $\mathbb L^1(\R_+)$ for any fixed choice $\beta > 0$, this quantity be seen as a decay parameter that determines the length of the time span after which the effect of a past event becomes negligible. This means that a more appropriate choice of $\beta$ will lead to a better approximation with fewer exponentials.
    \item Once $\beta>0$ is fixed, we choose the number $n \ge 1$ of approximating exponentials. Naturally a higher number of terms has the potential to better approximate the kernel $\phi$, but comes at the price of increased complexity. For instance, the cost of simulation of the dynamics \eqref{eq:SDE_markov} is of order $\mathcal{O}(nT)$. This is still cheaper than the $\mathcal{O}(T^2)$ needed to simulate the original SDE \eqref{eq:EDS}.
    \item We then optimise the convex function 
    $$J(\boldsymbol{\eta}):=\int_0^{+\infty}\left | \sum_{k=1}^n \eta_k e^{-\beta k t}-\phi(t)\right | \d t .$$
    This can be done either:
    \begin{enumerate}
        \item Directly numerically, for instance using the method \texttt{scipy.optimize.minimize} coupled with the preferred quadrature method in Python.
        \item By optimising the function
        $$I(\boldsymbol{\eta}):=\int_0^{+\infty}\left ( \sum_{k=1}^n \eta_k e^{-\beta k t}-\phi(t)\right )^2 \d t $$
        which has advantage of being differentiable. In fact, the equation $\nabla_{\boldsymbol{\eta}}I(\boldsymbol{\eta})= \boldsymbol{0}$ has a unique solution $\boldsymbol{\eta}^*$ which can be determined by inverting the system 
        $$M_H \boldsymbol{\eta}^*=\boldsymbol{v}$$
        where $\boldsymbol{v}=\left(\int_0^{+\infty}e^{-\beta j t}\phi(t) \d t\right)_{j=1,\cdots,n}$ and $M_H=\left(\frac{1}{\beta(i+j)}\right)_{i,j=1,\cdots,n}$ is a modified Hilbert matrix. \\
        We then use the fact that $J(\boldsymbol{\eta}) \le I(\boldsymbol{\eta})^{1/2}$ (an immediate application of the Cauchy-Schwarz inequality) to obtain an approximation in $\mathbb L^1(\R_+)$ and $\mathbb L^2(\R_+)$. 
    \end{enumerate}
    \item If the desired precision is not obtained, $n$ should be increased and a new set of coefficients $\boldsymbol{\eta}$  should be computed. We point out that the exponential functions are not orthogonal in $\mathbb L^2(\R_+)$.
    \item We do not optimise over $\beta>0$ because of the lack of convexity. This means that if we optimise on $\beta$, the numerical methods are no longer guaranteed to converge to the global minimiser. 
\end{itemize}

\begin{Remark}
    It is also possible to prove that for any $\beta >0$, the span of $\left\{x^ke^{-\beta x}\right\}_{k\ge 0}$ is also dense in $\mathbb L^1(\R_+) \cap \mathbb L^2(\R_+)$, \textit{cf.} \cite{AK} for instance. This family also has the Markov property, due to the fact that $f_{k}:x \mapsto x^{k}e^{-\beta x}$ satisfies the first order ODE
    $$\frac{\d }{\d x}f_k (x)=k f_{k-1}(x)-\beta f_k(x).$$
\end{Remark}

The curious reader can also adapt the approach introduced by Beylkin and Monz\'on in \cite{Beylkin} to exponential sums with real coefficients and exponents. This method might have the advantage of providing a more accurate approximation as it also optimises on the decay parameters $(\beta_k)_{k \ge 1}$.\\

To illustrate the Markov approximation discussed above, we consider the following modified Hawkes-Ornstein-Uhlenbeck SDE
\begin{equation}
    \label{eq:Hawkes_OE}
    \begin{cases}
        \d X_t & = -\mu X_t \d t+\sigma \d W_t + \gamma(X_{t-})\int_{\R_+}\boldsymbol{ 1} _ {\theta \le \lambda_t} \Pi (\d t, \d \theta) \\
        \lambda_t &= \lambda_\infty + \psi \left(\int_0^{t-}\int_{\R_+}\phi(t-s) \nu(X_s)\boldsymbol{1}_{\theta \leq \lambda_s} \Pi(\d s , \d \theta)\right)
    \end{cases}
\end{equation}
where $\mu=0.5$, $\sigma=1$, $\gamma(x)=-(40x)/(1+16x^2)$ ,$\lambda_\infty=1$, $\psi(x)=(x)_+\wedge 7$ and $\nu(x)=0.2+0.8\exp(-0.1 x^2)$. 
The kernel is chosen to be 
$$\phi(t)=\frac{1-t}{(1+t^{2.5})},$$
which is not monotonous (\textit{a fortiori} not completely monotonous) and exhibits both excitation and inhibition as can be seen in Figure \ref{fig:kernel}. This kernel is then approximated by a linear combination of $2$ and $3$ exponentials. First, we choose the decay parameter $\beta =0.5$ which gurantees that the exponential functions become negligible for $t\ge 10$ (we assume that any quantity below $e^{-5}$ is negligible).  We then approximate $\phi$ using the methodology explained above, yielding 
$$\phi^{(2)}=-1.16 e^{-0.5t}+2.17e^{-t}$$
and 
$$\phi^{(3)}(t)=-0.82 e^{-0.5 t}+0.58e^{-t}+1.39e^{1.5t} .$$

\begin{figure}[h!]
    \centering
    \includegraphics[width=0.8\linewidth]{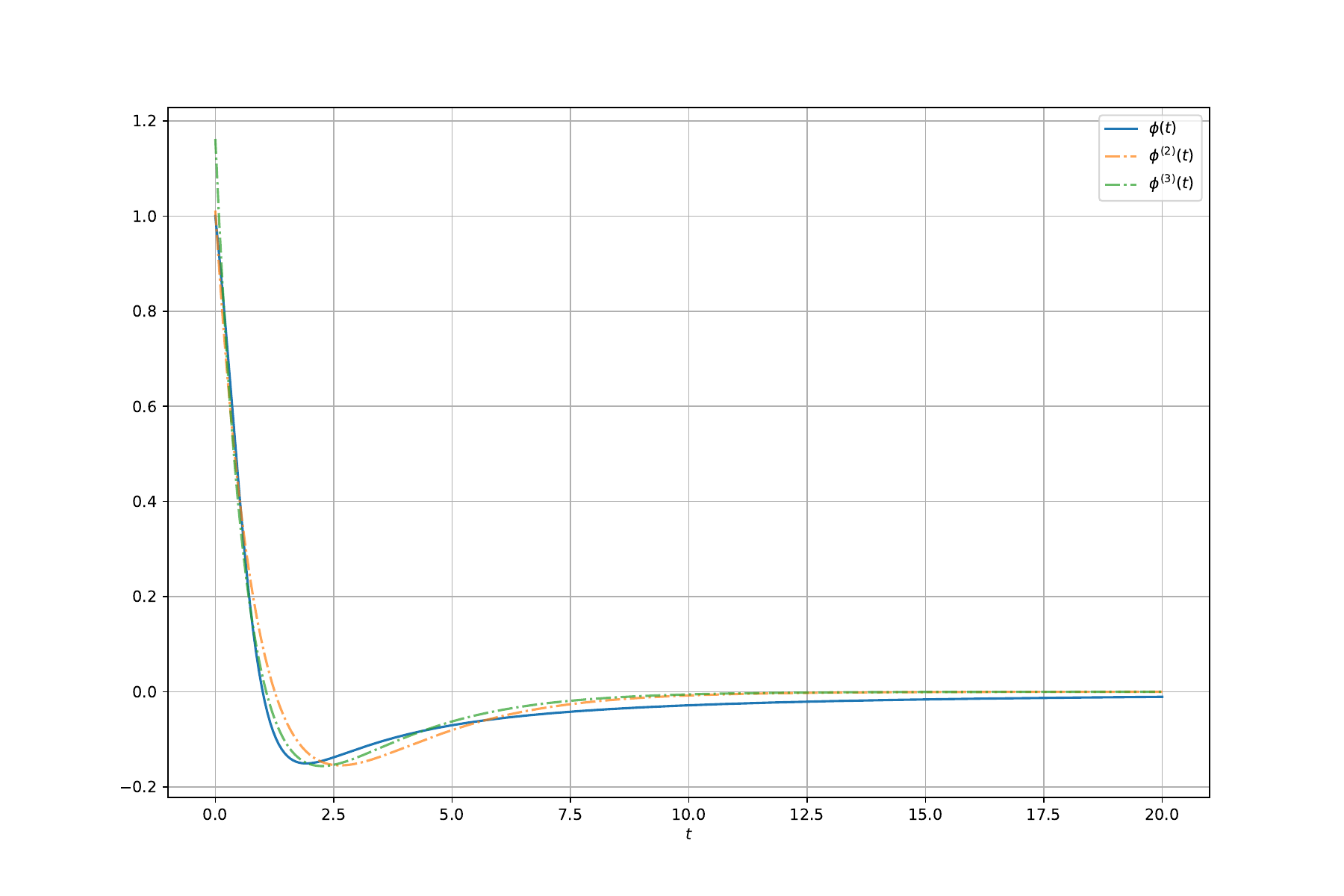}
    \caption{$\phi^{(3)}$ does a better job at approximating $\phi$ than $\phi^{(2)}$. Both approximations fail at capturing the heavy polynomial tail, but this is not a problem given that we are interested in approximation on finite horizons.}
    \label{fig:kernel}
\end{figure}

We point out that the first two coefficients of $\phi^{(2)}$ are not preserved in $\phi^{(3)}$, due to the lack of orthogonality. \\
Given a Brownian motion $W$ and a Poisson measure $\Pi$ we simulate three paths $(X,\lambda)$, $(X^{(2)},\lambda^{(2)})$ and $(X^{(3)},\lambda ^{(3)})$ using the kernels $\phi$, $\phi^{(2)}$ and $\phi^{(3)}$ in the SDE \eqref{eq:Hawkes_OE} respectively. The paths are shown in Figure \ref{fig:xl}. 
\begin{figure}[h!]
    \centering
    \includegraphics[width=1\linewidth]{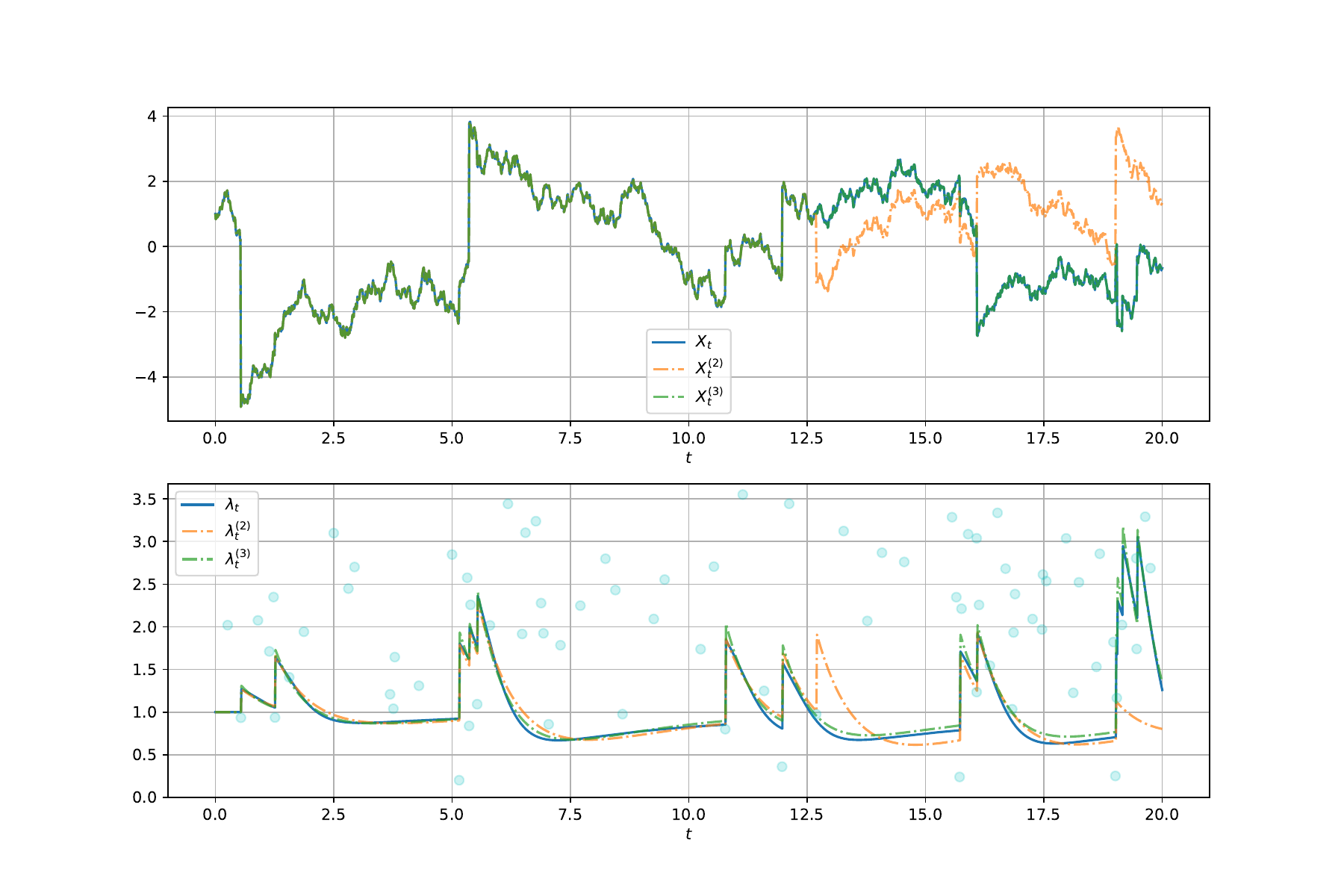}
    \caption{A realisation of $(X,\lambda)$ on $[0,T]$. The order 3 Markov approximation $X^{(3)}$ (in green) remains confounded with the original process $X$ (in blue). This is because $\lambda^{(3)}$ is close enough to $\lambda $ to accept and reject the exact same point of $\Pi$ (in cyan). The same cannot be said about the order 2 Markov approximation (in orange).}
    \label{fig:xl}
\end{figure}
\section{Application to stochastic control of Hawkes jump-diffusions}\label{sec:control}

\subsection{The control problem}

In this paragraph, we denote by $\mathbb{F} := \{\mathcal{F}_t\}_{0 \le t \le T}$ the filtration generated by $(W, \Pi)$.
Let $A$ be a Polish space. 
Given $\alpha : [0,T] \times \Omega \longrightarrow A$, we then consider the controlled version of the dynamics \eqref{eq:EDS}:
\begin{equation}
\label{eq:controlled-EDS}
    \begin{cases}
        X_t &= x_0 + \int_0^t \mu(s,X_s,\alpha_s) \d s + \int_0 ^t \sigma(s,X_s,\alpha_s) \d W_s + \int_0^t \int_{\R_+ \times \R} y\gamma(s,X_{s-},\alpha_{s-}) \boldsymbol{1}_{\theta \leq \lambda_s} \Pi (\d s, \d \theta, \d y)\\
        \lambda_t &= \lambda_{\infty}(t,X_{t-})+\psi \left(\int_0^{t-} \int_{\R_+ \times \R}\phi (t-s) b(y) \nu (s,X_{s-},\alpha_{s-})\boldsymbol{1}_{\theta \leq \lambda_s} \Pi (\d s, \d \theta ,\d y) \right),
    \end{cases}
\end{equation}
where we omit to write the dependence of $(X, \lambda)$ in $\alpha$ for notational simplicity. We denote by $\cA$ the set of $\mathbb{F}$-adapted processes taking their values in $A$ such that, for all $\varphi \in \{\mu, \sigma, \gamma, \nu\}$ and all control $\alpha \in \cA$, the mapping $(t, x, \omega) \mapsto \varphi(t, x, \alpha_t(\omega))$ satisfies Assumptions \ref{ass:Lipschitz}-\ref{ass:gronwall}. In particular, for all $\alpha \in \cA$, the equation \eqref{eq:controlled-EDS} admits a unique solution, $\mathbb{P}$-a.s., and we have:
\begin{equation}\label{eq:indep-of-alpha}
\E\left[ \sup_{t \in [0,T]} | X_T |^p \right] \le C_p \quad \mbox{for all} \ p \ge 1, \quad \mbox{with} \ C_p \ \mbox{independent of} \ \alpha.
\end{equation}
Now, given $f : [0,T] \times \mathbb{R}^d \times A \to \mathbb{R}$ and $g : \mathbb{R}^d \to \mathbb{R}$, we define the optimal control problem:
\begin{equation}\label{eq:control-pb}
V_0 := \sup_{\alpha \in \mathcal{A}} \mathbb{E}\Big[\int_0^T f(t, X_t, \alpha_t)\d t + g(X_T) \Big]
\end{equation}

As said in the Introduction, due to the possibly nonexponential kernel in the dynamics of the intensity $\lambda$, \eqref{eq:control-pb} is a Volterra-type control problem, and therefore cannot be solved through the standard dynamic programming techniques. This motivates us to introduce a Markov approximation of $(X,\lambda)$.

\subsection{The Markov approximation}

Let $\phi^n(t) := \sum_{k=1}^n \eta_k e^{-\beta_k t}$ be such that $\lVert \phi^n - \phi \rVert_1 \to 0$ as $n \to +\infty$. We denote by $(X^n, \lambda^n)$ the controlled Hawkes jump-diffusion \eqref{eq:controlled-EDS} driven by the kernel $\phi^n$ instead of $\phi$. We consider the control problem:
\begin{equation}\label{eq:markov-control-pb}
V_0^n := \sup_{\alpha \in \mathcal{A}} \mathbb{E}\left[\int_0^T f(t, X_t^n, \alpha_t)\d t + g(X_T^n) \right]. 
\end{equation}
As seen in Section \ref{sec:main}, introducing the processes $(\xi^1, \dots, \xi_n)$ defined in \eqref{eq:auxiliary}, we have $$\lambda_t^n = \lambda_\infty(t, X_{t-}) + \psi\left( \sum_{k=1}^n \eta_k \xi_t^k \right),$$ and the dynamics $(X^n, \xi^1, \dots, \xi^n)$ is actually a Markov jump-diffusion, which allows us to analyze the control problem \eqref{eq:markov-control-pb} using the dynamic programming approach. In particular, under mild regularity assumptions, it is standard to verify that $V_0^n = v(0, X_0, 0, \dots, 0)$, where $v : [0,T] \times \R^{n+1} \longrightarrow \R$ is a viscosity solution to the Hamilton-Jacobi-Bellman equation:
\begin{align*}
    - \partial_t v(t, \boldsymbol{\xi}) - \mathcal{H}[v](t, x, \boldsymbol{\xi}) = 0, \quad v|_{t=T} = g,
\end{align*}
where:
\begin{align*}
   \mathcal{H}[v](t, x, \boldsymbol{\xi}) := \sup_{a \in A} \Big\{ &\mu(t, x, a)\partial_x v(t,x,\boldsymbol{\xi}) + \frac12 \sigma^2 (t,x,a) \partial_{xx}^2 v(t,x,\boldsymbol{\xi}) - (\boldsymbol{\beta \xi}) \cdot \nabla_{\boldsymbol{\xi}} v(t,x,\boldsymbol{\xi}) \\&+ \big(\lambda_\infty(t,x) + \psi(\boldsymbol{\eta} \cdot \boldsymbol{\xi})\big)\int_{\R} \big( v(t, x+\gamma(t,x,a)y, \boldsymbol{\xi} + \nu(t,x,a)b(y)\boldsymbol{1}) - v(t,x,\boldsymbol{\xi})\big)m(\d y) \Big\},
\end{align*}
with the notations $\boldsymbol{y} := (y_1, \dots, y_n) \in \R^n$ and $\boldsymbol{1} = (1, \dots, 1) \in \R^n$. We refer to \citeauthor{oksendal2007applied} \cite{oksendal2007applied} for more information on the control of Markov jump-diffusion processes. \\
We now state the main result of this paragraph:
\begin{Theorem}\label{thm:convergence-value-function}
Assume $f$ and $g$ are continuous with polynomial growth in $x$ of order $m \ge 1$, uniformly in $(t,a)$. Then we have:
$$ V_0^n \underset{n \to \infty}{\longrightarrow} V_0. $$
In particular, if $f$ is Lipschitz-continuous in $x$, uniformly in the other variables, and $g$ is Lipschitz-continuous, we have:
$$ |V_0^n - V_0 | \le \lVert \phi^n - \phi \rVert_1 C_1 e^{C_2 T} $$
for some nonnegative constants $C_1, C_2$. 
\end{Theorem}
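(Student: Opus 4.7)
The plan is to lift the pathwise continuity estimate of Theorem \ref{thm:continuity_X} to the level of expected rewards, uniformly over $\alpha \in \cA$, and then use the elementary inequality $|\sup_\alpha F_\alpha - \sup_\alpha G_\alpha| \le \sup_\alpha |F_\alpha - G_\alpha|$ to deduce convergence of the value functions. The key observation is that the constants appearing in Theorem \ref{thm:continuity_X} and Proposition \ref{prop:SDE} depend only on the Lipschitz and growth bounds of $(\mu, \sigma, \gamma, \nu)$, which by the very definition of $\cA$ are independent of $\alpha$. This yields, uniformly in $\alpha \in \cA$,
$$ \E\left[\sup_{t \in [0,T]} |X_t^n - X_t|\right] \le C_1 \|\phi^n - \phi\|_1 e^{C_2 T}, $$
together with the uniform moment estimate \eqref{eq:indep-of-alpha} applied to both $X$ and $X^n$.

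For the Lipschitz part of the statement, the bound is then immediate: for each $\alpha \in \cA$,
$$ \E\left[\left|\int_0^T (f(t, X_t^n, \alpha_t) - f(t, X_t, \alpha_t))\d t + g(X_T^n) - g(X_T)\right|\right] \le (L_f T + L_g)\, \E\left[\sup_{t \in [0,T]} |X_t^n - X_t|\right], $$
and taking supremum over $\alpha$ (and relabelling constants) produces the announced quantitative estimate on $|V_0^n - V_0|$.

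For the general polynomial-growth case, the strategy is a truncation argument. Fix $M > 0$ and decompose $\E[|g(X_T^n) - g(X_T)|] = A_M + B_M$, with $A_M$ the contribution of $\{|X_T^n| \le M, |X_T| \le M\}$ and $B_M$ its complement. The polynomial growth of $g$ of order $m$, combined with Cauchy--Schwarz and the uniform higher-moment bounds from Proposition \ref{prop:SDE} (applied with some exponent $p > m$), gives $B_M \le K/M^q$ for some $q > 0$, uniformly in $n$ and $\alpha$; choosing $M$ large enough makes this arbitrarily small. On the bounded event, uniform continuity of $g$ on $[-M, M]$ together with Markov's inequality gives, for any prescribed $\delta > 0$ and the associated modulus $\eta = \eta(\delta, M)$,
$$ A_M \le \delta + 2 \sup_{|x| \le M} |g(x)| \cdot \frac{\E[|X_T^n - X_T|]}{\eta}, $$
and the second term vanishes as $n \to \infty$, uniformly in $\alpha$, by Theorem \ref{thm:continuity_X}. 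The running-cost term is handled identically, and the two estimates combined imply $V_0^n \to V_0$.

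The main subtlety I anticipate is ensuring that the modulus-of-continuity estimate for $f$ on $[0,T] \times [-M,M]$ is uniform in $a \in A$, which is what makes the bound on the $f$-analogue of $A_M$ uniform in $\alpha$. This is the natural reading of the hypothesis "continuous with polynomial growth in $x$, uniformly in $(t, a)$" and is automatic, e.g., whenever $A$ is compact or whenever $f$ admits a modulus of continuity in $x$ controlled by $a$-free polynomial data.
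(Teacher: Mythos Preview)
Your proposal is correct and follows essentially the same route as the paper: lift Theorem~\ref{thm:continuity_X} to expected rewards uniformly in $\alpha$ via $|\sup F - \sup G| \le \sup |F-G|$, handle the Lipschitz case directly, and treat the polynomial-growth case by a truncation argument combining uniform continuity on compacts, Markov's inequality, and the uniform moment bounds~\eqref{eq:indep-of-alpha}. The only packaging difference is that the paper first reduces to $f=0$ and then performs a two-layer truncation (truncate $g$ to $g^M = g \wedge M \vee (-M)$, then inside the bounded case truncate the domain to a ball $B_R$), whereas you do a single domain truncation on $\{|X_T|\le M,\,|X_T^n|\le M\}$ directly for the polynomial-growth $g$; your version is slightly more streamlined but relies on the same ingredients, and your flagged subtlety about the modulus of continuity of $f$ being uniform in $(t,a)$ is exactly the point the paper sidesteps by assuming $f=0$ without loss of generality.
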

\proof 
The proof can be found in Section \ref{sec:thmcvgvalue}.
\qed 

\subsection{Example: portfolio optimization in markets with contagion}

We revisit the study of \citeauthor{ait2015portfolio} \cite{ait2015portfolio}, who analyzed the problem of optimization a portfolio driven by a Hawkes jump-diffusion process. In their framework, the intensity of the Hawkes process is driven by an exponential kernel, which induces a Markov dynamics. We presently discuss the case of a general nonexponential kernel. 

\paragraph*{General utility.} Consider a risky asset $S$ driven by the following dynamics:
$$ \frac{\d S_t}{S_{t-}} = \mu \d t + \sigma \d W_t + \gamma \d N_t,
$$
where $\d N_t = \int_{\R_+} \boldsymbol{1}_{\{ \theta \le \lambda_t \}}  \Pi(\d t, \d\theta)$ and the intensity $\lambda$ satisfies:
\begin{equation}\label{eq:intensity}
 \lambda_t = \lambda_0 + \int_0^{t-} \phi(t-s)\d N_s, 
 \end{equation}
where $W$ is a standard Brownian motion, $\Pi$ an independent Poisson measure and $\phi \in \mathbb{L}^1(\R_+)$. An investor holds a proportion $\omega_t \in [0,1]$ of his capital in the risky asset $S$ at time $t$. The remaining proportion $1-\omega_t$ is invested in a risk-free asset, which evolves according to a constant interest rate $r$. At time $t$, the investor also consumes an amount $c_tX_t \d t$ of his capital. His wealth is then represented by the dynamics:
\begin{equation}\label{eq:wealth}
    \frac{\d X_t}{X_{t-}} = (r + (\mu-r) \omega_t  -c_t) \d t + \sigma \omega_t  \d W_t + \gamma \omega_t \d N_t, \quad X_0 > 0. 
\end{equation}
We denote by $\cA$ the set of $\mathbb{F}$-predictable processes $(c, \omega)$ such that $c_t > 0$ and $\omega_t \in [0,1]$ a.s. for all $t \in [0,T]$. The investor wants to choose his consumption $c$ and his investment strategy $\omega$ in order to solve:
\begin{equation}\label{eq:investor-pb}
    V_0 := \sup_{(c, \omega) \in \cA} \E\Big[\int_0^\infty e^{-\rho t} U(c_t X_t)\d t\Big],
\end{equation}
with $\rho > 0$ and $U : \R_+ \to \R$ some utility function. Since $\phi$ is a general nonexponential kernel, the above control problem is path-dependent, and therefore dynamic programming cannot be applied directly. According to Theorem \ref{thm:approximation}, we then introduce a sequence of kernels $\{\phi^n\}_{n \ge 0}$ such that:
$$ \lVert \phi^n - \phi \rVert_1 \underset{n \to \infty}{\longrightarrow} 0 \quad \mbox{and} \quad \phi^n(t) = \sum_{k=1}^n \eta_k e^{-\beta_k t}. $$
Denote by $X^n$ the solution of \eqref{eq:wealth} corresponding to the kernel $\phi^n$, and introduce the processes
$$ \xi_t^i = \int_0^{t-} \int_{\R_+} e^{-\beta_i (t-s)} \boldsymbol{1}_{\{\theta \le \lambda_s^n\}} \Pi(\d s, \d\theta), \quad 1 \le i \le n,  $$
with $\lambda_t^n $ solution to \eqref{eq:intensity} with kernel $\phi^n$. Then, the system $(X^n, \xi^1, \dots, \xi^n)$ satisfies the Markovian jump-diffusion dynamics:
\begin{equation}\label{eq:approximated-dynamics}
\left\{\begin{array}{ll}
\frac{\d X_t^n}{X_{t-}^n} = (r + (\mu-r) \omega_t  -c_t) \d t + \sigma \omega_t  \d W_t + \gamma \omega_t \d N_t^n, \quad X_0^n = X_0, \\
\d\xi_t^i =  - \beta_i \xi_t^i \d t + \d N_t^n, \quad \xi_0^i = 0, \quad 1 \le i \le n,
\end{array}\right.
\end{equation}
with $N^n$ a Poisson process of intensity $\lambda_t^n = \lambda_0 + \sum_{k=1}^n \eta_k \xi_t^k$. The corresponding investor's problem writes:
\begin{equation}\label{eq:investor-pb-approximated}
    V_0^n := \sup_{(c, \omega) \in \cA} \E\Big[\int_0^\infty e^{-\rho t} U(c_t X_t^n)\d t\Big].
\end{equation}
Since $(X^n, \xi^1, \dots, \xi^n)$ is Markov, we formally have $V_0 = v(X_0^n, 0, \dots, 0)$, with $v : \R^{n+1} \to \R$ solution to the dynamic programming equation:
\begin{equation}\label{eq:investor-DPE}
     - \mathcal{H}[v](x,\boldsymbol{\xi}) = 0,
\end{equation}
where:
\begin{align*}
\mathcal{H}[v](x,\boldsymbol{\xi}) := \sup_{c \ge 0, \omega \in [0,1]} \Big\{& (r + (\mu-r) \omega  - c)x \partial_x v(x,\boldsymbol{\xi})  - (\boldsymbol{\beta} \boldsymbol{\xi}) \cdot \nabla_\xi v(x,\boldsymbol{\xi}) + \frac12 \sigma^2 \omega^2 x^2 \partial_{xx}^2 v(x,\boldsymbol{\xi})  - \rho v(x,\boldsymbol{\xi}) \\&+ U(cx) + (\lambda_0 + \boldsymbol{\eta} \cdot \boldsymbol{\xi})\big[v(x + \gamma \omega x, \boldsymbol{\xi} + \boldsymbol{1}) - v(x, \boldsymbol{\xi}) \big] \Big\}.
\end{align*}

\paragraph*{Log utility.} We now set $U := \log$. We first observe that $V_0 < \infty$. Indeed, using the fact that
$$ X_t = X_0 \exp\Big(\int_0^t [r + (\mu-r)\omega_s - \frac12 \sigma^2 (\omega_s)^2 - c_s)]\d s + \sigma \omega_s \d W_s + \gamma \omega_s \d N_s  \Big), \quad \mbox{$\mathbb{P}$-a.s.,} $$
we see that:
\begin{align*}
    V_0 \le& \sup_{c \ge 0} \E\Big[\int_0^\infty e^{-\rho t} \big( \log(c_t) - \int_0^t c_s \d s \big)\d t \Big] + \sup_{\omega \in [0,1]} \E\Big[\int_0^\infty e^{-\rho t} \Big( \int_0^t \big( r + (\mu-r)\omega_s - \frac12 \sigma^2 (\omega_s)^2 \big) \d s + \gamma \omega_s \d N_s\Big)\d t \Big] \\
    \le& \sup_{c \ge 0} \E\Big[\int_0^\infty e^{-\rho t} c_t e^{- \int_0^t c_s \d s} \d t \Big] + \int_0^\infty e^{-\rho t}\left( C_1 t + C_2 \E\left[\int_0^t \lambda _s \d s \right] \right)\d t \\
    \le& \sup_{c \ge 0} \E\Big[1 - \int_0^\infty \rho e^{-\rho t} e^{- \int_0^t c_s \d s} \d t \Big] + \int_0^\infty e^{-\rho t} C_3 t\d t < \infty,
\end{align*}
for some constants $\{C_i\}_{1 \le i \le 3}$, where we used the integration by parts formula and Lemma \ref{lmm:moments_lambda}.
As in \cite{ait2015portfolio}, we look for a solution to \eqref{eq:investor-DPE} under the form:
$$ v(x, \boldsymbol{\xi}) = f(\boldsymbol{\xi}) + K \log(x),$$
where $f : \R^n \to \R$ and $K \in \R$ have to be determined. Compute:
\begin{align*}
    \partial_x v(x,\boldsymbol{\xi}) = \frac{K}{x}, \quad \partial_{xx}^2 v(x,\boldsymbol{\xi})) =-\frac{K}{x^2}, \quad \nabla_{\boldsymbol{\xi}} v(x, \boldsymbol{\xi}) = \nabla_{\boldsymbol{\xi}}  f(\boldsymbol{\xi}), \\
    v(x + \gamma \omega x, \boldsymbol{\xi} + \boldsymbol{1}) - v(x, \boldsymbol{\xi}) = K \log(1 + \gamma \omega) + f(\boldsymbol{\xi} + \boldsymbol{1}) - f(\boldsymbol{\xi}),
\end{align*}
and plug it in \eqref{eq:investor-DPE}:
\begin{align*}
      &\sup_{\omega \in [0,1]} \Big\{  (\mu-r) \omega x \partial_x v(x,\boldsymbol{\xi})  + \frac12 \sigma^2 \omega^2 x^2 \partial_{xx}^2 v(x,\boldsymbol{\xi}) + (\lambda_0 + \boldsymbol{\eta} \cdot \boldsymbol{\xi})\big[v(x + \gamma \omega x, \boldsymbol{\xi} + \boldsymbol{1}) - v(x, \boldsymbol{\xi}) \big] \Big\} \\
      &+ \sup_{c \ge 0} \Big\{ -cx\partial_x v(x, \boldsymbol{\xi}) + U(cx) \Big\} + rx\partial_x v(x, \boldsymbol{\xi}) - \rho v(x, \boldsymbol{\xi}) -(\boldsymbol{\beta} \boldsymbol{\xi}) \cdot \nabla_\xi v(x,\boldsymbol{\xi}) \\
      &= \psi(\boldsymbol{\eta} \cdot \boldsymbol{\xi}) + (\lambda_0 + \boldsymbol{\eta}\cdot \boldsymbol{\xi})(f(\boldsymbol{\xi} + \boldsymbol{1}) - f(\boldsymbol{\xi})) + \sup_{c \ge 0} \Big\{ -cK + U(cx) \Big\} + rK - \rho v(x, \boldsymbol{\xi}) -(\boldsymbol{\beta} \boldsymbol{\xi}) \cdot \nabla_\xi f(\boldsymbol{\xi}) = 0,
\end{align*}
with
\begin{align*}
    \psi(\boldsymbol{\eta} \cdot \boldsymbol{\xi}) :=  K\sup_{\omega \in [0,1]} \Big\{ (\mu-r) \omega - \frac12 \sigma^2 \omega^2 + (\lambda_0 + \boldsymbol{\eta} \cdot \boldsymbol{\xi})\log(1 + \gamma \omega) \Big\}.
\end{align*}
Note that he maximum in $c$ is reached in $c = 1/K$, and that all the terms in $x$ cancel in \eqref{eq:investor-DPE} if and only if $K = \frac{1}{\rho}$. Then we obtain:
$$ - \rho f(\boldsymbol{\xi}) -\boldsymbol{\beta \xi} \cdot \nabla_{\boldsymbol{\xi}} f(\boldsymbol{\xi}) + (\lambda_0 + \boldsymbol{\eta}\cdot \boldsymbol{\xi})(f(\boldsymbol{\xi} + \boldsymbol{1}) - f(\boldsymbol{\xi})) = 1 - \frac{r}{\rho} -\log(\rho) - \psi(\boldsymbol{\eta} \cdot \boldsymbol{\xi}). $$
Observe now that the left hand side of the above equation corresponds to the infinitesimal generator of the $n$-dimensional process $\boldsymbol{\xi}_t = (\xi_t^1, \dots, \xi_t^n)$ defined by the dynamics \eqref{eq:approximated-dynamics}. Therefore, by Feynman-Kac formula, we have the probabilistic representation:
$$ f(\boldsymbol{\xi}) = \frac{1}{\rho}\Big(1 - \frac{r}{\rho} - \log(\rho)\Big) -\int_0^\infty e^{-\rho t} \E\Big[ \psi(\boldsymbol{\eta} \cdot \boldsymbol{\xi}_t) | \boldsymbol{\xi}_0 = \boldsymbol{\xi} \Big]\d t, $$
and therefore:
\begin{align*}
V_0^n =& \frac{1}{\rho}\Big( \log(X_0^n) + 1 - \frac{r}{\rho} - \log(\rho)\Big) -\int_0^\infty e^{-\rho t} \E\Big[ \psi(\boldsymbol{\eta} \cdot\boldsymbol{\xi}_t) \Big]\d t  \\
=& \frac{1}{\rho}\Big( \log(X_0) + 1 - \frac{r}{\rho} - \log(\rho)\Big) -\int_0^\infty e^{-\rho t} \E\Big[ \psi(\lambda_t^n - \lambda_0) \Big]\d t
\end{align*}
By Theorem \ref{thm:approximation}, we have:
$$ V_0^n \underset{n \to \infty}{\longrightarrow} \frac{1}{\rho}\Big( \log(X_0) + 1 - \frac{r}{\rho} - \log(\rho)\Big) -\int_0^\infty e^{-\rho t} \E\Big[ \psi(\lambda_t - \lambda_0) \Big]\d t $$
The last result to prove is that $V_0^n$ indeed converges to $V_0$ as $n \to \infty$. 

\paragraph*{Convergence of the value function.} Theorem \ref{thm:convergence-value-function} can not be applied directly as the control problems \eqref{eq:investor-pb} and \eqref{eq:investor-pb-approximated} are formulated in infinite horizon. However, we have, for all $T > 0$:
\begin{align*}
V_0 \le&  \sup_{(c, \omega) \in \cA} \E\Big[\int_0^T e^{-\rho t} U(c_t X_t)\d t\Big]  + e^{-\rho T} \sup_{(c, \omega) \in \cA} \E \Big[\int_T^\infty e^{-\rho(t-T)} U(c_t X_t)\d t\Big],
\end{align*}
We see that both suprema are finite by the same arguments implying the finiteness of $V_0$. Therefore, all $\varepsilon > 0$, we may find $T_\varepsilon > 0$ such that 
\begin{equation}\label{eq:ineq-example-1}
 V_0 \le V_0^\varepsilon + \varepsilon,
 \end{equation}
where
\begin{equation}\label{eq:finite-horizon}
   V_0^\varepsilon := \sup_{(c, \omega) \in \cA} \E\Big[\int_0^{T_\varepsilon} e^{-\rho t} U(c_t X_t)\d t\Big].
\end{equation}
As for $V^n$, we know there exist an optimal control $(c^n, \omega^n)$ such that $c^n_t = \rho$. Therefore, denoting $\hat X$ the optimally controlled dynamics:
\begin{align*}
    V_0^n \le&  \E\Big[\int_0^T e^{-\rho t} U(\rho \hat X_t^n) \d t\Big]  + e^{-\rho T} \E\Big[\int_T^\infty e^{-\rho (t-T)} U(\rho \hat X_t^n)\d t\Big] \\
    \le& \sup_{(c, \omega) \in \cA} \E\Big[\int_0^T e^{-\rho t} U(c_t X_t^n)\d t\Big] + e^{-\rho T} \E\Big[\int_T^\infty e^{-\rho (t-T)} U(\rho \hat X_t^n)\d t\Big]
\end{align*}
By Theorem \ref{thm:approximation}, for any control $(c, \omega)$, $X^n$ converges to $X$ in $\mathbb{L}^1$. In particular, this implies that the family $\{ \hat X_t^n \}_{n \ge 0}$ is uniformly integrable for all $t \ge 0$. Since $U$ has linear growth, we may conclude that there exists $T_\varepsilon$ (independent of $n$) such that:
\begin{equation}\label{eq:ineq-example-2}
    V_0^n \le V_0^{n,\varepsilon} + \varepsilon,
\end{equation}
where 
\begin{equation}\label{eq:finite-horizon2}
   V_0^{n,\varepsilon} := \sup_{(c, \omega) \in \cA} \E\left[\int_0^{T_\varepsilon} e^{-\rho t} U(c_t X_t^n)\d t\right].
\end{equation}
Now, combining \eqref{eq:ineq-example-1} and \eqref{eq:ineq-example-2} with the natural inequalities $V_0^\varepsilon \le V_0$ and $V_0^{n,\varepsilon} \le V_0^n$, we obtain:
\begin{align*}
|V_0^n - V_0 | \le |V_0^{\varepsilon,n} - V_0^\varepsilon | + |V_0^{\varepsilon} - V_0 | + |V_0^{\varepsilon, n} - V_0^n | \le |V_0^{\varepsilon,n} - V_0^\varepsilon |  + \frac{2\varepsilon}{3}.
\end{align*}
Now, since the control problems \eqref{eq:finite-horizon} and \eqref{eq:finite-horizon2} have a finite horizon, by Theorem \ref{thm:convergence-value-function} we have $|V_0^{\varepsilon,n} - V_0^\varepsilon | \to 0$ as $n \to \infty$. Then, for $n$ sufficiently large, we have $|V_0^n - V_0 | \le \varepsilon$.

\paragraph*{Conclusion.} We have $V_0^n \to V_0$ as $n \to \infty$, and therefore:
$$ V_0 = \frac{1}{\rho}\Big( \log(X_0) + 1 - \frac{r}{\rho} - \log(\rho)\Big) -\int_0^\infty e^{-\rho t} \E\Big[ \psi(\lambda_t - \lambda_0) \Big]\d t, $$
with optimal consumption policy $C^*_t = \rho$ and optimal investment policy satisfying:
$$ \omega_t^* \in \frac{1}{\rho}\underset{\omega \in [0,1]}{\mathrm{argmax}} \Big\{ (\mu-r) \omega - \frac12 \sigma^2 \omega^2 + (\lambda_t)\log(1 + \gamma \omega) \Big\}. $$

\subsection{Numerical approach}

We briefly discuss an algorithmic approach to approximate the value function $V_0$ of the optimal control problem \eqref{eq:control-pb}. Our methods consists in the following steps:
\begin{enumerate}
    \item We first approximate $V_0$ by $V_0^n$, the value function of the Markov control problem \eqref{eq:markov-control-pb}.
    \item We formulate a discrete time version of the control problem \eqref{eq:markov-control-pb}.
    \item We compute the value function of the discrete time problem through a backwards algorithm based on the dynamic programming principle.
\end{enumerate}
For the Step 3, it is important to observe that the possibly high dimension of the state process of the Markov problem represents a computational challenge. We may then combine the dynamic programming approach with neural network approximations, similarly to the "HybridNow" algorithm of \citeauthor{hure2021deep} \cite{hure2021deep}. 

\section{Proofs of the main results}\label{sec:proofs}

\subsection{Proof of Proposition \ref{prop:SDE}}
\label{sec:propSDE}
The proof is an adaptation of a classical contraction argument to the $\mathbb L^1$ setting, following the proof of Theorem 1.2 in \cite{graham}. \\
    Let $ \mathcal E$ be the space of paths $(X,\lambda)$ on $[0,T]$ such as $X$ (respectively $\lambda$) is adapted (respectively predictable) to the filtration $\mathbb F$ such that 
    $$\E \left[\sup_{t\in[0,T]} |X_t|\right]+\E \left[\sup_{t\in[0,T]} |\lambda_t|\right] <+\infty.$$
    Let $f$ be the function that maps a process $(X,\lambda) \in \mathcal E$ to the  process $(Z,\kappa) \in \mathcal E$ defined by 
    \begin{equation*}
    \begin{cases}
        Z_t &= x_0 + \int_0^t \mu(s,X_s) \d s + \int_0 ^t \sigma(s,X_s) \d W_s + \int_0^t \int_{\R_+ \times \R} y\gamma(s,X_{s-}) \boldsymbol{1}_{\theta \leq \lambda_s} \Pi (\d s, \d \theta, \d y)\\
        \kappa_t &=  \lambda_{\infty}(t,X_{t-})+ \psi \left(\int_0^{ t-} \int_{\R_+ \times \R}\phi (t-s) b(y) \nu (s,X_{s-})\boldsymbol{1}_{\theta \leq \lambda_s} \Pi (\d s, \d \theta ,\d y) \right)
    \end{cases}.
\end{equation*}
Given two processes $(X,\lambda)$ and $(X',\lambda')$ and their images $(Z,\kappa)$ and $(Z',\kappa')$ we have that 
\begin{align*}
    \E \left[\sup_{t\in [0,T]}|\kappa'_t-\kappa_t|\right] \leq &\E \left [\sup_{t\in[0,T]} \left|\lambda_{\infty}(t,X'_t)-\lambda_\infty(t,X_t) \right|\right] \\
    &+ L\E \left[\sup_{t\in [0,T]} \left| \int_0^{t-} \int_{\R_+ \times \R} \phi(t-s)b(y)\left(\nu (s,X'_{s-})\boldsymbol{1}_{\theta \leq \lambda'_s} -\nu (s,X_{s-})\boldsymbol{1}_{\theta \leq \lambda_s} \right) \Pi (\d s, \d \theta ,\d y)\right|\right]\\
    \leq&L_\lambda\E \left [\sup_{t\in[0,T]} \left|X'_t-X_t \right|\right] \\&+ L\E \left[\sup_{t\in [0,T]} \int_0^{t-} \int_{\R_+ \times \R}  \left|\phi(t-s)b(y)\left(\nu (s,X'_{s-})\boldsymbol{1}_{\theta \leq \lambda'_s} -\nu (s,X_{s-})\boldsymbol{1}_{\theta \leq \lambda_s} \right) \right|\Pi (\d s, \d \theta ,\d y)\right],\\
\end{align*}
where $L$ is the Lipschitz constant of $\psi$. Adding and subtracting $\nu(s,X_{s}')\boldsymbol{1}_{\theta \leq \lambda_s}$ in the last inequality we have
\begin{align*}
    \E \left[\sup_{t\in [0,T]}|\kappa'_t-\kappa_t|\right] \leq &L_\lambda\E \left [\sup_{t\in[0,T]} \left|X'_t-X_t \right|\right] \\&+ L\E \left[\sup_{t\in [0,T]} \int_0^{t-} \int_{\R_+ \times \R}  \left|\phi(t-s)b(y)\nu (s,X'_{s-})\left(\boldsymbol{1}_{\theta \leq \lambda'_s} -\boldsymbol{1}_{\theta \leq \lambda_s} \right) \right|\Pi (\d s, \d \theta ,\d y)\right]\\
    &+L\E \left[\sup_{t\in [0,T]} \int_0^{t-} \int_{\R_+ \times \R}  \left|\phi(t-s)b(y)\left(\nu (s,X'_{s-}) -\nu (s,X_{s-})\right)\boldsymbol{1}_{\theta \leq \lambda_s}  \right|\Pi (\d s, \d \theta ,\d y)\right].\\
    \end{align*}
    And since $\phi$ is assumed to be bounded,
    \begin{align*}     
   \E \left[\sup_{t\in [0,T]}|\kappa'_t-\kappa_t|\right]  \leq & L_\lambda \E \left [\sup_{t\in[0,T]} \left|X'_t-X_t \right|\right] \\&+ L\E \left[ \int_0^{T} \int_{\R_+ \times \R}  \left| \left \|\phi\right\|_\infty
b(y)\nu (s,X'_{s-})\left(\boldsymbol{1}_{\theta \leq \lambda'_s} -\boldsymbol{1}_{\theta \leq \lambda_s} \right) \right|\Pi (\d s, \d \theta ,\d y)\right]\\
    &+L\E \left[\int_0^{T} \int_{\R_+ \times \R}  \left| \|\phi\|_{\infty} b(y)\left(\nu (s,X'_{s-}) -\nu (s,X_{s-})\right)\boldsymbol{1}_{\theta \leq \lambda_s}  \right|\Pi (\d s, \d \theta ,\d y)\right].\\
\end{align*}
Since $\d s \d \theta m(\d y)$ is the compensator is $\Pi(\d s, \d \theta, \d y)$ and $\nu$ is bounded, we have that
\begin{align*}
    \E \left[\sup_{t\in [0,T]}|\kappa'_t-\kappa_t|\right] \leq & L_\lambda\E \left [\sup_{t\in[0,T]} \left|X'_t-X_t \right|\right] \\ &+ L\E \left[ \int_0^{T} \int_{\R_+ \times \R}  \left|\|\phi\|_\infty b(y)\nu (s,X'_{s-})\left(\boldsymbol{1}_{\theta \leq \lambda'_s} -\boldsymbol{1}_{\theta \leq \lambda_s} \right) \right|\d s \d \theta m(\d y)\right]\\
    &+L\E \left[\int_0^{T} \int_{\R_+ \times \R}  \left| \|\phi\|_\infty b(y)\left(\nu (s,X'_{s-}) -\nu (s,X_{s-})\right)\boldsymbol{1}_{\theta \leq \lambda_s}  \right|\d s \d \theta m(\d y)\right]\\
    \leq &  L_\lambda\E \left [\sup_{t\in[0,T]} \left|X'_t-X_t \right|\right] + L\E[b(Y)]\E \left[ \int_0^{T-}   \left|\|\phi\|_\infty \nu (s,X'_{s-}) \right|\left| \lambda'_s - \lambda_s \right|\d s\right]\\
    &+L\E[b(Y)]\E \left[\int_0^{T-} \left|\|\phi\|_\infty \left(\nu (s,X'_{s-}) -\nu (s,X_{s-})\right)\lambda_s \right|\d s\right]\\
    \leq & L_\lambda\E \left [\sup_{t\in[0,T]} \left|X'_t-X_t \right|\right]+C \E\left( \int_0^{T}   \left|\|\phi\|_{\infty}\right|\left| \lambda'_s - \lambda_s \right| \d s+\int_0^{T} \left|\|\phi\|_{\infty}\left(\nu (s,X'_{s}) -\nu (s,X_{s})\right)\lambda_s \right|\d s\right),
\end{align*}
where $C$ is a non-negative constant. Since Assumption \ref{ass:gronwall} is in force, the second integral in the last inequality is either 
\begin{enumerate}
    \item Bounded from above by $C\int_0^T \|\phi\|_{\infty}|X'_s-X_s|\d s$ if the jump rate $\psi$ is bounded. 
    \item Exactly equal to zero if $\nu$ does not depend on $x$.
\end{enumerate}
Hence, we have that 
\begin{equation}
    \label{ineq:kappa}
    \E \left[\sup_{t\in [0,T]}|\kappa'_t-\kappa_t|\right] \leq   \left(CT\E\left[\sup_{t\in [0,T]}|\lambda'_t-\lambda_t|\right]+(L_\lambda +CT)\E \left[\sup_{t\in [0,T]}|X'_t-X_t|\right]\right).
\end{equation}
We now examine the first component $Z$. Using the triangle inequality, the Bukholder-Davis-Gundy inequality and the Lipschitz property of the coefficients we have that
 \begin{align*}
     \E \left[\sup_{t\in[0,T]} |Z'_t-Z_t|\right]\leq &  \E\left[\sup_{t\in [0,T]}\left|\int_0^t \mu(s,X'_s) -\mu(s,X_s) \d s \right|\right]+\E\left[\sup_{t\in [0,T]}\left|\int_0^t \sigma(s,X'_s) -\sigma(s,X_s) \d W_s \right|\right]\\
     &+\E \left[\sup_{t\in [0,T]}\left|\int_0^t \int_{\R_+ \times \R} y\left(\gamma(s,X'_{s-}) \boldsymbol{1}_{\theta \leq \lambda'_s}-\gamma(s,X_{s-}) \boldsymbol{1}_{\theta \leq \lambda_s} \right)\Pi (\d s, \d \theta, \d y) \right|\right]\\
     \leq & C \left(\E\left[\int_0^T |X'_s-X_s|\d s\right] + \E \left[\left(\int_0^T|X'_s-X_s|^2 \d s\right)^{1/2}\right]\right)\\
     &+ C \left(\E \left[\int_0^T |\gamma(s,X_s)||\lambda'_s-\lambda_s|\d s\right] + \E \left[\int_0^T |\gamma(s,X'_s)-\gamma(s,X_s)|\lambda'_s \d s\right]\right) .
 \end{align*}
 Since Assumption \ref{ass:gronwall} is in force, the last integral is either exactly equal to zero or is bounded from above by $\int_0^T |X'_s-X_s|\d s$, and given that $\gamma$ is bounded we have that 
 \begin{equation}
     \label{ineq:Z}
     \E \left[\sup_{t\in [0,T]}|Z'_t-Z_t|\right]\leq C (T+\sqrt T)\left(\E\left[\sup_{t\in [0,T]}|\lambda'_t-\lambda_t|\right]+\E \left[\sup_{t\in [0,T]}|X'_t-X_t|\right]\right).
 \end{equation}
 Since $L_\lambda<1$, by taking $T$ small enough, Inequalities \eqref{ineq:kappa} and \eqref{ineq:Z} yield that $f$ is a contraction for the $\mathbb L^1$ norm $$\|(X,\lambda)\|:=\E\left[\sup_{t\in [0,T]}|\lambda_t|\right]+\E\left[\sup_{t\in [0,T]}|X_t|\right],$$
 and thus $f$ has a unique fixed point which solves SDE \eqref{eq:EDS} on $[0,T]$.\\
 Now that $\lambda$ and $X$ are constructed on $[0,T]$, we use them as "initial values" for the construction on $[T,2T]$. That is, $\lambda'_t=\lambda_t$ and $X'_t=X_t$ on $[0,T]$. 
\begin{align*}
    \E \left[\sup_{t\in [T,2T]}|\kappa'_t-\kappa_t|\right] \leq &L_\lambda\E \left [\sup_{t\in[T,2T]} \left|X'_t-X_t \right|\right] \\&+ L\E \left[\sup_{t\in [T,2T]} \int_0^{t-} \int_{\R_+ \times \R}  \left|\phi(t-s)b(y)\nu (s,X'_{s-})\left(\boldsymbol{1}_{\theta \leq \lambda'_s} -\boldsymbol{1}_{\theta \leq \lambda_s} \right) \right|\Pi (\d s, \d \theta ,\d y)\right]\\
    &+L\E \left[\sup_{t\in [T,2 T]} \int_0^{t-} \int_{\R_+ \times \R}  \left|\phi(t-s)b(y)\left(\nu (s,X'_{s-}) -\nu (s,X_{s-})\right)\boldsymbol{1}_{\theta \leq \lambda_s}  \right|\Pi (\d s, \d \theta ,\d y)\right]\\
    \leq & L_\lambda \E \left [\sup_{t\in[T,2T]} \left|X'_t-X_t \right|\right] \\&+ L\E \left[ \int_0^{2T} \int_{\R_+ \times \R}  \left| \left \|\phi\right\|_\infty
b(y)\nu (s,X'_{s-})\left(\boldsymbol{1}_{\theta \leq \lambda'_s} -\boldsymbol{1}_{\theta \leq \lambda_s} \right) \right|\Pi (\d s, \d \theta ,\d y)\right]\\
    &+L\E \left[\int_0^{2T} \int_{\R_+ \times \R}  \left| \|\phi\|_{\infty} b(y)\left(\nu (s,X'_{s-}) -\nu (s,X_{s-})\right)\boldsymbol{1}_{\theta \leq \lambda_s}  \right|\Pi (\d s, \d \theta ,\d y)\right].\\
\end{align*}
Given that the processes $X$ and $X'$, and $\lambda$ and $\lambda'$ coincide over $[0,T]$ we have that 
\begin{align*}
     \E \left[\sup_{t\in [T,2T]}|\kappa'_t-\kappa_t|\right] \leq &L_\lambda \E \left [\sup_{t\in[T,2T]} \left|X'_t-X_t \right|\right] \\&+ L\E \left[ \int_T^{2T} \int_{\R_+ \times \R}  \left| \left \|\phi\right\|_\infty
b(y)\nu (s,X'_{s-})\left(\boldsymbol{1}_{\theta \leq \lambda'_s} -\boldsymbol{1}_{\theta \leq \lambda_s} \right) \right|\Pi (\d s, \d \theta ,\d y)\right]\\
    &+L\E \left[\int_T^{2T} \int_{\R_+ \times \R}  \left| \|\phi\|_{\infty} b(y)\left(\nu (s,X'_{s-}) -\nu (s,X_{s-})\right)\boldsymbol{1}_{\theta \leq \lambda_s}  \right|\Pi (\d s, \d \theta ,\d y)\right].\\
\end{align*}

Like we did for $[0,T]$, $\d s \d \theta m(\d y)$ is the compensator is $\Pi(\d s, \d \theta, \d y)$ and $\nu$ is bounded, we thus have that
 \begin{align*}
     \E \left[\sup_{t\in [T,2T]}|\kappa'_t-\kappa_t|\right] \leq &L_\lambda \E \left [\sup_{t\in[T,2T]} \left|X'_t-X_t \right|\right] \\
     &+ L\E [b(Y)]\left \|\phi\right\|_\infty\E \left[ \int_T^{2T}   
 \left| \lambda'_s - \lambda_s \right|\d s\right]\\
    &+L\E [b(Y)] \left \|\phi\right\|_\infty\E \left[\int_T^{2T}  \left| \left(\nu (s,X'_{s-}) -\nu (s,X_{s-})\right)\boldsymbol{1}_{\theta \leq \lambda_s}  \right|\d s\right],\\
    \end{align*}
    and therefore,
    \begin{align*}
   \E \left[\sup_{t\in [T,2T]}|\kappa'_t-\kappa_t|\right]  \leq &\left(CT\E\left[\sup_{t\in [T,2T]}|\lambda'_t-\lambda_t|\right]+(L_\lambda +CT)\E \left[\sup_{t\in [T,2T]}|X'_t-X_t|\right]\right).
 \end{align*}
 
Which yields the existence and uniqueness on $[T,2T]$. The construction on any interval of finite length follows immediately. 
 
 For the estimate on the $p-$th moment of $X$, introduce $\tau_M := \inf\{ t \ge 0 : |X_t| \ge M \}$, for $M \ge 0$, and denote $X^M := X_{\cdot \wedge \tau_M}$. We have that 
 \begin{align*}
     X_t^M =& x_0 + \int_0^{t \wedge \tau_M} \mu(s,X_s^M) \d s + \int_0 ^{t \wedge \tau_M} \sigma(s,X_s^M) \d W_s + \int_0^{t \wedge \tau_M} \int_{\R_+ \times \R} y\gamma(s,X_{s-}^M) \boldsymbol{1}_{\theta \leq \lambda_s} \Pi (\d s, \d \theta, \d y)\\
     =&x_0 + \int_0^{t \wedge \tau_M} \mu(s,X_s^M)+\E[Y]\gamma(s,X_s^M)\lambda_s \d s + \int_0 ^{t \wedge \tau_M} \sigma(s,X_s^M) \d W_s \\ &+ \int_0^{t \wedge \tau_M} \int_{\R_+ \times \R} y\gamma(s,X_{s-}^M) \boldsymbol{1}_{\theta \leq \lambda_s} \bar \Pi (\d s, \d \theta, \d y)\\
 \end{align*}
 where $\bar \Pi(\d s, \d \theta, \d y):=\Pi(\d s, \d \theta, \d y)-\d s \d \theta m(\d y) $ is the compensated Poisson measure. By using Hölder and Burkholder-Davis-Gundy inequalities (\textit{cf.} Theorem 4.4.23 in \cite{Applebaum}) as well as the linear growth of the coefficients with respect to $x$, we have that 
 \begin{align*}       \E\left[\sup_{s\in [0,t]} | X^M_s |^p \right] 
       \leq &  C_T\left(1 + \E\left[\int_0^t \sup_{r\in[0,s]}|X_r^M|^p \d s \right] +\E\left[ \int_0^T\lambda_s^p \d s+\int_0^T\lambda_s \d s\right]\right), 
 \end{align*}
 where $C_T$ is a positive constant that does not depend on $M$.
 We now by Lemma \ref{lmm:higher_moments_lambda} that $\E[\sup_{t \in [0,T]} \lambda_t^p ] < \infty.$ Then, by Gr\"onwall's Lemma, there exist two constants $c_1$ and $c_2$, independent of $M$, such that:
 $$\E\left[\sup_{t\in[0,T]} | X_t^M |^p \right]\leq C_T. $$
 We obtain the desired result by applying the monotone convergence theorem.

\subsection{Proof of Theorem \ref{thm:continuity_X_noX}}
    \label{sec:thmcontinuity_X_noX}
    
    By compensating the Poisson measure we can rewrite $X$ and $\tilde X$ under the form
    \begin{equation*}
    \begin{cases}
        X_t=& x_0 + \int_0^t \mu(s, X_s)+\E b(Y)\lambda_s \gamma(s,X_s) \d s + \int_0 ^t \sigma(s, X_s) \d W_s + \int_0^t \int_{\R_+ \times \R} y\gamma(s, X_{s-}) \boldsymbol{1}_{\theta \leq \lambda_s} \bar\Pi (\d s, \d \theta, \d y)\\
        \tilde X_t=& x_0 + \int_0^t \mu(s,\tilde X_s) +\E b(Y) \tilde\lambda_s \gamma(s,\tilde X_s)\d s + \int_0 ^t \sigma(s,\tilde X_s) \d W_s + \int_0^t \int_{\R_+ \times \R} y\gamma(s,\tilde X_{s-}) \boldsymbol{1}_{\theta \leq \tilde \lambda_s} \bar\Pi (\d s, \d \theta, \d y)\\
    \end{cases}
    \end{equation*}
    which by taking the absolute value of the difference yields
    \begin{align*}
        \left | \tilde X_t-X_t \right|\leq &\int_0^t\left |\mu(s,\tilde X_s) -\mu(s,X_s)\right| \d s + \E b(Y)\int_0^t\left| \tilde \lambda_s \gamma(s,\tilde X_s)-\lambda_s\gamma(s, X_s)\right| \d s + \left |\int_0^t \left(\sigma(s,\tilde X_s)-\sigma(s, X_s)\right)\d W_s\right|\\
        &+\left|\int_0^t \int_{\R_+ \times \R}y\left(\gamma(s, \tilde X_{s-})\boldsymbol{1}_{\theta \leq \tilde \lambda_s}-\gamma(s,X_s)\boldsymbol{1}_{\theta \leq \lambda_s}\right) \bar \Pi (\d s, \d \theta, \d y) \right |\\
        \leq & C\left(\int_0^t|\tilde X_s-X_s|\d s +\int_0^t \tilde \lambda_s |\gamma(s,\tilde X_s)-\gamma(s,X_s)|\d s +\int_0^t|\gamma(s,X_s)||\tilde \lambda_s-\lambda_s| \d s\right)\\
        &+\left |\int_0^t \left(\sigma(s,\tilde X_s)-\sigma(s, X_s)\right)\d W_s\right| + \left|\int_0^t \int_{\R_+ \times \R}y\left(\gamma(s, \tilde X_{s-})\boldsymbol{1}_{\theta \leq \tilde \lambda_s}-\gamma(s,X_s)\boldsymbol{1}_{\theta \leq \lambda_s}\right) \bar \Pi (\d s, \d \theta, \d y) \right |
    \end{align*}
    and since Assumptions \ref{ass:gronwall} and \ref{ass:Lipschitz} hold, then
    \begin{align*}
        \left | \tilde X_t-X_t \right|\leq &C\left(\int_0^t|\tilde X_s-X_s|\d s  +\int_0^t|\tilde \lambda_s-\lambda_s| \d s\right)+\left |\int_0^t \left(\sigma(s,\tilde X_s)-\sigma(s, X_s)\right)\d W_s\right| \\
        &+ \left|\int_0^t \int_{\R_+ \times \R}y\left(\gamma(s, \tilde X_{s-})\boldsymbol{1}_{\theta \leq \tilde \lambda_s}-\gamma(s,X_s)\boldsymbol{1}_{\theta \leq \lambda_s}\right) \bar \Pi (\d s, \d \theta, \d y) \right |
    \end{align*}
    Using the Burkholder-Davis-Gundy inequality we have that 
    \begin{align*}
        \E \left[\sup_{s\in[0,t]}\left|\int_0^s \left(\sigma(s,\tilde X_s)-\sigma(s, X_s)\right)\d W_s\right |^2\right] & \leq C \E \left[\int_0^t\left(\sigma(s,\tilde X_s)-\sigma(s, X_s)\right)^2 \d s\right] \leq C \E \left[\int_0^t\left(\tilde X_s- X_s\right)^2 \d s\right]\\
        & \leq C \int_0^t \E \left[\sup_{u\in[0,s]}\left|\tilde X_u-X_u \right|^2\right] \d s,
    \end{align*}
    and 
    \begin{align*}
        \E &\left[\sup_{s\in[0,t]}\left|\int_0^t \int_{\R_+ \times \R}y\left(\gamma(s, \tilde X_{s-})\boldsymbol{1}_{\theta \leq \tilde \lambda_s}-\gamma(s,X_s)\boldsymbol{1}_{\theta \leq \lambda_s}\right) \bar \Pi (\d s, \d \theta, \d y) \right |^2\right]\\  &\leq  C \E \left[\int_0^t\int_{\R_+\times \R}\left| y\left(\gamma(s, \tilde X_{s-})\boldsymbol{1}_{\theta \leq \tilde \lambda_s}-\gamma(s,X_s)\boldsymbol{1}_{\theta \leq \lambda_s}\right)\right|^2 \d s \d \theta m(\d y)\right]\\
        &\leq C \E \left[\int_0^t \int_{\R_+}\left(\gamma(s,\tilde X_s)-\gamma(s,X_s)\right)^2 \boldsymbol{1}_{\theta \leq \tilde \lambda_s} \d \theta \d s\right]+ C \E \left[ \int_0^t \int_{\R_+}\gamma(s,X_s)^2\left|\boldsymbol{1}_{\theta \leq \tilde \lambda_s} -\boldsymbol{1}_{\theta \leq \lambda_s}\right| \d \theta \d s\right]\\
        &\leq C \E \left[\int_0^t \left(\gamma(s,\tilde X_s)-\gamma(s,X_s)\right)^2 \tilde \lambda_s  \d s\right]+ C \E \left[ \int_0^t\left|\tilde \lambda_s -\lambda_s\right| \d s\right],\\
    \end{align*}
    and since Assumption \ref{ass:gronwall} is in force 
    \begin{align*}
         \E &\left[\sup_{s\in[0,t]}\left|\int_0^t \int_{\R_+ \times \R}y\left(\gamma(s, \tilde X_{s-})\boldsymbol{1}_{\theta \leq \tilde \lambda_s}-\gamma(s,X_s)\boldsymbol{1}_{\theta \leq \lambda_s}\right) \bar \Pi (\d s, \d \theta, \d y) \right |^2\right]\\ 
         &\leq  C \int_0^t \E \left[ \sup_{u\in[0,s]} \left |\tilde X_u-X_u\right |^2  \right]  \d s+ C  \int_0^t \E \left[\left|\tilde \lambda_s -\lambda_s\right| \right]\d s.\\
    \end{align*}
    Hence, using the Cauchy-Schwarz inequality 
    \begin{align*}
        \E \left[\sup_{s\in [0,t]}\left | \tilde X_s-X_s\right|^2\right] \leq & C T\left( \int_0^t \E \left[\sup_{u\in [0,s]}\left | \tilde X_u-X_u\right|^2\right]\d s +\int_0^t\E \left[|\tilde \lambda_s-\lambda_s|^2\right] \d s + \int_0^t\E \left[|\tilde \lambda_s -\lambda_s|\right] \d s\right)
    \end{align*}
    and we get the result using Gr\"onwall's inequality coupled with Proposition  \ref{prop:continuity_lambda_noX}.
    
\subsection{Proof of Theorem \ref{thm:continuity_X}}
\label{sec:thmcontinuity_X}
We start with the upper bounds on the first moment. For any $0\leq u \leq s \leq t \leq T$ we have that 
\begin{equation*}
    \begin{cases}
        X_s&=X_u+\int_u^s\mu(r,X_r) \d r +\int_u^s \sigma(r,X_r)\d W_r+\int_u^s \int_{\R_+ \times \R} y\gamma(r, X_{r-}) \boldsymbol{1}_{\theta \leq \lambda_r} \Pi (\d r, \d \theta, \d y)\\
        \tilde X_s&=\tilde X_u+\int_u^s\mu(r,\tilde X_r) \d r +\int_u^s \sigma(r,\tilde X_r)\d W_r+\int_u^s \int_{\R_+ \times \R} y\gamma(r, \tilde X_{r-}) \boldsymbol{1}_{\theta \leq \tilde \lambda_r} \Pi (\d r, \d \theta, \d y)\\
    \end{cases}
\end{equation*}
Taking the absolute value of the difference yields
\begin{align*}
    \left |\tilde X_s -X_s \right | \leq & |\tilde X_u -X_u|+C\int_u^s |\tilde X_r-X_r| \d r + \left|\int_u^s(\sigma(r,\tilde X_r)-\sigma(r,X_r))\d W_r \right|\\
    &+\int_u^s\int_{\R_+\times \R}|y|\boldsymbol{1}_{\theta \leq \lambda_r}\left |\gamma(r,\tilde X_r)-\gamma(r,X_r)\right | \Pi(\d r, \d \theta,\d y)\\
    &+\int_u^s\int_{\R_+\times \R}|y|\left |\boldsymbol{1}_{\theta \leq \tilde \lambda_r}-\boldsymbol{1}_{\theta \leq \lambda_r}\right | \Pi(\d r, \d \theta,\d y).
\end{align*}
the Burkholder-Davis-Gundy inequality yields 
\begin{align*}
    \E \left [\sup_{s\in[u,t]}\left | \int_u^s \left(\sigma(r,\tilde X_r)-\sigma(r,X_r)\right)\d W_r\right | \right]\leq & C\E \left[ \left(\int_u^t \left(\sigma(r,\tilde X_r)-\sigma(r,X_r)\right)^2 \d r \right)^{1/2}\right]\\
    \leq &C\E \left[ \left(\int_u^t \left(\tilde X_r-X_r\right)^2 \d r \right)^{1/2}\right]\\
    \leq & C \sqrt{t-u} \E \left[\sup_{s\in[u,t]}|\tilde X_s -X_s|\right].
\end{align*}
Thus
\begin{align*}
    \E \left[\sup_{s\in[u,t]}\left | \tilde X_s-X_s\right|\right]\leq & \E |\tilde X_u-X_u | + C\left((t-u)+\sqrt{t-u}\right)\E \left[\sup_{s\in[u,t]}\left|\tilde X_s-X_s \right| \right]\\
    &+\int_u^t \E |Y| \E\left[\lambda_s\left| \gamma(s,\tilde X_s)-\gamma(s,X_s)\right|\right] \d s + \int_u^t\E |Y| \E \left[\left| \tilde \lambda_s-\lambda_s\right|\right] \d s \\
    \leq & \E|\tilde X_u-X_u | + C\left((t-u)+\sqrt{t-u}\right)\E \left[\sup_{s\in[u,t]}\left|\tilde X_s-X_s \right| \right]+ C\int_u^t \E \left[\left| \tilde \lambda_s-\lambda_s\right|\right] \d s \\
\end{align*}
where the last inequality is true because Assumption \ref{ass:gronwall} is in force. Using Proposition \ref{prop:continuity_lambda_depend_X}, we have that
\begin{align*}
     \E \left[\sup_{s\in[u,t]}\left | \tilde X_s-X_s\right|\right]\leq & \E \left[|\tilde X_u-X_u |\right] + C\left((t-u)+\sqrt{t-u}\right)\E \left[\sup_{s\in[u,t]}\left|\tilde X_s-X_s \right| \right]\\
     &+ C(t-u)\|\tilde \phi-\phi\|_1 +C(t-u)\E \left[\sup_{s\in[0,t]}\left |\tilde X _s -X_s \right |\right].
\end{align*}
We now assume that $t-u$ is smaller than one and we set $t-u=\delta :=\frac{1}{16C^2}$ to obtain
\begin{align*}
     \E \left[\sup_{s\in[u,t]}\left | \tilde X_s-X_s\right|\right]\leq & \E \left[|\tilde X_u-X_u |\right] +\frac{1}{2}\E \left[\sup_{s\in[u,t]}\left|\tilde X_s-X_s \right| \right]\\
     &+\sqrt \delta \|\tilde \phi-\phi\|_1 +\sqrt \delta \E \left[\sup_{s\in[0,t]}\left |\tilde X _s -X_s \right |\right],
\end{align*}
and thus 
\begin{equation}
\label{ineq:sup_L1}
     \E \left[\sup_{s\in[u,t]}\left | \tilde X_s-X_s\right|\right]\leq  2\E \left[|\tilde X_u-X_u |\right] + 2\sqrt \delta \|\tilde \phi-\phi\|_1 +2\sqrt \delta \E \left[\sup_{s\in[0,t]}\left |\tilde X _s -X_s \right |\right].
\end{equation}
We not set $f_n:=\E \left[\sup_{s \in [0,n\delta]}\left| \tilde X_s-X_s\right |\right]$ and bounding the maximum of two positive terms by their sum we have that 
\begin{align*}
    f_{n+1}=&\E \left[\sup_{s \in [0,(n+1)\delta]}\left| \tilde X_s-X_s\right |\right]
    =\E \left[\max\left(\sup_{s \in [0,n\delta]}\left| \tilde X_s-X_s\right |,\sup_{s \in [n\delta,(n+1)\delta]}\left| \tilde X_s-X_s\right | \right)\right]\\
    \leq &f_n+ \E \left[\sup_{s \in[n\delta,(n+1)\delta] } \left | \tilde X_s -X_s\right |\right].
\end{align*}
Using Inequality \eqref{ineq:sup_L1} and since without loss of generality one can assume that $C\geq 1$ (and hence $2 \sqrt \delta \leq \frac{1}{2}$) and we have that 
\begin{align*}
    f_{n+1}\leq & f_n + 2\E \left[|\tilde X_{n \delta}-X_{n\delta} |\right] + \frac{1}{2} \|\tilde \phi-\phi\|_1 +\frac{1}{2}\E \left[\sup_{s\in[0,(n+1)\delta]}\left |\tilde X _s -X_s \right |\right]\\
    = & f_n + 2\E \left[|\tilde X_{n \delta}-X_{n \delta} |\right] + \frac{1}{2} \|\tilde \phi-\phi\|_1 + \frac{1}{2}f_{n+1}\\
    \leq & 2 f_n +4 \E \left[|\tilde X_{n \delta}-X_{n \delta} |\right] +  \|\tilde \phi-\phi\|_1
\end{align*}
and using the obvious upper bound $ \E \left[|\tilde X_{n \delta}-X_{n \delta} |\right] \leq f_n$ we get that 
$$f_{n+1}\leq 6 f_n+\left \|\tilde \phi-\phi \right \|_1.$$
By multiplying both sides of the inequality by $6^{-(n+1)}$, taking the telescopic sum and keeping in mind that $f_0=0$, 
$$\E \left[\sup_{s \in [0,n\delta]} \left |\tilde X_s-X_s \right |\right]\leq \frac{6^n}{5}\|\tilde \phi-\phi\|_1.$$
Finally, the interval $[0,T]$ is divided into $n=\lfloor T/\delta\rfloor + 1 \leq (16TC^2+2)$ and the result immediately follows.

\subsection{Proof of Theorem \ref{thm:convergence-value-function}}
\label{sec:thmcvgvalue}
For notational simplicity, we assume without loss of generality that $f = 0$.\\
\textit{Step 1.} We first assume that $g$ is bounded. For $R > 0$, introduce $ B_R := \{ x \in \mathbb{R}^d : | x | \le R \}.$ Since $g$ is continuous on $\mathbb{R}^d$, it is uniformly continuous on $B_R$. Therefore, there exists a modulus of continuity $\rho_R$ such that:
$$ | g(x) - g(x') | \le \rho_R(| x-x'|) \ \mbox{for all} \ x, x' \in B_R. $$
Note that, since $g$ is bounded, we may assume without loss of generaility that $\rho_R$ is bounded by a constant independent of $R$. Therefore, given an arbitrary control $\alpha \in \cA$ controlling both $X$ and its Markov approximation $X^n$. Denote $\delta X_T^n := X_T^n - X_T$. We have the estimates:
\begin{align*}
\E\big[ | g(X_T^n) - g(X_T) | \big] \le& \E\big[ \rho_R\big(| \delta X_T^n  | \big) + | g(X_T^n) - g(X_T) | \boldsymbol{1}_{\{(X_T^n, X_T) \notin B_R^2 \} } \big] \\
\le& \E\big[ \rho_R\big(| \delta X_T^n  | \big) \big] + \Big( \E\big[ | g(X_T^n) - g(X_T) |^2 \big] \PP\big[ (X_T^n, X_T) \notin B_R^2 \big]\Big)^{1/2} \\
\le& \E\big[ \rho_R\big(| \delta X_T^n | \big) \big] + C \Big( \frac{\E\big[ |X_T^n| + |X_T| \big]}{R}\Big)^{1/2},
\end{align*}
where we successively used Cauchy-Schwarz inequality, the boundedness $g$ and Markov inequality. Then, by proposition \ref{prop:SDE}, we have for some nonnegative $C = C(|x_0|,T)$:
\begin{equation}\label{cv-ineq-1}
 \E\big[ | g(X_T^n) - g(X_T) | \big] \le \E\big[ \rho_R\big(| \delta X_T^n | \big) \big] + R^{-1/2}. 
 \end{equation}
 Now, observe that for any $\eta > 0$:
 \begin{align*}
     \E\big[ \rho_R(\delta X_T^n) \big] \le& \E\big[ \rho_R(\eta) + \rho_R(| \delta X_T^n |) \boldsymbol{1}_{\{ | \delta X_T^n | \ge \eta \}} ] \le \rho_R(\eta) + \frac{C}{\eta}\E\big[ | X_T^n - X_T | \big] \\
     \le& \rho_R(\eta) + \frac{\lVert \phi^n - \phi \rVert_1}{\eta},
 \end{align*}
 where we used the fact that $\rho_R$ is bounded independently of $R$, Markov inequality and Theorem \ref{thm:continuity_X}. Plugging this into \eqref{cv-ineq-1}, we obtain:
 $$  \E\big[ | g(X_T^n) - g(X_T) | \big] \le \rho_R(\eta) + \frac{\lVert \phi^n - \phi \rVert_1}{\eta} + R^{-1/2}. $$
 For an arbitrarily small $\varepsilon > 0$, we may then choose $R = R_\varepsilon$ and $\eta = \eta_\varepsilon$ such that
 $ \rho_R(\eta_\varepsilon) + R_\varepsilon^{-1/2} \le \frac{2}{3}\varepsilon, $
 and $n_\varepsilon$ such that:
 $ \frac{\lVert \phi^n - \phi \rVert_1}{\eta_\varepsilon} \le \frac{\varepsilon}{3} \ \mbox{for all} \ n \ge n_\varepsilon, $
 hence:
 $$ \E\big[ | g(X_T^n) - g(X_T) | \big] \le \varepsilon \quad \mbox{for all} \ n \ge n_\varepsilon. $$
 \textit{Step 2.} For $M > 0$, denote $g^M := g \wedge M \vee (-M)$. We then introduce:
 \begin{align*}
     V_0^M := \sup_{\alpha \in \cA} \E\big[ g^M(X_T) \big], \quad  V_0^{n,M} := \sup_{\alpha \in \cA} \E\big[ g^M(X_T^n) \big].
 \end{align*}
Observe that:
\begin{align*}
    | V_0^{M} - V_0 | \le& \sup_{\alpha \in \cA} \E\big[ | g^M(X_T) - g(X_T) | \big] = \sup_{\alpha \in \cA} \E\big[ | g^M(X_T) - g(X_T) | \boldsymbol{1}_{\{ | g(X_T) | \ge M \}} \big] \\
    \le& \sup_{\alpha \in \cA} C\Big( \E\big[ 1 + |X_T|^{2m} \big] \PP\big[ | g(X_T) | \ge M \big]  \Big)^{1/2} \le  \frac{C}{M} \sup_{\alpha \in \cA}  \E\big[ |g(X_T) | \big]^{1/2} \le \frac{C'}{M},
\end{align*}
where we used again Markov inequality, the polynomial growth of $g$ and \eqref{eq:indep-of-alpha}. Note that the constants $C$ and $C'$ are independent of $M$, $n$ and the choice of control $\alpha \in \cA$. Similarly, we have:
$$| V_0^{n,M} - V_0^n | \le \frac{C}{M}.$$
Finally, setting $M = M_{\varepsilon} = \frac{3C}{2\varepsilon}$, we have:
$ |V_0^n - V_0 | \le | V_0^{n,M_\varepsilon} - V_0^{M_\varepsilon} | + \frac{2}{3}\varepsilon.$
By Step 1, we know that $V_0^{n,M_\varepsilon} \to V_0^{M_\varepsilon}$ as $n \to \infty$. Therefore, there exists $n_\varepsilon$ such that:
$$|V_0^n - V_0 | \le \varepsilon \quad \mbox{for all} \ n \ge n_\varepsilon, $$
and this concludes the proof.

\appendix

\section{Technical lemmata}\label{sec:appendix}
\begin{Lemma}
\label{lmm:ineg_de_base}
Let $\phi:\R_+\to\mathcal M_{d,d}(\R_+)$ be such that $(\|\phi^{ij}\|_1)_{i,j=1,\cdots,d}$ has a spectral radius strictly less than one and $\boldsymbol g$ a locally integrable $d-$variate function.\\  
    Let $\boldsymbol f$ be a measurable, locally finite and non-negative $d-$variate function such that 
    $$\boldsymbol f(t) \leq \boldsymbol g(t) + \int_0^t \phi(t-s) \boldsymbol f(s) \d  s, \quad t \geq 0$$
    where the inequality holds component-wise.
    Then we have for all $t\geq 0$ $$\boldsymbol f (t) \leq \boldsymbol{g}(t) + \int_0^t Q_\phi (t-s) \boldsymbol{g}(s) \d s$$
    component-wise, with $Q_\phi=\sum_{n\geq 1}\phi^{(n)}$, where $\phi^{(n)}(t)=\int_0^t \phi(t-s)\phi^{(n-1)}(s)\d s$ and $\Phi^{(0)}=I_d$.
    In particular, if $\boldsymbol{g}(t) \equiv \boldsymbol{g}$ some constant vector, we have that
    $$\boldsymbol f (t) \leq \left( I-\|\Phi\|_1\right)^{-1}\boldsymbol{g}$$
\end{Lemma}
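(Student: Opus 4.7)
The strategy is the classical resolvent / Picard iteration argument. Writing $(\phi\ast\boldsymbol{h})(t):=\int_0^t\phi(t-s)\boldsymbol{h}(s)\d s$, the hypothesis reads $\boldsymbol{f}\leq\boldsymbol{g}+\phi\ast\boldsymbol{f}$ componentwise. Since $\phi\geq 0$ and $\boldsymbol{f}\geq 0$, plugging this bound back into itself preserves the inequality, so a straightforward induction on $n\geq 0$ yields
$$\boldsymbol{f}(t)\;\leq\;\boldsymbol{g}(t)+\sum_{k=1}^{n}(\phi^{(k)}\ast\boldsymbol{g})(t)+(\phi^{(n+1)}\ast\boldsymbol{f})(t).$$
The goal is then to let $n\to\infty$: the series must converge and the remainder must vanish.

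For the series, I would establish the componentwise bound $\|\phi^{(n)}\|_1\leq(\|\phi\|_1)^n$, viewing the right-hand side as the $n$-fold matrix power of the nonnegative matrix $\|\phi\|_1=(\|\phi^{ij}\|_1)_{ij}$. This follows by induction from Fubini's theorem: $\|\phi^{(n),ij}\|_1\leq\sum_k\|\phi^{ik}\|_1\,\|\phi^{(n-1),kj}\|_1$. Since $\|\phi\|_1$ has spectral radius strictly less than one, Gelfand's formula gives $\|\phi\|_1^n\to 0$, and the Neumann series $\sum_{n\geq 0}\|\phi\|_1^n=(I-\|\phi\|_1)^{-1}$ converges, so $Q_\phi=\sum_{n\geq 1}\phi^{(n)}$ is a well-defined element of $L^1_{\mathrm{loc}}(\R_+;\mathcal{M}_{d,d}(\R_+))$ and the series $\sum_{k\geq 1}(\phi^{(k)}\ast\boldsymbol{g})(t)$ converges to $(Q_\phi\ast\boldsymbol{g})(t)$ componentwise.

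The main—and really only—obstacle is showing the remainder $(\phi^{(n+1)}\ast\boldsymbol{f})(t)$ vanishes as $n\to\infty$. Here I would exploit the local finiteness of $\boldsymbol{f}$: on the compact interval $[0,t]$, $\boldsymbol{f}$ is bounded componentwise by some constant vector $\boldsymbol{M}$, so
$$(\phi^{(n+1)}\ast\boldsymbol{f})(t)\;\leq\;\boldsymbol{M}\int_0^t\phi^{(n+1)}(s)\d s\;\leq\;\|\phi^{(n+1)}\|_1\,\boldsymbol{M}\;\leq\;(\|\phi\|_1)^{n+1}\boldsymbol{M}\;\longrightarrow\;0.$$
(Alternatively, if only local integrability of $\boldsymbol{f}$ is assumed, Young's inequality gives $\|\phi^{(n+1)}\ast\boldsymbol{f}\|_{L^1([0,t])}\to 0$, which is enough to pass to the limit almost everywhere.) Passing to the limit in the iterated inequality then yields the first assertion $\boldsymbol{f}(t)\leq\boldsymbol{g}(t)+(Q_\phi\ast\boldsymbol{g})(t)$.

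For the constant-vector case, bound the convolution directly: $(Q_\phi\ast\boldsymbol{g})(t)\leq\sum_{n\geq 1}\|\phi^{(n)}\|_1\boldsymbol{g}\leq\sum_{n\geq 1}(\|\phi\|_1)^n\boldsymbol{g}=\bigl((I-\|\phi\|_1)^{-1}-I\bigr)\boldsymbol{g}$, so adding $\boldsymbol{g}$ telescopes to $(I-\|\phi\|_1)^{-1}\boldsymbol{g}$, which is the announced bound.
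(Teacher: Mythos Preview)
Your proposal is correct and follows essentially the same approach as the paper: the paper convolves the inequality with $\phi^{(n)}$ and telescopes the resulting chain $\phi^{(n)}\ast\boldsymbol f-\phi^{(n+1)}\ast\boldsymbol f\le\phi^{(n)}\ast\boldsymbol g$, which is exactly your Picard iteration written differently, and it uses the same local boundedness of $\boldsymbol f$ to kill the remainder $\phi^{(n)}\ast\boldsymbol f$. You actually supply more detail than the paper does---in particular the explicit bound $\|\phi^{(n)}\|_1\le(\|\phi\|_1)^n$ justifying the convergence of $Q_\phi$ (the paper merely cites a reference) and the derivation of the constant-$\boldsymbol g$ bound via the Neumann series.
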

\begin{proof}

 We start by stating that since $(\|\phi^{ij}\|_1)_{i,j=1,\cdots,d}$ has a spectral radius strictly less than one, $Q_\phi$ is well defined (cf. \cite{MR3054533}).
Since $\boldsymbol{f}$ is bounded on $[0,t]$ we have that 
$$\int_0^t \phi^{(n)}(s) \boldsymbol f(t-s)\d s\leq \sup_{0\leq u\leq t}\boldsymbol{f}(u)\int_0^t\phi^{(n)}(s) \d s\xrightarrow[n\to+\infty]{}0.$$
Hence, by taking the convolution with $\phi^{(n)}$ and rearranging the terms we have 
\begin{align*}
    \int_0^t \phi^{(n)}(t-s)\boldsymbol{f}(s)\d s-\int_0^t \phi^{(n+1)}(t-s)\boldsymbol{f}(s) \d s &\leq \int_0^t \phi^{(n)}(t-s) \boldsymbol{g}(s) \d s\\
\end{align*}
which yields the result by telescoping. 
\end{proof}

\begin{Lemma}
\label{lmm:moments_lambda}
    Assume that in addition to Assumptions \ref{ass:gronwall} and \ref{ass:Lipschitz}, Assumption \ref{ass:stability} holds. Then there exist two positive constants $c_1$ and $c_2$ that do not depend on $T$ such that 
    $$\sup_{t\in [0,T]} \E [\lambda_t] \leq c_1 \quad \text{and} \quad \sup_{t\in [0,T]}\E[\lambda_t^2]\leq c_2.$$
\end{Lemma}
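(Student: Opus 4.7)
The plan is to reduce both moment bounds to \emph{linear} Volterra integral inequalities whose driving kernel matches exactly the contraction condition in Assumption~\ref{ass:stability}, and then invoke Lemma~\ref{lmm:ineg_de_base}. Since $\phi$ is bounded and in $\mathbb L^1(\R_+)$, the norms $\|\phi\|_1$, $\|\phi\|_2$, $\|\phi\|_\infty$ are all controlled uniformly in $T$, so any constants produced will not depend on $T$.

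For the first moment, I would use the $L$-Lipschitz property of $\psi$ to write $\psi(z)\le \psi(0)+L|z|$, combine it with the boundedness of $\lambda_\infty$ and $|\nu|\le 1$, and dominate the integrand inside the convolution $Z_t$ of the intensity by its absolute value. Taking expectation and using the compensator $\d s\,\d\theta\, m(\d y)$ of $\Pi$ leads to
\begin{equation*}
\E[\lambda_t]\;\le\; C\;+\; L\,\E[b(Y)]\int_0^t |\phi(t-s)|\,\E[\lambda_s]\,\d s,
\end{equation*}
so that the contraction $L\E[b(Y)]\|\phi\|_1<1$ from Assumption~\ref{ass:stability} lets Lemma~\ref{lmm:ineg_de_base} yield a uniform constant $c_1$.

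For the second moment, naively expanding the square produces a Volterra coefficient of order $L^2\E[b(Y)]^2\|\phi\|_1^2$, which is too big to match Assumption~\ref{ass:stability}. The fix is to work with $h(t):=\E[\lambda_t^2]^{1/2}$ and use Minkowski everywhere rather than an elementary $(a+b)^2\le 2a^2+2b^2$ estimate. Split $Z_t=M_t+A_t$ into the $\bar\Pi$-stochastic integral and its compensator. Itô isometry for the deterministic kernel $\phi(t-s)$ gives
\begin{equation*}
\E[M_t^2]\;\le\; \E[b(Y)^2]\int_0^t \phi(t-s)^2\,\E[\lambda_s]\,\d s\;\le\; c_1\,\E[b(Y)^2]\,\|\phi\|_\infty\|\phi\|_1,
\end{equation*}
which is bounded thanks to the first step. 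For $A_t=\E[b(Y)]\int_0^t \phi(t-s)\nu(s,X_s)\lambda_s\,\d s$, a Cauchy--Schwarz applied inside the double integral $\E[A_t^2]=\E[b(Y)]^2\iint \phi(t-s)\phi(t-s')\,\E[\nu_s\nu_{s'}\lambda_s\lambda_{s'}]\,\d s\,\d s'$ together with $\E[\nu_s\nu_{s'}\lambda_s\lambda_{s'}]\le h(s)h(s')$ yields
\begin{equation*}
\sqrt{\E[A_t^2]}\;\le\; \E[b(Y)]\int_0^t |\phi(t-s)|\,h(s)\,\d s.
\end{equation*}
Assembling via Minkowski produces the linear Volterra inequality
\begin{equation*}
h(t)\;\le\; C'\;+\; L\,\E[b(Y)]\int_0^t |\phi(t-s)|\,h(s)\,\d s,
\end{equation*}
and a second application of Lemma~\ref{lmm:ineg_de_base} delivers $h(t)\le c_2^{1/2}$, i.e.\ $\E[\lambda_t^2]\le c_2$.

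The main obstacle is precisely keeping the effective contraction coefficient at $L\E[b(Y)]\|\phi\|_1$ rather than at its square: this is what forces me to estimate in the $L^2(\PP)$-norm via Minkowski and to push the square root inside the convolution through the double-integral Cauchy--Schwarz trick on $A_t$. A minor technical point is that, to legitimately manipulate $\E[\lambda_t^2]$ before knowing it is finite, one should first carry out the computation on the stopped process $\lambda_{\cdot\wedge\tau_M}$ with $\tau_M:=\inf\{t:\lambda_t\ge M\}$ and let $M\to\infty$ by monotone convergence, mirroring the localization already used in the proof of Proposition~\ref{prop:SDE}.
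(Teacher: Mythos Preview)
Your first-moment argument is exactly the paper's: dominate $\psi(Z_t)$ by $\psi(0)+L|Z_t|$, pass absolute values inside the $\Pi$-integral, take expectations against the compensator, and invoke Lemma~\ref{lmm:ineg_de_base} with the kernel $L\E[b(Y)]|\phi|$.

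For the second moment your argument is correct but genuinely different from the paper's. The paper compensates $\Pi$ and applies Lemma~\ref{lmm:ineg_de_base} \emph{pathwise} to the $\omega$-wise inequality $\lambda_t\le M_t+L\E[b(Y)]\int_0^t|\phi(t-s)|\lambda_s\,\d s$, obtaining $\lambda_t\le M_t+\int_0^tQ_\phi(t-s)M_s\,\d s$ with $Q_\phi$ the resolvent; it then squares, takes expectation, and controls the cross terms $\E[M_tM_u]$ via It\^o isometry, referring to \cite{coutin2024functionalapproximationmarkedhawkes} for the details. You instead stay at the level of $h(t)=\|\lambda_t\|_{L^2(\PP)}$ and use Minkowski: the martingale part $M_t$ contributes a uniformly bounded constant (thanks to the already-established first-moment bound), while the compensator part $A_t$ yields exactly the convolution $\E[b(Y)]\int_0^t|\phi(t-s)|h(s)\,\d s$, so Lemma~\ref{lmm:ineg_de_base} closes on $h$ with the \emph{same} contraction constant $L\E[b(Y)]\|\phi\|_1$. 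Your route is shorter and avoids the double-integral cross terms; the paper's pathwise resolvent bound, on the other hand, gives an almost-sure domination of $\lambda_t$ by a functional of $M$, which is a stronger intermediate object (useful e.g.\ if one wants other moments or tail bounds). Both approaches implicitly require $\E[b(Y)^2]<\infty$ for the It\^o isometry on $M_t$. Your remark on localization is well placed and mirrors the paper's own use of stopping in Proposition~\ref{prop:SDE}.
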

\begin{proof}
We note that if the first condition of Assumption \ref{ass:gronwall} holds, then there is nothing to prove. We thus focus on the second condition.
A complete proof of both of these bounds is present in \cite{coutin2024functionalapproximationmarkedhawkes}, we give here the proof of the first moment for the sake of completeness.
Using the Lipschitz growth of $\psi$ and the fact that $\lambda_\infty$ is bounded from above (by some constant $\bar \lambda$) we have that 
\begin{align*}
    \lambda_t \leq & \bar \lambda +\psi(0)+ L \left | \int_0^{t-} \int_{\R_+ \times \R}\phi (t-s) b(y) \nu (s,X_{s-})\boldsymbol{1}_{\theta \leq \lambda_s} \Pi (\d s, \d \theta ,\d y)\right|\\
     \leq & \bar \lambda +\psi(0) + L \int_0^{t-} \int_{\R_+ \times \R}\left |\phi (t-s) b(y)\boldsymbol{1}_{\theta \leq \lambda_s} \right|\Pi (\d s, \d \theta ,\d y),
\end{align*}
because $\nu$ is bounded from above by $1$. Taking the expected value of the last inequality yields
\begin{align*}
    \E [\lambda_t] \leq &  \bar \lambda + \psi(0)+ \lambda + L \E \left[\int_0^{t-} \int_{\R_+ \times \R}\left |\phi (t-s) b(y)\boldsymbol{1}_{\theta \leq \lambda_s} \right|\d s \d \theta m(\d y)\right]\\
    \leq & \bar \lambda +\psi(0)+ L \int_0^{t-} \left |\phi (t-s)\right| \E[b(Y)]\E \left[\lambda_s \right]\d s . \\
\end{align*}
The fact that Assumption \ref{ass:stability} is in force and using Lemma \ref{lmm:ineg_de_base} we obtain that 
\begin{equation}
    \label{ineq:lambda}
    \sup_{t\in [0,T]} \E[\lambda_t] \leq (1-L\E[b(Y)]\|\phi\|_1)^{-1}(\bar \lambda+\psi(0)).
\end{equation}
For the upper bound on the second moment, we compensate $\Pi$ which yields 
\begin{align*}
    \lambda_t \leq & \bar \lambda +\psi(0)+ L \int_0^{t-} \int_{\R_+ \times \R}\left |\phi (t-s) b(y)\boldsymbol{1}_{\theta \leq \lambda_s} \right|\left(\Pi (\d s, \d \theta ,\d y) -\d s \d \theta m(\d y)\right) + \int_{\R_+ \times \R}\left |\phi (t-s) \right| \E[b(Y)] \lambda_s  \d s\\
    \leq & M_t +  \int_{\R_+ \times \R}\left |\phi (t-s) \right| \E[b(Y)] \lambda_s  \d s
\end{align*}
where $M_t=\lambda +\psi(0)+ L \int_0^{t-} \int_{\R_+ \times \R}\left |\phi (t-s) b(y)\boldsymbol{1}_{\theta \leq \lambda_s} \right|\left(\Pi (\d s, \d \theta ,\d y) -\d s \d \theta m(\d y)\right)$. We then apply Lemma \ref{lmm:ineg_de_base} and proceed just like the proof of Lemma 6.4 in \cite{coutin2024functionalapproximationmarkedhawkes}.
Note that since we assumed that $\phi$ is bounded and $\mathbb L^1(\R_+)$, it is automatically in $\mathbb L^2(\R_+)$.
\end{proof}

\begin{Lemma}
    \label{lmm:higher_moments_lambda}
    Let $p>2$ and assume that Assumptions \ref{ass:gronwall} and \ref{ass:Lipschitz} hold. Assume furthermore that $b(Y)$ has a finite $p-$th moment. Then there exists a positive constant $C_{p}$ such that 
    $$\E \left[\sup_{t \in [0,T]} \lambda_t^p\right]\leq C_{p} e^{C_{p}T^{p}}.$$.
\end{Lemma}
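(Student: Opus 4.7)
The plan is to bound $\E\bigl[\sup_{s \le t}\lambda_s^p\bigr]$ via a Grönwall argument applied to the compensated form of the integral defining $\lambda$. Under case 1 of Assumption \ref{ass:gronwall} the jump rate $\psi$ is bounded, whence $\lambda$ is bounded deterministically and the conclusion is immediate; so the interesting case is when $\psi$ is merely Lipschitz (as in Assumption \ref{ass:stability}, which I assume is tacitly in force here, as in the previous moment lemma). Using the Lipschitz property of $\psi$, the uniform bounds $\lambda_\infty \le \bar\lambda$, $\nu \le 1$ and $\|\phi\|_\infty < \infty$ from Assumption \ref{ass:Lipschitz}, I would first obtain the pathwise estimate
\begin{equation*}
\lambda_t \;\leq\; C_0 \,+\, L \|\phi\|_\infty \int_0^{t-}\!\!\int_{\R_+ \times \R} b(y) \mathbf{1}_{\theta \leq \lambda_s}\, \Pi(\d s, \d \theta, \d y).
\end{equation*}
Compensating $\Pi$ with $\d s\,\d\theta\,m(\d y)$, this rewrites as $\lambda_t \leq C_0 + C_1 M_t + C_2 \int_0^t \lambda_s \d s$, where $M$ is a purely-jump martingale.

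Next, I would apply Kunita's inequality (\textit{cf.} Theorem 4.4.23 in \cite{Applebaum}), exploiting the assumption that $b(Y)$ has a finite $p$-th moment, to get
\begin{equation*}
\E\Big[\sup_{s\le t} |M_s|^p\Big] \leq C_p \,\E\Big[\Big(\int_0^t \lambda_s \d s\Big)^{p/2}\Big] + C_p\, \E\Big[\int_0^t \lambda_s \d s\Big].
\end{equation*}
By Jensen's inequality $\big(\int_0^t \lambda_s \d s\big)^{q} \le t^{q-1}\int_0^t \lambda_s^q \d s$ for $q \ge 1$, together with the crude bound $\lambda_s^{p/2} \le 1 + \lambda_s^p$ valid for $p > 2$, both terms are controlled by $C_p(1 + T^p) + C_p(1 + T^{p/2-1})\int_0^t \E\big[\sup_{u\le s}\lambda_u^p\big]\d s$. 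Raising the pointwise bound on $\lambda_t$ to the $p$-th power, taking the supremum, and combining with the analogous Jensen bound $\E\big[\big(\int_0^t \lambda_s \d s\big)^p\big] \le T^{p-1}\int_0^t \E\big[\sup_{u\le s}\lambda_u^p\big]\d s$, I would arrive at an integral inequality of the form
\begin{equation*}
\E\Big[\sup_{s\le t}\lambda_s^p\Big] \leq C_p(1 + T^p) + C_p(1 + T^{p-1}) \int_0^t \E\Big[\sup_{u\le s}\lambda_u^p\Big] \d s,
\end{equation*}
to which Grönwall's lemma applies and yields $\E\big[\sup_{s\le T}\lambda_s^p\big] \le C_p(1+T^p) \exp(C_p T^p \!+ C_p T) \le C'_p e^{C'_p T^p}$, as announced.

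The main obstacle is securing the \emph{a priori} finiteness of $\E\big[\sup_{s\le t}\lambda_s^p\big]$, without which Grönwall's lemma cannot be invoked. I would handle this by a standard localization argument: introducing $\tau_M := \inf\{t \ge 0 : \lambda_t \ge M\}$, the above inequality can be established for $\lambda_{\cdot \wedge \tau_M}$ with constants independent of $M$, after which monotone convergence as $M \to \infty$ delivers the claimed bound. A secondary technical point is that the integrand $\phi(t-s)$ in the defining equation depends on the terminal time $t$, so the relevant integral is not a martingale in $t$; this is precisely bypassed by the uniform majorization $|\phi(t-s)| \le \|\phi\|_\infty$, which decouples the Volterra structure from the stochastic integration and reduces the problem to controlling a genuine martingale through Kunita's inequality.
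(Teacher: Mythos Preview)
Your proposal is correct and follows essentially the same route as the paper: bound $\lambda_t$ pathwise via the Lipschitz growth of $\psi$ and $\|\phi\|_\infty$, compensate $\Pi$, apply Kunita's BDG inequality (the paper cites the same Theorem 4.4.23 in \cite{Applebaum}), control $(\int_0^t\lambda_s\,\d s)^{p/2}$ by $1+(\int_0^t\lambda_s\,\d s)^{p}$ and then by $T^{p-1}\int_0^t\sup_{u\le s}\lambda_u^p\,\d s$ via H\"older/Jensen, and conclude by Gr\"onwall. Your explicit remarks on the localization and on why the $\|\phi\|_\infty$ majorization is needed to decouple the Volterra $t$-dependence are exactly the points the paper handles implicitly (it defers the localization to the proof of Proposition~\ref{prop:SDE}).
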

\begin{proof}
In this proof we omit to explicitly write the localisation argument as it has already been used in the proof of Proposition \ref{prop:SDE}. 
    Using the Lipschitz growth of $\psi$ and the fact that $\lambda_\infty$ is bounded from above (by some constant $\bar \lambda$) we have that 
\begin{align*}
    \lambda_t \leq & \bar \lambda +\psi(0)+ L \left | \int_0^{t-} \int_{\R_+ \times \R}\phi (t-s) b(y) \nu (s,X_{s-})\boldsymbol{1}_{\theta \leq \lambda_s} \Pi (\d s, \d \theta ,\d y)\right|\\
     \leq & C \left(1 +  \int_0^{t-} \int_{\R_+ \times \R}\left | b(y) \right|\boldsymbol{1}_{\theta \leq \lambda_s}\Pi (\d s, \d \theta ,\d y) \right)\\
     \leq & C\left(1+ \int_0^{t-} \int_{\R_+ \times \R}\left | b(y) \right|\boldsymbol{1}_{\theta \leq \lambda_s} \left(\Pi (\d s, \d \theta ,\d y)-\d s \d \theta m(\d y)\right) + \int_0^t\lambda_s \d s\right),
\end{align*}
because $\nu$ and $\phi$ are bounded from above. Therefore we have that 
\begin{align*}
    \sup_{t \in [0,T]} \lambda_t^p \leq & C_p \left(1+  \sup_{t \in [0,T]}  \left(\int_0^{t-} \int_{\R_+ \times \R}\left | b(y) \right|\boldsymbol{1}_{\theta \leq \lambda_s} \left(\Pi (\d s, \d \theta ,\d y)-\d s \d \theta m(\d y)\right)\right)^p+ \sup_{t\in [0,T]}\left(\int_0^t \lambda_s \d s\right)^p\right) \\
    \leq & C_p \left(1+  \sup_{t \in [0,T]}  \left(\int_0^{t-} \int_{\R_+ \times \R}\left | b(y) \right|\boldsymbol{1}_{\theta \leq \lambda_s} \left(\Pi (\d s, \d \theta ,\d y)-\d s \d \theta m(\d y)\right)\right)^p+T^{p-1} \int_0^T\sup_{s\in [0,t]} \lambda_s^p \d t\right).\\
\end{align*}
Using the Burkholder-Davis-Gundy inequality for jump martingales (\textit{cf.} Theorem 4.4.23 in \cite{Applebaum}) and H\"older's inequality we have 
\begin{align*}
    \E \left[\sup_{t \in [0,T]} \lambda_t^p \right]\leq & C_p\left(1+\E \left[\left(\int_0^T\lambda_s \d s\right)^{p/2}\right]+T^{p-1}\int_0^T\E \left[\sup_{s\in [0,t]} \lambda_s^p\right]\d t\right)\\
    \leq & C_p\left(2+\E \left[\left(\int_0^T\lambda_s \d s\right)^{p}\right]+T^{p-1}\int_0^T\E \left[\sup_{s\in [0,t]} \lambda_s^p\right]\d t\right) \\
    \leq & C_p\left(1 + T^{p-1}\int_0^T\E \left[\sup_{s\in [0,t]} \lambda_s^p\right]\d t\right).
\end{align*}
Using Gr\"onwall's inequality, we have that $\E \left[\sup_{t \in [0,T]} \lambda_t^p \right]\leq C_p e^{C_pT^{p}}.$
\end{proof}
\section{Proofs of other results}
\label{sec:other_proofs}
\subsection{Proof of Proposition \ref{prop:continuity_lambda_noX}}
\label{sec:thmcontinuity_lambda_noX}
We start the proof by giving an upper bound on $\E \left[\left| \tilde \lambda_s-\lambda_s\right| \right]$.
    Using the Lipschitz property of $\psi$ coupled with the fact that $\nu$ is bounded by $1$, we have that 
\begin{align*}
     \left |\tilde \lambda_t-\lambda_t \right |\leq & L\left |\int_0^{t-}\int_{\R_+\times \R}  \nu (s) b(y)\left(\tilde\phi (t-s) \boldsymbol{1}_{\theta \leq \tilde \lambda_s}- \phi (t-s) \boldsymbol{1}_{\theta \leq \lambda_s} \right)\Pi (\d s, \d \theta ,\d y) \right|\\
        \leq &L\int_0^{t-}\int_{\R_+\times \R}b(y)\left |\left(\tilde\phi (t-s) \boldsymbol{1}_{\theta \leq \tilde \lambda_s}- \phi (t-s) \boldsymbol{1}_{\theta \leq \lambda_s} \right)\right|\Pi (\d s, \d \theta ,\d y), \\
\end{align*}
which, by adding and subtracting the adequate terms yields 
\begin{align*}
    \left |\tilde \lambda_t- \lambda_t \right| \leq 
         &L\int_0^{t-}\int_{\R_+\times \R}b(y) |\tilde \phi(t-s)|\left|\boldsymbol{1}_{\theta \leq \lambda_s}-\boldsymbol{1}_{\theta \leq \tilde \lambda_s } \right| \Pi (\d s, \d \theta ,\d y)\\
          &+L\int_0^{t-}\int_{\R_+\times \R} b(y)\left |\tilde \phi(t-s)-\phi(t-s)\right||\boldsymbol{1}_{\theta \leq \lambda_s}|\Pi (\d s, \d \theta ,\d y)\\
\end{align*}
and by taking the expected value we have 
\begin{align*}
    \E \left[ \left |\tilde \lambda_t- \lambda_t \right|\right] \leq &  \int_0^tL\E b(Y)\tilde \phi(t-s) \E \left| \tilde \lambda_s -\lambda_s\right| \d s+L \E b(Y)\int_0^t\left |\tilde \phi(t-s)-\phi(t-s)\right| \E\lambda_s \d s\\
    \leq & \int_0^tL\E b(Y)\tilde \phi(t-s) \E \left| \tilde \lambda_s -\lambda_s\right| \d s+L \E b(Y)\left \|\tilde \phi-\phi\right \| \frac{\psi(0)+\lambda_\infty}{1-L\E b(Y)\|\phi\|_1}  \\
\end{align*}
and by applying Lemma \ref{lmm:ineg_de_base}
\begin{align*}
   \E \left[ \left |\tilde \lambda_t- \lambda_t \right|\right] \leq C\|\tilde \phi- \phi\|_1.
\end{align*}

Now, we move over to the second moment, we start again by using the Lipschitz property of $\psi$
    \begin{align*}
        \left |\tilde \lambda_t-\lambda_t \right |\leq & L\left |\int_0^{t-}\int_{\R_+\times \R}b(y)\left(\tilde\phi (t-s) \boldsymbol{1}_{\theta \leq \tilde \lambda_s}- \phi (t-s) \boldsymbol{1}_{\theta \leq \lambda_s} \right)\Pi (\d s, \d \theta ,\d y) \right|\\
        \leq &L\int_0^{t-}\int_{\R_+\times \R}b(y)\left |\left(\tilde\phi (t-s) \boldsymbol{1}_{\theta \leq \tilde \lambda_s}- \phi (t-s)\boldsymbol{1}_{\theta \leq \lambda_s} \right)\right|\Pi (\d s, \d \theta ,\d y) \\
        \leq &L\int_0^{t-}\int_{\R_+\times \R}b(y)\left |\left(\tilde\phi (t-s) \boldsymbol{1}_{\theta \leq \tilde \lambda_s}- \phi (t-s) \boldsymbol{1}_{\theta \leq \lambda_s} \right)  \right|\bar\Pi (\d s, \d \theta ,\d y)\\
        &+L\int_0^{t-}\int_{\R_+\times \R} b(y)|\tilde \phi(t-s)| \left | \boldsymbol{1}_{\theta \leq \tilde \lambda_s}- \boldsymbol{1}_{\theta \leq  \lambda_s}\right| \d s \d \theta m(\d y)\\
        &+L\int_0^{t-}\int_{\R_+\times \R} b(y)\left |\tilde \phi(t-s)-\phi(t-s)\right|\boldsymbol{1}_{\theta \leq \lambda_s}\d s \d \theta m(\d y)\\
        =& L\E b(Y) \int_0^{t}|\tilde \phi (t-s)| \left| \tilde \lambda_s-\lambda_s\right |\d s +M_t + B_t
    \end{align*}
    where 
     \begin{align*}
        M_t&:= L\left |\int_0^{t-}\int_{\R_+\times \R}b(y)\left(\tilde\phi (t-s)\boldsymbol{1}_{\theta \leq \tilde \lambda_s}- \phi (t-s)\boldsymbol{1}_{\theta \leq \lambda_s} \right)\bar\Pi (\d s, \d \theta ,\d y) \right|\\
        B_t&:=L \E b(Y)\int_0^{t-}\left |\tilde \phi(t-s)-\phi(t-s)\right|\lambda_s\d s.
    \end{align*}
    Using Lemma \ref{lmm:ineg_de_base}, we have that 
    $$\left | \tilde \lambda_t-\lambda_t \right|\leq M_t  +B_t +\int_0^{t}Q(t-s)\left(M_s+B_s\right) \d s,$$
    where $Q=\sum_{n\geq 1} \left(\E b(Y)L|\tilde \phi|\right)^{(n)}$ is such that $\|Q\|_1=\frac{L\E b(Y)\| \tilde\phi\|_1}{1-L\E b(Y) \|\tilde \phi\|_1}$.\\
     We set $f(t,s)=Lb(y)\left|\tilde\phi (t-s)\boldsymbol{1}_{\theta \leq \tilde \lambda_s}- \phi (t-s)\boldsymbol{1}_{\theta \leq \lambda_s}\right|$ such that 
    $M_t=\int_0^{t-}\int_{\R\times \R_+}f(t,s) \bar \Pi(\d s,\d \theta,\d y)$.
    By adding and subtracting the adequate terms we have that
    Using Ito's Isometry, we have that 
    \begin{align*}
        \E \left[M_tM_u\right]&=\E \left[\int_0^{\min(t,u)}\int_{\R_+\times \R}\left(f(t,s)f(u,s)\right) \d s\d \theta m(\d y)\right]
    \end{align*}
    where the product $f(t,s)f(u,s)$ is bounded from above by 
    \begin{align*}
        f(t,s)f(u,s) \leq & \left|\tilde\phi (t-s)\tilde\phi (u-s)\right|\left|\boldsymbol{1}_{\theta \leq \tilde \lambda_s}- \boldsymbol{1}_{\theta \leq \lambda_s} \right|+\left|\tilde \phi(t-s) -\phi(t-s)\right|\left| \tilde \phi (u-s)-\phi(u-s)\right| \boldsymbol{1}_{\theta \leq \lambda_s}\\
        &+\left|\tilde \phi(u-s)\right|\left|\boldsymbol{1}_{\theta \leq \tilde \lambda_s}- \boldsymbol{1}_{\theta \leq \lambda_s} \right|\left|\tilde \phi(t-s) -\phi(t-s)\right|+\left|\tilde \phi(t-s)\right|\left|\boldsymbol{1}_{\theta \leq \tilde \lambda_s}- \boldsymbol{1}_{\theta \leq \lambda_s} \right|\left|\tilde \phi(u-s) -\phi(u-s)\right|.
    \end{align*}
    Assuming that $u\leq t$ we have that 
    \begin{align*}
        \E [M_uM_t] \leq &C \int_0^u \left|\tilde\phi (t-s)\tilde\phi (u-s)\right| \E\left| \tilde \lambda_s-  \lambda_s \right| \d s +\int_0^u\left|\tilde \phi(t-s) -\phi(t-s)\right|\left| \tilde \phi (u-s)-\phi(u-s)\right|  \E\lambda_s \d s\\
        &+\int_0^u\left|\tilde \phi(u-s)\right|\E\left|\tilde \lambda_s-  \lambda_s \right|\left|\tilde \phi(t-s) -\phi(t-s)\right|\d s+\int_0^u\left|\tilde \phi(t-s)\right|\E\left| \tilde \lambda_s- \lambda_s \right|\left|\tilde \phi(u-s) -\phi(u-s)\right| \d s\\
        \leq&C \int_0^u \left|\tilde\phi (u-s)\right| \E\left| \tilde \lambda_s-  \lambda_s \right| \d s+\left(\int_0^t\left|\tilde \phi(t-s) -\phi(t-s)\right|^2 \d s\right)^{1/2}\left(\int_0^u\left| \tilde \phi (u-s)-\phi(u-s)\right|^2  \d s\right)^{1/2}\\
        &+\int_0^u\left|\tilde \phi(u-s)\right|\E\left|\tilde \lambda_s-  \lambda_s \right| \d s+\int_0^u\E\left| \tilde \lambda_s- \lambda_s \right|\left|\tilde \phi(u-s) -\phi(u-s)\right| \d s\\
    \end{align*}
    and given that $\E |\tilde \lambda_s-\lambda_s| \leq C\|\tilde \phi-\phi\|_1$ (the intensity does not depend on $X$) then 
    $$\E [M_uM_t]\leq C \left(\|\tilde \phi-\phi\|_1+\|\tilde \phi-\phi\|^2_1 +\|\tilde \phi-\phi\|_2^2 \right).$$
    For the term $B$, we have that 
    \begin{align*}
        \E \left[B_t B_u\right] \leq & C \int_0^t\int_0^u \left|\tilde \phi(s)-\phi(s) \right| \left|\tilde \phi(v)-\phi(v) \right|\E \left[\lambda_{t-s}\lambda_{u-v}\right]\d s \d v\\
        \leq & C \int_0^t\int_0^u \left|\tilde \phi(s)-\phi(s) \right| \left|\tilde \phi(v)-\phi(v) \right|\E \left[\lambda_{t-s}^2\right]^{1/2} \left[\lambda_{u-v}^2\right]^{1/2}\d s \d v
        \leq C\|\tilde \phi-\phi\|_1^2
    \end{align*}
    which yields 
    \begin{align*}
        \E \left[\left | \tilde \lambda_t-\lambda_t\right |^2\right] \leq & C \left(\E [M^2_t]+\E[B^2_t] + \int_0^t \int_0^s Q(t-s)Q(t-u) \left(\E \left[M_sM_u\right]+\E[B_s B_u]\right)\d u \d s\right)\\
        \leq &C \left(\|\tilde \phi-\phi\|_1+\|\tilde \phi-\phi\|^2_1 +\|\tilde \phi-\phi\|_2^2 \right) \left(1+\left(\int_0^t Q(t-s)\d s\right)^2\right)
    \end{align*}

    \subsection{Proof of Proposition \ref{prop:continuity_lambda_depend_X}}
    \label{sec:propcontinuity_lambda_depend_X}
        Using the Lipschitz property of $\psi$ we have that 
\begin{align*}
     \left |\tilde \lambda_t-\lambda_t \right |\leq & \left |\lambda_{\infty}(t,\tilde X_{t-})-\lambda_\infty(t,X_{t-}) \right |\\&+L\left |\int_0^{t-}\int_{\R_+\times \R}b(y)\left(\tilde\phi (t-s) \nu (s,\tilde X_{s-})\boldsymbol{1}_{\theta \leq \tilde \lambda_s}- \phi (t-s) \nu (s,X_{s-})\boldsymbol{1}_{\theta \leq \lambda_s} \right)\Pi (\d s, \d \theta ,\d y) \right|\\
        \leq & \left |\lambda_{\infty}(t,\tilde X_{t-})-\lambda_\infty(t,X_{t-}) \right |\\ &+L\int_0^{t-}\int_{\R_+\times \R}b(y)\left |\left(\tilde\phi (t-s) \nu (s,\tilde X_{s-})\boldsymbol{1}_{\theta \leq \tilde \lambda_s}- \phi (t-s) \nu (s,X_{s-})\boldsymbol{1}_{\theta \leq \lambda_s} \right)\right|\Pi (\d s, \d \theta ,\d y), \\
\end{align*}
which, by adding and subtracting the adequate terms and bounding $\nu$ from above by 1 yields 
\begin{align*}
    \left |\tilde \lambda_t- \lambda_t \right| \leq &L\int_0^{t-}\int_{\R_+\times \R} b(y)|\tilde \phi(t-s)| \left |\nu(s,\tilde X_{s-})-\nu(s, X_{s-}) \right|  \boldsymbol{1}_{\theta \leq \tilde \lambda_s} \Pi (\d s, \d \theta ,\d y)\\
    &+\left |\lambda_{\infty}(t,\tilde X_{t-})-\lambda_\infty(t,X_{t-}) \right |+L\int_0^{t-}\int_{\R_+\times \R} b(y)\left |\tilde \phi(t-s)-\phi(t-s)\right||\boldsymbol{1}_{\theta \leq \lambda_s}|\Pi (\d s, \d \theta ,\d y)\\
         &+L\int_0^{t-}\int_{\R_+\times \R} b(y) |\tilde \phi(t-s)|\left|\boldsymbol{1}_{\theta \leq \lambda_s}-\boldsymbol{1}_{\theta \leq \tilde \lambda_s } \right| \Pi (\d s, \d \theta ,\d y)\\
\end{align*}
and by taking the expected value we have 
\begin{align*}
    \E \left[ \left |\tilde \lambda_t- \lambda_t \right|\right] \leq &L\E b(Y)\E \left[\int_0^t|\tilde \phi(t-s)|\left | \nu(s,\tilde X_s)-\nu(s,X_s)\right| \lambda_s \d s \right] + \E \left |\lambda_{\infty}(t,\tilde X_{t-})-\lambda_\infty(t,X_{t-}) \right | \\
    &+L \E b(Y)\int_0^t\left |\tilde \phi(t-s)-\phi(t-s)\right| \E\lambda_s \d s+\int_0^tL\E b(Y)\tilde \phi(t-s) \E \left| \tilde \lambda_s -\lambda_s\right| \d s\\
    \leq &\left(L\E b(Y)|\tilde \phi|*\E[|\nu(\cdot,\tilde X)-\nu(\cdot,X)|\lambda]\right) +  \E \left |\lambda_{\infty}(t,\tilde X_{t-})-\lambda_\infty(t,X_{t-}) \right |\\
    &+L \E b(Y) \frac{\bar\lambda_{\infty}+\psi(0)}{1-L \E b(Y)\|\phi\|_1} \|\tilde \phi -\phi\|_1 +\int_0^tL\E b(Y)\tilde \phi(t-s) \E \left| \tilde \lambda_s -\lambda_s\right| \d s\\
\end{align*}
and by applying Lemma \ref{lmm:ineg_de_base}

\begin{align*}
   \E \left[ \left |\tilde \lambda_t- \lambda_t \right|\right] \leq &C\bigg(\|\tilde \phi- \phi\|_1+\left( Q*\E\left [ \left|\lambda_\infty(\cdot,\tilde X)-\lambda_\infty(\cdot,X)\right| \right] \right)_t\\&+\left(Q*(L\E b(Y)\tilde \phi)*\E \left[|\nu(\cdot,\tilde X)-\nu(\cdot,X)|\lambda\right]\right)_t\bigg)\\
   &+\E\left [ \left|\lambda_\infty(t,\tilde X_t)-\lambda_\infty(t,X_t)\right| \right]+\left((L\E b(Y)\tilde \phi)*\E \left[|\nu(\cdot,\tilde X)-\nu(\cdot,X)|\lambda\right]\right)_t.
\end{align*}

where $Q= \sum_{n\geq 1}(\E b(Y)L \tilde \phi)^{(n)}$. Hence, $Q*(L\E b(Y)\tilde \phi) = \sum_{n\geq 2}(\E b(Y)L \tilde \phi)^{(n)}$ is a positive integrable function and the result follows. \\

\textbf{Funding}\\
 Mahmoud Khabou acknowledges support from EPSRC NeST Programme grant EP/X002195/1.

 \textbf{Conflict of interest}\\
There were no competing interests to declare which arose during the
preparation or publication process of this article.

{\footnotesize{
\printbibliography}}

@article {graham,
    AUTHOR = {Graham, Carl},
     TITLE = {Mc{K}ean-{V}lasov {I}t\^o-{S}korohod equations, and nonlinear
              diffusions with discrete jump sets},
   JOURNAL = {Stochastic Process. Appl.},
  FJOURNAL = {Stochastic Processes and their Applications},
    VOLUME = {40},
      YEAR = {1992},
    NUMBER = {1},
     PAGES = {69--82},
      ISSN = {0304-4149,1879-209X},
   MRCLASS = {60G55 (60H99 60J75)},
  MRNUMBER = {1145460},
MRREVIEWER = {G.\ Papanicolaou},
       DOI = {10.1016/0304-4149(92)90138-G},
       URL = {https://doi.org/10.1016/0304-4149(92)90138-G},
}

@article {MR3054533,
    AUTHOR = {Bacry, E. and Delattre, S. and Hoffmann, M. and Muzy, J. F.},
     TITLE = {Some limit theorems for {H}awkes processes and application to
              financial statistics},
   JOURNAL = {Stochastic Process. Appl.},
  FJOURNAL = {Stochastic Processes and their Applications},
    VOLUME = {123},
      YEAR = {2013},
    NUMBER = {7},
     PAGES = {2475--2499},
      ISSN = {0304-4149,1879-209X},
   MRCLASS = {60F05 (60F17 60G55 62P05)},
  MRNUMBER = {3054533},
MRREVIEWER = {Pavle\ Mladenovi\'{c}},
       DOI = {10.1016/j.spa.2013.04.007},
       URL = {https://doi.org/10.1016/j.spa.2013.04.007},
}

@misc{coutin2024functionalapproximationmarkedhawkes,
      title={Functional approximation of the marked Hawkes risk process}, 
      author={Laure Coutin and Mahmoud Khabou},
      year={2024},
      eprint={2409.06276},
      archivePrefix={arXiv},
      primaryClass={math.PR},
      url={https://arxiv.org/abs/2409.06276}, 
}

@article {Kammler,
    AUTHOR = {Kammler, David W.},
     TITLE = {Approximation with sums of exponentials in {$L\sb{p}[0,\infty
              )$}},
   JOURNAL = {J. Approximation Theory},
  FJOURNAL = {Journal of Approximation Theory},
    VOLUME = {16},
      YEAR = {1976},
    NUMBER = {4},
     PAGES = {384--408},
      ISSN = {0021-9045,1096-0430},
   MRCLASS = {41A50},
  MRNUMBER = {425456},
MRREVIEWER = {E.\ W.\ Cheney},
       DOI = {10.1016/0021-9045(76)90071-x},
       URL = {https://doi.org/10.1016/0021-9045(76)90071-x},
}

@article{ekren2014viscosity,
author = {Ibrahim Ekren and Christian Keller and Nizar Touzi and Jianfeng Zhang},
title = {{On viscosity solutions of path dependent PDEs}},
volume = {42},
journal = {The Annals of Probability},
number = {1},
publisher = {Institute of Mathematical Statistics},
pages = {204 -- 236},
keywords = {Backward SDEs, Comparison principle, functional Itô formula, path dependent PDEs, viscosity solutions},
year = {2014},
doi = {10.1214/12-AOP788},
URL = {https://doi.org/10.1214/12-AOP788}
}

@article{hure2021deep,
  title={Deep neural networks algorithms for stochastic control problems on finite horizon: convergence analysis},
  author={Hur{\'e}, C{\^o}me and Pham, Huy{\^e}n and Bachouch, Achref and Langren{\'e}, Nicolas},
  journal={SIAM Journal on Numerical Analysis},
  volume={59},
  number={1},
  pages={525--557},
  year={2021},
  publisher={SIAM}
}

@article{ait2015portfolio,
  title={Portfolio choice in markets with contagion},
  author={Aït-Sahalia, Yacine and Hurd, Thomas Robert},
  journal={Journal of Financial Econometrics},
  volume={14},
  number={1},
  pages={1--28},
  year={2015},
  publisher={Oxford University Press}
}

@book {Applebaum,
    AUTHOR = {Applebaum, David},
     TITLE = {L\'evy processes and stochastic calculus},
    SERIES = {Cambridge Studies in Advanced Mathematics},
    VOLUME = {116},
   EDITION = {Second},
 PUBLISHER = {Cambridge University Press, Cambridge},
      YEAR = {2009},
     PAGES = {xxx+460},
      ISBN = {978-0-521-73865-1},
   MRCLASS = {60-02 (60G44 60G51 60H05 60H30 60J35 60J60 60J65)},
  MRNUMBER = {2512800},
MRREVIEWER = {Dora\ Sele\v si},
       DOI = {10.1017/CBO9780511809781},
       URL = {https://doi.org/10.1017/CBO9780511809781},
}

@book{oksendal2007applied,
  title={Applied stochastic control of jump diffusions},
  author={{\O}ksendal, Bernt and Sulem, Agnes},
  volume={3},
  year={2007},
  publisher={Springer}
}

@book{possamai2024optimal,
  title={Optimal control of Volterra-integral diffusions and application to contract theory},
  author={Possamaï, Dylan and Talbi, Mehdi},
  year={2024},
journal={Preprint},
URL = {https://mctalbi.github.io/Optimal%20control%20of%20Volterra%20integral%20diffusions.pdf}
}

@article{bensoussan2024stochastic,
  title={Stochastic control for diffusions with self-exciting jumps: An overview},
  author={Bensoussan, Alain and Chevalier-Roignant, Benoit},
  journal={Mathematical Control and Related Fields},
  pages={0--0},
  year={2024},
  publisher={Mathematical Control and Related Fields}
}

@article {Beylkin,
    AUTHOR = {Beylkin, Gregory and Monz\'on, Lucas},
     TITLE = {On approximation of functions by exponential sums},
   JOURNAL = {Appl. Comput. Harmon. Anal.},
  FJOURNAL = {Applied and Computational Harmonic Analysis. Time-Frequency
              and Time-Scale Analysis, Wavelets, Numerical Algorithms, and
              Applications},
    VOLUME = {19},
      YEAR = {2005},
    NUMBER = {1},
     PAGES = {17--48},
      ISSN = {1063-5203,1096-603X},
   MRCLASS = {65D10 (41A10)},
  MRNUMBER = {2147060},
MRREVIEWER = {Tomas\ Sauer},
       DOI = {10.1016/j.acha.2005.01.003},
       URL = {https://doi.org/10.1016/j.acha.2005.01.003},
}

@article{carmona1998fractional,
  title={Fractional Brownian motion and the Markov property},
  author={Carmona, Philippe and Coutin, Laure},
  journal={Electronic Communications in Probability},
  volume={3},
  pages={ 95-107},
  year={1998}
}

@article{alfonsi2024approximation,
  title={Approximation of Stochastic Volterra Equations with kernels of completely monotone type},
  author={Alfonsi, Aur{\'e}lien and Kebaier, Ahmed},
  journal={Mathematics of Computation},
  volume={93},
  number={346},
  pages={643--677},
  year={2024}
}

@article{bayer2023markovian,
  title={Markovian approximations of stochastic Volterra equations with the fractional kernel},
  author={Bayer, Christian and Breneis, Simon},
  journal={Quantitative Finance},
  volume={23},
  number={1},
  pages={53--70},
  year={2023},
  publisher={Taylor \& Francis}
}

@article {AK,
    AUTHOR = {Ang, D. D. and Knopoff, L.},
     TITLE = {A note on {$L\sb{1}$}-approximations by exponential
              polynomials and {L}aguerre exponential polynomials},
   JOURNAL = {J. Approximation Theory},
  FJOURNAL = {Journal of Approximation Theory},
    VOLUME = {6},
      YEAR = {1972},
     PAGES = {272--275},
      ISSN = {0021-9045,1096-0430},
   MRCLASS = {41A30},
  MRNUMBER = {348344},
MRREVIEWER = {Ambikeswar\ Sharma},
       DOI = {10.1016/0021-9045(72)90059-7},
       URL = {https://doi.org/10.1016/0021-9045(72)90059-7},
}

@article {bally,
    AUTHOR = {Bally, Vlad and Goreac, Dan and Rabiet, Victor},
     TITLE = {Regularity and stability for the semigroup of jump diffusions
              with state-dependent intensity},
   JOURNAL = {Ann. Appl. Probab.},
  FJOURNAL = {The Annals of Applied Probability},
    VOLUME = {28},
      YEAR = {2018},
    NUMBER = {5},
     PAGES = {3028--3074},
      ISSN = {1050-5164,2168-8737},
   MRCLASS = {60J75 (35Q20 35R60 47D07)},
  MRNUMBER = {3847980},
       DOI = {10.1214/18-AAP1382},
       URL = {https://doi.org/10.1214/18-AAP1382},
}

@article {galves,
    AUTHOR = {Galves, Antonio and L\"ocherbach, Eva},
     TITLE = {Modeling networks of spiking neurons as interacting processes
              with memory of variable length},
   JOURNAL = {J. SFdS},
  FJOURNAL = {Journal de la SFdS. Journal de la Soc\'iet\'e{} Fran\c caise
              de Statistique},
    VOLUME = {157},
      YEAR = {2016},
    NUMBER = {1},
     PAGES = {17--32},
      ISSN = {2102-6238},
   MRCLASS = {60K35 (60-02 60G55 60J25 92B20)},
  MRNUMBER = {3491721},
}

@article{MERTON1976125,
title = {Option pricing when underlying stock returns are discontinuous},
journal = {Journal of Financial Economics},
volume = {3},
number = {1},
pages = {125-144},
year = {1976},
issn = {0304-405X},
doi = {https://doi.org/10.1016/0304-405X(76)90022-2},
url = {https://www.sciencedirect.com/science/article/pii/0304405X76900222},
author = {Robert C. Merton},
abstract = {The validity of the classic Black-Scholes option pricing formula depends on the capability of investors to follow a dynamic portfolio strategy in the stock that replicates the payoff structure to the option. The critical assumption required for such a strategy to be feasible, is that the underlying stock return dynamics can be described by a stochastic process with a continuous sample path. In this paper, an option pricing formula is derived for the more-general case when the underlying stock returns are generated by a mixture of both continuous and jump processes. The derived formula has most of the attractive features of the original Black-Scholes formula in that it does not depend on investor preferences or knowledge of the expected return on the underlying stock. Moreover, the same analysis applied to the options can be extended to the pricing of corporate liabilities.}
}

@article {gene,
    AUTHOR = {Crudu, A. and Debussche, A. and Muller, A. and Radulescu, O.},
     TITLE = {Convergence of stochastic gene networks to hybrid piecewise
              deterministic processes},
   JOURNAL = {Ann. Appl. Probab.},
  FJOURNAL = {The Annals of Applied Probability},
    VOLUME = {22},
      YEAR = {2012},
    NUMBER = {5},
     PAGES = {1822--1859},
      ISSN = {1050-5164,2168-8737},
   MRCLASS = {60J25 (60J27 92C42 92D10)},
  MRNUMBER = {3025682},
MRREVIEWER = {Jan\ M.\ Swart},
       DOI = {10.1214/11-AAP814},
       URL = {https://doi.org/10.1214/11-AAP814},
}

@article {chemical,
    AUTHOR = {Gillespie, Daniel T.},
     TITLE = {A general method for numerically simulating the stochastic
              time evolution of coupled chemical reactions},
   JOURNAL = {J. Comput. Phys.},
  FJOURNAL = {Journal of Computational Physics},
    VOLUME = {22},
      YEAR = {1976},
    NUMBER = {4},
     PAGES = {403--434},
      ISSN = {0021-9991,1090-2716},
   MRCLASS = {82.65},
  MRNUMBER = {503370},
MRREVIEWER = {P.\ Nelson},
       DOI = {10.1016/0021-9991(76)90041-3},
       URL = {https://doi.org/10.1016/0021-9991(76)90041-3},
}

@article {bressloff,
    AUTHOR = {Bressloff, Paul C. and Newby, Jay M.},
     TITLE = {Metastability in a stochastic neural network modeled as a
              velocity jump {M}arkov process},
   JOURNAL = {SIAM J. Appl. Dyn. Syst.},
  FJOURNAL = {SIAM Journal on Applied Dynamical Systems},
    VOLUME = {12},
      YEAR = {2013},
    NUMBER = {3},
     PAGES = {1394--1435},
      ISSN = {1536-0040},
   MRCLASS = {92C20},
  MRNUMBER = {3085109},
MRREVIEWER = {Ren\ Guo},
       DOI = {10.1137/120898978},
       URL = {https://doi.org/10.1137/120898978},
}

@article {benazzoli,
    AUTHOR = {Benazzoli, Chiara and Campi, Luciano and Di Persio, Luca},
     TITLE = {Mean field games with controlled jump-diffusion dynamics:
              existence results and an illiquid interbank market model},
   JOURNAL = {Stochastic Process. Appl.},
  FJOURNAL = {Stochastic Processes and their Applications},
    VOLUME = {130},
      YEAR = {2020},
    NUMBER = {11},
     PAGES = {6927--6964},
      ISSN = {0304-4149,1879-209X},
   MRCLASS = {91A16 (49N80 60H10 91G15)},
  MRNUMBER = {4158808},
       DOI = {10.1016/j.spa.2020.07.004},
       URL = {https://doi.org/10.1016/j.spa.2020.07.004},
}

@article{rajabi2025optimal,
  title={Optimal control for stochastic neural oscillators},
  author={Rajabi, Faranak and Gibou, Frederic and Moehlis, Jeff},
  journal={Biological Cybernetics},
  volume={119},
  number={2},
  pages={1--16},
  year={2025},
  publisher={Springer}
}

@article{harrison2005stochastic,
  title={Stochastic models of neuronal dynamics},
  author={Harrison, LM and David, O and Friston, KJ},
  journal={Philosophical Transactions of the Royal Society B: Biological Sciences},
  volume={360},
  number={1457},
  pages={1075--1091},
  year={2005},
  publisher={The Royal Society London}
}

@article {Costa2024.10.02.616330,
	author = {Costa, Tiago and Casti{\~n}eiras de Saa, Juan R. and Renart, Alfonso},
	title = {Optimal Control of Spiking Neural Networks},
	elocation-id = {2024.10.02.616330},
	year = {2024},
	doi = {10.1101/2024.10.02.616330},
	publisher = {Cold Spring Harbor Laboratory},
	abstract = {Control theory provides a natural language to describe multi-areal interactions and flexible cognitive tasks such as covert attention or brain-machine interface (BMI) experiments, which require finding adequate inputs to a local circuit in order to steer its dynamics in a context-dependent manner. In optimal control, the target dynamics should maximize a notion of long-term value along trajectories, possibly subject to control costs. Because this problem is, in general, not tractable, current approaches to the control of networks mostly consider simplified settings (e.g., variations of the Linear-Quadratic Regulator). Here, we present a mathematical framework for optimal control of recurrent networks of stochastic spiking neurons with low-rank connectivity. An essential ingredient is a control-cost that penalizes deviations from the default dynamics of the network (specified by its recurrent connections), which motivates the controller to use the default dynamics as much as possible. We derive a Bellman Equation that specifies a Value function over the low-dimensional network state (LDS), and a corresponding optimal control input. The optimal control law takes the form of a feedback controller that provides external excitatory (inhibitory) synaptic input to neurons in the recurrent network if their spiking activity tends to move the LDS towards regions of higher (lower) Value. We use our theory to study the problem of steering the state of the network towards particular terminal regions which can lie either in or out of regions in the LDS with slow dynamics, in analogy to standard BMI experiments. Our results provide the foundation of a novel approach with broad applicability that unifies bottom-up and top-down perspectives on neural computation.Competing Interest StatementThe authors have declared no competing interest.},
	URL = {https://www.biorxiv.org/content/early/2024/10/03/2024.10.02.616330},
	eprint = {https://www.biorxiv.org/content/early/2024/10/03/2024.10.02.616330.full.pdf},
	journal = {bioRxiv}
}

@article {dewitte,
    AUTHOR = {de Witte, Dominik and Mai, Jan-Frederik and Scherer, Matthias},
     TITLE = {Portfolio optimization in a multivariate jump-diffusion model},
   JOURNAL = {Front. Math. Finance},
  FJOURNAL = {Frontiers of Mathematical Finance},
    VOLUME = {5},
      YEAR = {2025},
     PAGES = {39--72},
      ISSN = {2769-6715},
   MRCLASS = {91G10 (60J76 90C90 91B70 91G60)},
  MRNUMBER = {4897322},
       DOI = {10.3934/fmf.2025004},
       URL = {https://doi.org/10.3934/fmf.2025004},
}

@article {hawkes,
    AUTHOR = {Hawkes, Alan G.},
     TITLE = {Spectra of some self-exciting and mutually exciting point
              processes},
   JOURNAL = {Biometrika},
  FJOURNAL = {Biometrika},
    VOLUME = {58},
      YEAR = {1971},
     PAGES = {83--90},
      ISSN = {0006-3444,1464-3510},
   MRCLASS = {60.69},
  MRNUMBER = {278410},
MRREVIEWER = {M.\ R.\ Leadbetter},
       DOI = {10.1093/biomet/58.1.83},
       URL = {https://doi.org/10.1093/biomet/58.1.83},
}

@article {bm,
    AUTHOR = {Br\'emaud, Pierre and Massouli\'e, Laurent},
     TITLE = {Stability of nonlinear {H}awkes processes},
   JOURNAL = {Ann. Probab.},
  FJOURNAL = {The Annals of Probability},
    VOLUME = {24},
      YEAR = {1996},
    NUMBER = {3},
     PAGES = {1563--1588},
      ISSN = {0091-1798,2168-894X},
   MRCLASS = {60G55},
  MRNUMBER = {1411506},
MRREVIEWER = {F.\ Papangelou},
       DOI = {10.1214/aop/1065725193},
       URL = {https://doi.org/10.1214/aop/1065725193},
}

@misc{callegaro2025stochasticgordonloebmodeloptimal,
      title={A stochastic Gordon-Loeb model for optimal cybersecurity investment under clustered attacks}, 
      author={Giorgia Callegaro and Claudio Fontana and Caroline Hillairet and Beatrice Ongarato},
      year={2025},
      eprint={2505.01221},
      archivePrefix={arXiv},
      primaryClass={q-fin.RM},
      url={https://arxiv.org/abs/2505.01221}, 
}

@article {mazzoran,
    AUTHOR = {Callegaro, Giorgia and Mazzoran, Andrea and Sgarra, Carlo},
     TITLE = {A self-exciting modeling framework for forward prices in power
              markets},
   JOURNAL = {Appl. Stoch. Models Bus. Ind.},
  FJOURNAL = {Applied Stochastic Models in Business and Industry},
    VOLUME = {38},
      YEAR = {2022},
    NUMBER = {1},
     PAGES = {27--48},
      ISSN = {1524-1904,1526-4025},
   MRCLASS = {99-01},
  MRNUMBER = {4382379},
       DOI = {10.1002/asmb.2645},
       URL = {https://doi.org/10.1002/asmb.2645},
}

@article{bielajew1987effect,
  title={The effect of pulse duration on refractory periods of neurons mediating brain-stimulation reward},
  author={Bielajew, Catherine and Jurgens, Sabine and Fouriezos, George},
  journal={Behavioural brain research},
  volume={24},
  number={3},
  pages={233--241},
  year={1987},
  publisher={Elsevier}
}

@article{Bessy-Roland_Boumezoued_Hillairet_2021, title={Multivariate Hawkes process for cyber insurance}, volume={15}, DOI={10.1017/S1748499520000093}, number={1}, journal={Annals of Actuarial Science}, author={Bessy-Roland, Yannick and Boumezoued, Alexandre and Hillairet, Caroline}, year={2021}, pages={14–39}}

@article{bergault2024price,
  title={Price-aware automated market makers: Models beyond {B}rownian prices and static liquidity},
  author={Bergault, Philippe and Bertucci, Louis and Bouba, David and Gu{\'e}ant, Olivier and Guilbert, Julien},
  journal={Preprint arXiv:2405.03496},
  year={2024}
}

@article{jusselin2021optimal,
  title={Optimal market making with persistent order flow},
  author={Jusselin, Paul},
  journal={SIAM Journal on Financial Mathematics},
  volume={12},
  number={3},
  pages={1150--1200},
  year={2021},
  publisher={SIAM}
}

@article {DLO,
    AUTHOR = {Duarte, Aline and L\"ocherbach, Eva and Ost, Guilherme},
     TITLE = {Stability, convergence to equilibrium and simulation of
              non-linear {H}awkes processes with memory kernels given by the
              sum of {E}rlang kernels},
   JOURNAL = {ESAIM Probab. Stat.},
  FJOURNAL = {ESAIM. Probability and Statistics},
    VOLUME = {23},
      YEAR = {2019},
     PAGES = {770--796},
      ISSN = {1292-8100,1262-3318},
   MRCLASS = {60G55 (60J74 60K10)},
  MRNUMBER = {4044609},
MRREVIEWER = {Nelson\ Gomes\ Antunes},
       DOI = {10.1051/ps/2019005},
       URL = {https://doi.org/10.1051/ps/2019005},
}

@article{agram2015malliavin,
  title={Malliavin calculus and optimal control of stochastic {V}olterra equations},
  author={Agram, Nacira and Oksendal, Bernt},
  journal={Journal of Optimization Theory and Applications},
  volume={167},
  pages={1070--1094},
  year={2015},
  publisher={Springer}
}

@article{hamaguchi2023maximum,
  title={On the maximum principle for optimal control problems of stochastic {V}olterra integral equations with delay},
  author={Hamaguchi, Yushi},
  journal={Applied Mathematics \& Optimization},
  volume={87},
  number={3},
  pages={42},
  year={2023},
  publisher={Springer}
}

@article{lin2020controlled,
  title={Controlled singular {V}olterra integral equations and {P}ontryagin maximum principle},
  author={Lin, Ping and Yong, Jiongmin},
  journal={SIAM Journal on Control and Optimization},
  volume={58},
  number={1},
  pages={136--164},
  year={2020},
  publisher={SIAM}
}

@article{saeedian2017memory,
	author = {Saeedian, M. and Khalighi, M. and Azimi-Tafreshi, N. and Jafari, G.R. and Ausloos, M.},
	journal = {Physical Review E},
	number = {2},
	pages = {022409},
	publisher = {APS},
	title = {Memory effects on epidemic evolution: The susceptible-infected-recovered epidemic model},
	volume = {95},
	year = {2017}}

@article{schmiegel2006self,
	author = {Schmiegel, J.},
	journal = {Physica A: Statistical Mechanics and its Applications},
	pages = {509--524},
	publisher = {Elsevier},
	title = {Self-scaling tumor growth},
	volume = {367},
	year = {2006}}

@article{cardenas2022existence,
	author = {C{\'a}rdenas, A. and Pulido, S. and Serrano, R.},
	journal = {ArXiv preprint arXiv:2207.05169},
	title = {Existence of optimal controls for stochastic {V}olterra equations},
	year = {2022}}

@article{wang2022path,
	author = {Wang, H. and Yong, J. and Zhang, J.},
	date-modified = {2024-05-16 13:18:04 +0200},
	journal = {Annales de l'institut Henri Poincar{\'e}, Probabilit{\'e}s et Statistiques $({\mathrm{B}})$},
	number = {2},
	pages = {603--638},
	title = {Path dependent {F}eynman--{K}ac formula for forward backward stochastic {V}olterra integral equations},
	volume = {58},
	year = {2022}}

@article{wang2023linear,
	author = {Wang, H. and Yong, J. and Zhou, C.},
	journal = {SIAM Journal on Control and Optimization},
	number = {4},
	pages = {2595--2629},
	publisher = {SIAM},
	title = {Linear--quadratic optimal controls for stochastic {V}olterra integral equations: causal state feedback and path-dependent {R}iccati equations},
	volume = {61},
	year = {2023}}

@article{abi2021linear,
	author = {Abi Jaber, E. and Miller, E. and Pham, H.},
	journal = {The Annals of Applied Probability},
	number = {5},
	pages = {2244--2274},
	publisher = {Institute of Mathematical Statistics},
	title = {Linear--quadratic control for a class of stochastic {V}olterra equations: solvability and approximation},
	volume = {31},
	year = {2021}}

@article{di2023lifting,
	author = {di Nunno, G. and Giordano, M.},
	journal = {ArXiv preprint arXiv:2306.14175},
	title = {Lifting of {V}olterra processes: optimal control in {UMD} {B}anach spaces},
	year = {2023}}

@article{hamaguchi2023markovian,
	author = {Hamaguchi, Y.},
	journal = {ArXiv preprint arXiv:2304.06683},
	title = {Markovian lifting and asymptotic log-{H}arnack inequality for stochastic {V}olterra integral equations},
	year = {2023}}

@article{bayer2016pricing,
	author = {Bayer, C. and Friz, P. and Gatheral, J.},
	date-added = {2021-10-20 15:29:23 +0200},
	date-modified = {2021-10-20 15:39:35 +0200},
	journal = {Quantitative Finance},
	number = {6},
	pages = {887--904},
	publisher = {Taylor \& Francis},
	title = {Pricing under rough volatility},
	volume = {16},
	year = {2016}}

@article{gatheral2018volatility,
	author = {Gatheral, J. and Jaisson, T. and Rosenbaum, M.},
	date-added = {2021-10-20 15:28:52 +0200},
	date-modified = {2021-10-20 15:52:52 +0200},
	journal = {Quantitative Finance},
	number = {6},
	pages = {933--949},
	publisher = {Taylor \& Francis},
	title = {Volatility is rough},
	volume = {18},
	year = {2018}}

@article{agram2018new,
	author = {Agram, N. and Oksendal, B. and Yakhlef, S.},
	journal = {Stochastics: An International Journal of Probability and Stochastic Processes},
	number = {6},
	pages = {873--894},
	title = {New approach to optimal control of stochastic {V}olterra integral equations},
	volume = {91},
	year = {2019}}

@article{viens2019martingale,
	author = {Viens, F. and Zhang, J.},
	date-added = {2019-07-10 14:12:30 -0400},
	date-modified = {2020-10-14 11:03:00 +0200},
	journal = {The Annals of Applied Probability},
	number = {6},
	pages = {3489--3540},
	title = {A martingale approach for fractional {B}rownian motions and related path dependent {PDE}s},
	volume = {29},
	year = {2019}}

\end{document}